\newtheorem{thm}{Theorem}[section]
\newtheorem*{thm*}{Theorem}
\newtheorem*{lem*}{Lemma}
\newtheorem{cor}[thm]{Corollary}
\newtheorem{conj}{Conjecture}[section]
\theoremstyle{definition} 
\theoremstyle{definition} 
\theoremstyle{definition} \newtheorem{defn}[thm]{Definition}
\theoremstyle{definition} \newtheorem{rem}[thm]{Remark}
\theoremstyle{definition} 
\theoremstyle{definition} \newtheorem{ex}[thm]{Example}
\newtheorem{PAP}{Pattern Avoidance Property}
\newtheorem*{finalfirstPAP}{Pattern Avoidance Property 10-1}
\newtheorem*{finalsecondPAP}{Pattern Avoidance Property 10-2}
\newtheorem*{finalthirdPAP}{Pattern Avoidance Property 10-3}
\newtheorem{ques}{Question}
\DeclareMathOperator{\fl}{fl}
\newcommand{\Z}{\mathbb{Z}}
\newcommand{\ci}{\bullet}
\newcommand{\flags}{\mathcal{F}l_n}
\newcommand{\enci}{\put(3,3){\circle{10}}}
\newcommand{\given}{\,\, | \,\,}
\newcommand{\glB}{\tilde{B}}
\newcommand{\gn}{ \bullet }
\newcommand{\abechs}[2]{ \binom{#1}{#2} }
\newcommand{\mfS}{ S }
\newcommand{\sectionspace}{\vspace{0.2in}}
\definecolor{541}{rgb}{1,0.8,0.2}
\definecolor{530}{rgb}{1,0.6,0.0}
\definecolor{light-blue}{rgb}{0.6,0.65,1}
\definecolor{204}{rgb}{0.4,0.0,0.8}
\definecolor{purple}{rgb}{0.4,0.0,0.8}
\definecolor{035}{rgb}{0,0.6,1}
\definecolor{005}{rgb}{0,0.0,1}
\definecolor{red}{rgb}{1,0,0}
\begin{document}
\title[Consequences of the Lakshmibai-Sandhya Theorem]{Consequences of the Lakshmibai-Sandhya Theorem: \\ the ubiquity of permutation patterns \\ in Schubert calculus and related
  geometry}

\author[H. Abe]{Hiraku Abe} %
\address{Department of Mathematics, Faculty of Science and Engineering, Waseda University, 3-4-1 Okubo, Shinjuku, Tokyo 169-8555 JAPAN}
\email{hirakuabe@globe.ocn.ne.jp} 

\author[S. Billey]{Sara Billey} %
\address{Department of Mathematics, Padelford C-445, University of
  Washington, Box 354350, Seattle, WA 98195-4350 }
\email{billey@math.washington.edu}
\thanks{The second author was partially supported by grant DMS-1101017
  from the NSF}  

\date{\today}

\begin{abstract}
  In 1990, Lakshmibai and Sandhya published a characterization of
  singular Schubert varieties in flag manifolds using the notion of
  pattern avoidance.  This was the first time pattern avoidance was
  used to characterize geometrical properties of Schubert varieties.
  Their results are very closely related to work of Haiman, Ryan and
  Wolper, but Lakshmibai-Sandhya were the first to use that language
  exactly.  Pattern avoidance in permutations was used historically by
  Knuth, Pratt, Tarjan, and others in the 1960's and 1970's to
  characterize sorting algorithms in computer science.  Lascoux and
  Sch\"utzenberger also used pattern avoidance to characterize
  vexillary permutations in the 1980's.  Now, there are many
  geometrical properties of Schubert varieties that use pattern
  avoidance as a method for characterization including Gorenstein,
  factorial, local complete intersections, and properties of
  Kazhdan-Lusztig polynomials.  These are what we call
  consequences of the Lakshmibai-Sandhya theorem.  We survey the many
  beautiful results, generalizations, and remaining open problems in
  this area.  We highlight the advantages of using pattern avoidance
  characterizations in terms of linear time algorithms and the ease
  of access to the literature via Tenner's Database of Permutation
  Pattern Avoidance.   This survey is based on lectures by the second author at
  Osaka, Japan 2012 for the Summer School of the Mathematical Society
  of Japan based on the topic of Schubert calculus.
\end{abstract}

\maketitle

\sectionspace
\section{Introduction}

Modern Schubert calculus is the study of effective methods to compute
the expansion coefficients for the cup product of cohomology classes of Schubert
varieties:
\[
[X_u] \cdot [X_v]  = \sum  c_{u,v}^{w} [X_w].
\]
These coefficients $c_{u,v}^{w}$ are called structure constants with
respect to the Schubert classes $[X_w]$, and it is known that the
structure constants are non-negative integers. In fact, each
$c_{u,v}^{w}$ is the \textit{intersection number} of three Schubert
varieties $X_u, X_v$ and $X_{w_{0} w}$; they count the number of points
of the intersection of those three varieties placed in generic
positions. Observe that this is both a combinatorial and a geometrical
statement.

For Schubert varieties in Grassmannians, we already have many tools
for computing the structure constants for the cup product:
\textit{Littlewood-Richardson tableaux}, \textit{Yamanouchi words},
\textit{Knutson-Tao puzzles}, \textit{Vakil's toric degenerations}.
In general, we have not found analogs of all these beautiful tools for other types of Schubert varieties. We need to understand both the combinatorics and geometry of Schubert varieties in order to do Schubert calculus for all types of Schubert varieties.

In this article, we will focus on the combinatorics and geometry
related to the tangent spaces of Schubert varieties and
characterizations of smoothness and rational smoothness.  The
mathematical tools we will use also arise in Schubert calculus, but we
will not make the connections explicit.  For the record, the most
explicit connection between characterizations of smoothness and
Schubert calculus come from Kumar's criterion and the Kostant
polynomials.  See \cite{BLak,kumar,tymoczko.2013} for more details.  

We begin with a review of Schubert varieties in flag manifolds.  Then
we will present the celebrated Lakshmibai-Sandhya Theorem
characterizing smooth Schubert varieties using permutation pattern
avoidance.  We will give a total of 10 properties of Schubert
varieties in flag manifolds that are completely characterized by
pattern avoidance or a variation on that theme.  We describe a method
for extending permutation pattern avoidance to all Coxeter groups and
discuss some geometrical properties characterized by Coxeter pattern
avoidance more generally.  We give pointers to some useful
computational tools for studying Schubert geometry and beyond.
Finally, we present many open problems in this area.

We want to highlight the fact that there are computational advantages
of using permutation patterns to characterize interesting properties
such as smoothness of Schubert varieties.  Naively, avoiding a finite
set of patterns of length at most $k$ leads to a polynomial time
algorithm of $O(n^k)$ by brute force testing of all $k$-subsets.  As
$k$ and $n$ get large, such algorithm is intractable.  In fact,
deciding if one permutation is contained in another is an NP-complete
problem \cite{Bose.Buss.Lubiw}.  Remarkably, Guillemot and Marx
\cite{Guillemot.Marx} recently showed that for every permutation $v
\in S_k$ there exists an algorithm to test if $w \in S_n$ contains $v$
which runs in \textit{linear time}, $O(n)$!  This is a major
improvement over brute force verification.  It is often far from
obvious that an $O(n)$ time algorithm exists for the geometric or
algebraic properties characterized by pattern avoidance in this paper.

Another major advantage of permutation pattern characterizations is
that they provide efficient fingerprints for theorems
\cite{billey.tenner}.  Tenner's Database of Permutation Pattern
Avoidance (DPPA) provides a growing collection of known properties
characterized by patterns with references to the literature
\cite{dppa}.  This allows researchers to connect new theorems and
conjectures with known results in a format free of language or
notational differences.

\sectionspace
\section{Preliminaries}\label{prelim}
\subsection{The Flag Manifold}\label{sub:flags}
\begin{defn}
A \textit{complete flag} $F_{\ci}= (F_{1},\dots , F_{n})$ in $\mathbb{C}^{n}$ is a nested sequence of vector spaces such that $\mathrm{dim} (F_{i})=i$ for $1\leq i \leq n$.  A flag $F_\ci$ is determined by an ordered basis $\langle f_{1},f_{2},\dots, f_{n } \rangle $ where $F_{i} = \mathrm{span}\langle f_{1},\dots , f_{i} \rangle$.
\end{defn}

Let $e_1,e_2,\ldots, e_n$ be the standard basis for $\mathbb{C}^{n}$.
The \textit{base flag} is $E_{\ci}= (E_1,E_2,\ldots, E_n)$ where $E_i =
\langle e_{1},e_{2}, \ldots , e_{i} \rangle$.  Let $F_\ci $ be any
flag given by the ordered basis $ \langle f_{1},f_{2},\dots ,f_{n }
\rangle$.  Writing each basis element $f_i$ as a column vector in
terms of the $e_i$'s, we obtain an $n\times n$-non-singular matrix
whose column vectors are the basis $f_1,\cdots,f_n$. In this
presentation, we can multiply the matrix by a non-zero scalar or we
can add the $i$-th column to the $j$-th column where $i<j$ and it
still represents the same flag. So, a flag can always be presented by
a matrix in \textit{canonical form}; the lowest non-zero entry of each
column is 1, and the entries to its right are all zeros.
\begin{ex}The following two matrices represent the same flag $F_{\ci} =   \langle 2 e_{1}+ e_{2},\hspace{.1in} 2 e_{1} +e_{3},\hspace{.1in} 7e_{1}+e_{4},\hspace{.1in} e_{1}  \rangle$:
\begin{align*}
\left[\begin{array}{cccc}
6 &	4 &	 9 &	 0 \\
3 &	0 &	0 &	1\\
0 &    2 &     1 &	0\\
0 &    0 &      1 &      0
\end{array} \right]
  \sim
\left[\begin{array}{cccc}
2 &	2 &	 7 &	 1 \\
1 &	0 &	0 &	0\\
0 &   1 &     0 &	0\\
0 & 0 &	 1 &	0
\end{array} \right].
\end{align*}
The right hand side is the canonical form.
\end{ex}

It also follows that two non-singular matrices represent the same flag
if and only if one is the other multiplied by an upper triangular
matrix.  That is, we have an identification $\flags(\mathbb{C}) =
GL_{n}(\mathbb{C}) / \glB$ where $\glB\subset GL_{n}(\mathbb{C})$ is
the set of invertible upper triangular matrices.  Similarly, we can
rescale any invertible matrix by the inverse of its determinant and
get another matrix representing the same flag.  Hence, letting $B$ be
the set of upper triangular matrices in $SL_n(\mathbb{C})$, we see
that
\begin{align*}
\flags(\mathbb{C}) =  GL_{n}(\mathbb{C}) / \glB = SL_{n}(\mathbb{C}) / B.
\end{align*}

\subsection{Flags and Permutations}
If a flag is written in canonical form, the leading 1's form a
permutation matrix. This matrix is called \textit{the position} of the
flag $F_{\ci}$ with respect to the base flag $E_{\ci} $, and is denoted by
$\text{position}(E_{\ci},F_{\ci})$.
\begin{ex} 
\begin{align*}
F_{\ci} =   \langle 2 e_{1}+ e_{2},\hspace{.1in} 2 e_{1} +e_{3},\hspace{.1in} 7e_{1}+e_{4},\hspace{.1in} e_{1}  \rangle \approx
\left[\begin{array}{cccc}
2 &	2 &	 7 &	 \enci 1 \\
\enci 1&0 &	0 &	0\\
0 &     \enci 1 &     0 &	0\\
0 &     0 &	 \enci 1 &	0
\end{array} \right]
\end{align*}
\end{ex}

Note that there are many ways to represent a permutation; as a
bijection from $[n]:=\{1,2,\ldots,n\}$ to itself, matrix notation,
two-line notation, one-line notation, rank table, diagram, string
diagram, reduced word etc.  Each of these representations is useful in
some way or another for the study of Schubert varieties so we advise
the reader to become comfortable with all of them simultaneously and
choose the right one for the proof at hand.  Note, we have not found
much use for cycle notation for permutations in this context so we
will not ever use that notation here.   

To be precise, we use the following notation: for a permutation
$w\colon [n]\rightarrow[n]$ in the symmetric group $S_n$, we denote by the
same symbol $w=w_1w_2\ldots w_n$ the permutation matrix which has 1's
in the $(w_j,j)$-th entries for $1\leq j\leq n$ and 0's
elsewhere. Permutation multiplication is consistent with matrix
multiplication using this notation.  In particular, if $t_{ij}$ is the
transposition interchanging $i$ and $j$, then the one-line notation for
$wt_{ij}$ agrees with $w$ in all positions except $i$ and $j$ where
the entries are switched.  The permutation $t_{ij}w$ has the values
$i$ and $j$ switched.

The \textit{rank table} $rk(w)$ is obtained from the matrix $w$ by
setting 
$$
rk(w)[i,j] = \# \{h \in [j] \colon w(h) \in [i]\},
$$
 i.e.  the rank of the submatrix of $w$ with lower
right corner $[i,j]$ and upper left corner $[1,1]$.  

A \textit{string diagram} of a permutation for $w$ is a braid with the
strings proceeding from the initial ordering to the permuted order
given by $w=w_1w_2\ldots w_n$ in such a way that no three strings
cross at any point.  A wiring diagram is a string diagram with exactly
one crossing on each row.  A wiring diagram in which no two strings
cross twice is said to be \textit{reduced}.  Starting at the top of a
reduced wiring diagram, one can read off the index of the first string
in each crossing to obtain a corresponding \textit{reduced word}.  All
reduced words for $w$ have the same length, denoted $\ell(w)$.
Furthermore, the length of $w$ is the number of \textit{inversions}
for $w$, $\ell(w)=\# \{w(i)>w(j)\colon i<j \}$.    

The \textit{diagram of a permutation} $w$ is obtained from the matrix
of $w^{-1}$ by removing all cells in an $n\times n$ array which are
weakly to the right or below a 1 in $w^{-1}$.  The remaining cells
form the diagram $D(w)$.  The cells of $D(w)$ are in bijection with
the inversions of $w$.  One can recover $w$ either from its diagram or
its inversion set.  It is unfortunate that the diagram is defined in
terms of $w^{-1}$, but that is the most common convention in the
literature \cite{M2}.

\vspace{0.1in}

\begin{ex}
\[
\left[\begin{array}{cccc}
0 &	 0 &	 0 &	 1 \\
1 &	 0 &	 0 &	0\\
0 &      1 &     0 &	 0\\
0 &      0 &	1  &	0
\end{array} \right] = \left[\begin{array}{cccc}
1 &	 2 &	 3 &	 4\\
2 &	 3 &	 4 &	 1
\end{array} \right]
= 2 3 4 1 = 
\left[\begin{array}{cccc}
0 &	0 &	0 &	 1 \\
1 &    1 &	 1 &	2\\
1 &    2 &     2 &	 3\\
1 &    2 &	 3 &	4
\end{array} \right] 
\]
\[
\begin{array}{c}
\text{matrix}\\
\text{notation}
\end{array}
\hspace{.5in}
\begin{array}{c}
\text{two-line}\\
\text{notation}
\end{array}
\hspace{.2in}
\begin{array}{c}
\text{one-line}\\
\text{notation}
\end{array}
\hspace{.3in}
\begin{array}{c}
\text{rank}\\
\text{table}
\end{array}
\]
\hspace{.88in}
$\displaystyle 
\begin{array}{cccc}
* &	 . &	 . &	 . \\
* &	.&	 . &	.\\
* &    . &     . &	 .\\
 . & . & . &	.
\end{array}  
\hspace{.15in}
= \hspace{.85in} = \hspace{.2in}
(1,2,3)
$
\vspace{0.26in}
\begin{figure}[h]
\vspace{-1.0in}
\hspace{-.7in}
\includegraphics[height=1.6cm]{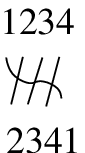}
\end{figure}

\vspace{-0.09in}
\[
\begin{array}{c}
\text{diagram of a }\\
\text{permutation}
\end{array}
\hspace{.1in}
\begin{array}{c}
\text{string diagram}
\end{array}
\hspace{.1in}
\begin{array}{c}
\text{reduced}\\
\text{word}
\end{array}
\hspace{1.0in}
\end{equation*}
\end{ex}
\vspace{.1in}

\subsection{Schubert Cells and Schubert Varieties in $\flags(\mathbb{C})$}
For a permutation $w \in S_n$, the Schubert cell
$C_{w}(E_{\ci})\subset \flags(\mathbb{C})$ is the set of all flags
$F_{\ci}$ with $\mathrm{position}(E_{\ci},F_{\ci})=w$. Equivalently,
we can write $C_{w}(E_{\ci})$ as
\begin{align*}
C_{w}(E_{\ci}) = 
\{F_{\ci} \in \flags(\mathbb{C})
\given \mathrm{dim}(E_{i} \cap F_{j}) = \mathrm{rk}(w)[i,j] \text{ for all } 1\leq i,j \leq n \}.
\end{align*}
Note, the flag $w_{\ci}$ represented by the permutation matrix for $w$
is in $C_w$ by the rank conditions.

\begin{ex}
\begin{align*}
F_{\ci} =    
\left[\begin{array}{cccc}
2 &	2 &	 7 &	 \enci 1 \\
\enci 1&0 &	0 &	0\\
0 &     \enci 1 &     0 &	0\\
0 &     0 &	 \enci 1 &	0
\end{array} \right]
\in 
\displaystyle C_{2341} = 
\left\{\left[ 
\begin{array}{cccc}
x &	 y &	 z &	 1 \\
1 &	.&	 . &	.\\
. &    1 &     . &	 .\\
 . & . & 1 &	.
\end{array}
 \right]:  x,y,z\in \mathbb{C} \right\} 
\end{align*}
\end{ex}

\bigskip

It is easy to observe the following properties for each permutation $w$.
\begin{itemize}
\item[(i)] The dimension of a Schubert cell is $  \mathtt{dim_{\mathbb{C}}}(C_{w})=  \ell(w)$.
\item[(ii)] The indeterminates for the canonical matrices in $C_w$ all
  lie in the entries of the diagram $D(w^{-1})$.  
\item[(iii)] $C_{w} = \glB\cdot w_\bullet $  is a $\glB$-orbit using the left $\glB$
  action on flags given by multiplication of matrices.  See Example~\ref{ex:bs}.  
\end{itemize}


\begin{ex}\label{ex:bs}
For arbitrary $b_{i,j}$'s with $b_{1,1},b_{2,2},b_{3,3}, b_{4,4}$ non-zero, we have
\begin{small}
\[
 \left[ \begin {array}{cccc} b_{{1,1}}&b_{{1,2}}&b_{{1,3}}&b_{{1,4}}
\\ \noalign{\medskip}0&b_{{2,2}}&b_{{2,3}}&b_{{2,4}}
\\ \noalign{\medskip}0&0&b_{{3,3}}&b_{{3,4}}\\ \noalign{\medskip}0&0&0
&b_{{4,4}}\end {array} \right] 
 \left[ \begin {array}{cccc} 0&0&0&1\\ \noalign{\medskip}1&0&0&0
\\ \noalign{\medskip}0&1&0&0\\ \noalign{\medskip}0&0&1&0\end {array}
 \right]
=
\left[ \begin {array}{cccc} b_{{1,2}}&b_{{1,3}}&b_{{1,4}}&b_{{1,1}}
\\ \noalign{\medskip}b_{{2,2}}&b_{{2,3}}&b_{{2,4}}&0
\\ \noalign{\medskip}0&b_{{3,3}}&b_{{3,4}}&0\\ \noalign{\medskip}0&0&b
_{{4,4}}&0\end {array} \right] \in C_{2341}.
\]
\end{small}
\end{ex}

\begin{defn}
The Schubert variety $X_{w}(E_{\ci})$ of a permutation $w$ is defined to be the closure of $C_{w}(E_{\ci})$ under the Zariski topology. As in the case for Schubert cells, $X_{w}(E_{\ci})$ can be written by the rank conditions:
\begin{align*}
X_{w}(E_{\ci}) = \{F_{\ci} \in \flags
\given \mathrm{dim}(E_{i} \cap F_{j}) \geq \mathrm{rk}(w)[i,j] \text{ for all } 1\leq i,j \leq n \}.
\end{align*}
\end{defn}

\begin{ex}
\begin{align*}
\left[\begin{array}{cccc}
\enci 1 &	 0 &	 0 &	 0 \\
0 &	* &	 * &	\enci 1\\
0 &    \enci 1 &     0 &	 0\\
0 &	0 & \enci  1 &	 0 
\end{array} \right] 
\in 
\displaystyle X_{2341}(E_{\ci}) = 
\overline{
\left\{
\left[\begin{array}{cccc}
* &	 * &	 * &	 1 \\
1 &	 0 &	 0 &	0\\
0 &      1 &     0 &	 0\\
0 &      0 &	 1 &	0
\end{array} \right]
 \right\}
}
\end{align*}
\end{ex}

\subsection{Combinatorics and Geometry}
Since Schubert cells are $\glB$-orbits, Schubert varieties are
$\glB$-invariant by their definition. So each Schubert variety is
equal to a disjoint union of Schubert cells 
\begin{align}\label{prelim 100}
  X_{w} = \bigcup_{v\leq w} C_{v}.
\end{align}
Thus, the containment relation on Schubert varieties $X_v \subset X_w$
defines a partial order on permutations $v\leq w$. This partial order
has a nice description: for a permutation $w$ and integers $1\leq
i<j\leq n$, we say $w < w t_{ij}$ if $w(i)<w(j)$.  \textit{Bruhat
  order} (discovered by Ehresmann 1934 \cite{ehresmann.1934}, see also
Chevalley 1958 \cite{chevalley.1958}) is defined to be the transitive closure of this
relation.

\begin{ex}
The following is the Hasse diagram of the Bruhat order on permutations in $S_{3}$.
\vspace{-0.1in}
\setlength{\unitlength}{1cm}
\newcommand{\perm}[1]{{\raisebox{-.1\unitlength}{\makebox(0,0)[b]{${#1}$}}}}
\begin{center}
\raisebox{1ex}{\begin{picture}(0,2.3)
\put(1.5,0){\perm{132}}\put(1.5,1){\perm{231}}
\put(.5,-1){\perm{123}}\put(.5,1.8){\perm{321}}
\put(-.6,0){\perm{213}}\put(-.6,1){\perm{312}}
\put(0,0){\line(0,1){1}}
\put(1,0){\line(0,1){1}}
\put(0,0){\line(1,1){1}}
\put(.5,1.5){\line(1,-1){.5}}
\put(.5,1.5){\line(-1,-1){.5}}
\put(.5,-.5){\line(-1,1){.5}}
\put(.5,-.5){\line(1,1){.5}}
\put(0,1){\line(1,-1){1}}
\end{picture}}
\end{center}
\vspace{.4in}
The Hasse diagram of $S_{n}$ is self dual, rank symmetric and rank unimodal.
\end{ex}

\begin{ex}
The Hasse diagram of $S_{4}$ is drawn in Figure \ref{Hasse of S4}.
\begin{figure}[h]
\includegraphics[height=14cm]{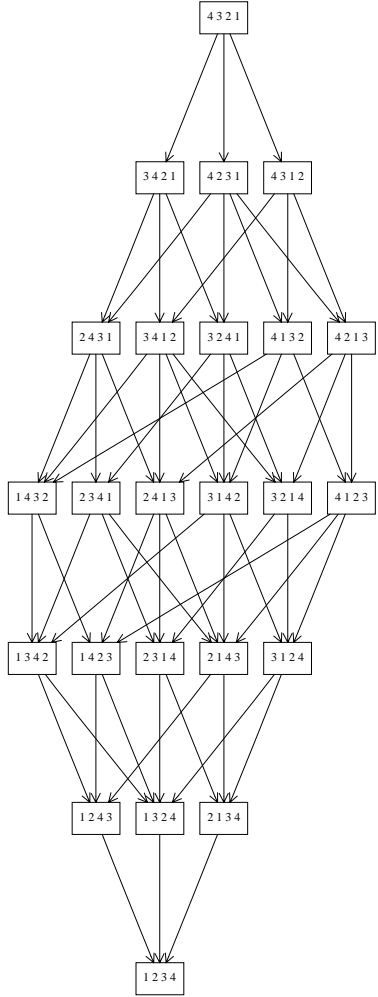}
\caption{The Hasse diagram of $S_{4}$}\label{Hasse of S4}
\end{figure}
\end{ex}

One of the benefits of Bruhat order is a description of the \textit{Poincar\'e polynomials} of Schubert varieties.
More precisely, the Poincar\'e polynomial for $H^{*}(X_{w})$ is given by
\begin{align*}
P_{w}(t)= \sum_{v \leq w} t^{2l(v)}.
\end{align*}
Because only even exponents appear in the Poincar\'e polynomials
above, we often abuse notation and define 
\begin{align*}
P_{w}(t)= \sum_{v \leq w} t^{l(v)}.
\end{align*}

\begin{ex}
  For $w=3412$, the following permutations are in the interval below
  $3412$ in Bruhat order.  
$$
\begin{matrix}
4: & (3 4 1 2)\\
&\\
3: & (3 1 4 2) (3 2 1 4) (1 4 3 2) (2 4 1 3)\\
&\\
2: & (3 1 2 4) (1 3 4 2) (2 1 4 3) (2 3 1 4) (1 4 2 3)\\
&\\
1: & (2 1 3 4) (1 2 4 3) (1 3 2 4)\\
&\\
0: & (1 2 3 4)
\end{matrix}
$$
\end{ex}
So $P_{3412}(t) = 1 + 3t + 5 t^{2} + 4 t^{3} +t^{4}$. One can see that
the Schubert variety $X_{3412}$ is not smooth since its Poincar\'e
polynomial is not symmetric (palindromic) which implies that 
Poincar\'e duality does not hold for $H^*(X_{3412})$.
\\

There are several interesting things about Bruhat order. We will encounter some of them in the rest of the paper. We will focus on the relationship between singularities of Schubert varieties and pattern avoidance of permutations. 

We leave to the reader the following exercises.
\begin{enumerate}
\item The boundary of $X_{w}$ has irreducible components given by the
  Schubert varieties $X_v$ such that $v <w$ such that
  $\ell(v)=\ell(w)-1$.  
\item $C_{w}$ is a dense open set in $X_{w}$.
\item $X_{w}$ embeds into a product of projective spaces via Pl\"ucker
  coordinates.  A matrix is mapped under this embedding to the list
  all of its lower left minors in a given order.
\item If $w_{0}=[n,n-1,\dots,1]$, then $GL_{n}/\glB=X_{w_{0}}$. 
\item The point $w_{0}$ has an affine neighborhood $C_{w_{0}}$ of
dimension $\binom{n}{2}$ and a local coordinate system.  A generic point
$g$ has an affine neighborhood $g w_{0}C_{w_{0}}$ in $\flags$. 
\item $GL_{n}$ acts transitively on the points in the flag manifold so
it is a manifold and a projective variety.  
\item The flag manifold is smooth (i.e. non-singular at every
point).
\end{enumerate}

\sectionspace
\section{Smooth Schubert varieties}\label{geom of Schubert}

Say we wish to determine which Schubert varieties are smooth and which
are not.  There are several combinatorial and geometrical observations
which makes this determination easier to characterize than a typical
variety.

First, an affine variety is smooth at a point if the dimension of its
tangent space equals the dimension of the variety near that point.  If
the variety is given in terms of the vanishing of certain polynomials,
then one can check the dimension of the tangent space by computing the
rank of the Jacobian matrix for those polynomials evaluated at the
point.  The rank is smaller than expected if and only if all minors of
a certain size vanish.  Thus, the set of points where the variety is
not smooth is itself a variety called the \textit{singular locus}.

A priori, to determine if a variety is smooth at every point, one must 
check the dimension of the tangent space at every point.  For Schubert
varieties, we make an easy observation. A point $p\in C_{v}\subset
X_{w}$ is singular in $X_{w}$ if and only if every point in $C_{v}$ is
singular in $X_{w}$ since the Schubert cell $C_{v}$ is a
$\glB$-orbit. Recalling that the singular locus of a variety is a closed
set, the equality \eqref{prelim 100} implies that each Schubert
variety $X_{w}$ is smooth if and only if $X_{w}$ is smooth at the
identity matrix $I$. One can check the singularity at the identity by
writing down the defining equations of $X_{w}$ around an affine
neighborhood of $X_{w}$ around $I$ (for example, $X_{w} \cap w_{0}
C_{w_{0}}$) and check the rank of the Jacobian matrix of the defining
polynomials. However, there is another way which provides a more
unified tool for the study of the singularity of Schubert varieties
using Lie algebras.

\subsection{Lie algebras and tangent spaces of Schubert varieties}
Recall from Section~\ref{sub:flags} that the flag variety can be
identified with the quotient of a semisimple algebraic group:
\begin{align*}
\flags = GL_{n}(\mathbb{C})/\glB = SL_{n}(\mathbb{C}) / B
\end{align*}
where $\glB$ is the set of upper triangular matrices in
$GL_{n}(\mathbb{C})$ and $B=\glB\cap SL_{n}(\mathbb{C})$.  The tangent
space of $SL_n$ is isomorphic as a vector space to its Lie algebra,
which is known to be the $n \times n$ trace zero matrices over
$\mathbb{C}$.  The Lie algebra of $B$ is the subalgebra of upper
triangular matrices with trace zero.  Let $G=SL_{n}(\mathbb{C})$,\
$\mathfrak{g} =\mathrm{Lie}(SL_{n})$ and $\mathfrak{b} =\mathrm{Lie}(B
)$. Then the tangent space of $G/B$ at the identity matrix is
isomorphic to $\mathfrak{g}/\mathfrak{b}$. Denoting by $E_{i,j}$ the
matrix with 1 in the $(i,j)$-entry and 0's elsewhere, we obtain a basis
for $\mathfrak{g}/\mathfrak{b}$ by  
\[
\mathfrak{g}/\mathfrak{b} = \mathrm{span}\{E_{j,i}\colon 1\leq i<j \leq n \}.
\]
Observe that there is a natural bijection between the basis elements
$\{E_{j,i}\colon i<j \}$ and $R:=\{t_{i,j}\colon i<j \}$ the set of reflections.

More generally, for any $v \in S_n$, the tangent space to $G/B$ at $v$ is given by
\begin{align}\label{geom of Schubert 100}
T_{v}(G/B) = v \left(\mathfrak{g}/\mathfrak{b} \right)  v^{-1}
=\mathrm{span}\{ E_{v(j),v(i)} \colon i<j \} . 
\end{align}
Why? Because, $ G/v B v^{-1}$, is an isomorphic copy of the flag
manifold $G/B$ but with respect to the base flag $v_\bullet$. Here the
flag $v_\bullet= vB$ is fixed by the left action of $vBv^{-1}$.

It is an easy exercise to check 
\begin{align*}
&v \ E_{ij}\ v^{-1}= E_{v(i),v(j)}, \\
&t_{v(i),v(j)}\ v = v\ t_{ij} 
\end{align*}
for any $1\leq i<j\leq n$. 
The next theorem gives us an explicit description of a basis of the tangent space of each Schubert variety. 
\begin{thm}\label{geom of Schubert 150}
\emph{(Lakshmibai-Seshadri \cite{Lak-Sesh.1984})}
For $v\leq w \in S_n$, the tangent space of $X_{w}$ at $v$ is given by
\begin{align*}
T_{v}(X_{w}) & \cong \mathrm{span}\{E_{v(j),v(i)} \colon i<j,\ \ vt_{ij} \leq w \}, 
\end{align*}
and hence we obtain
\begin{align*}
\mathrm{dim} \ T_{v}(X_{w}) &= \# \{(i<j)\colon \  vt_{ij} \leq w \}.
\end{align*}
\end{thm}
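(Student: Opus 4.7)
The plan is to exploit that $T_v(X_w)$ is a $T$-stable subspace of $T_v(G/B)$ and then match its weight decomposition against the $T$-stable curves in $X_w$ through $v$. By \eqref{geom of Schubert 100}, $T_v(G/B)=\mathrm{span}\{E_{v(j),v(i)}:i<j\}$, and the maximal torus $T\subset B$ acts on the basis vector $E_{v(j),v(i)}$ with weight $\epsilon_{v(j)}-\epsilon_{v(i)}$; these weights are pairwise distinct. Since $X_w$ is $B$-stable (hence $T$-stable) and $v$ is a $T$-fixed point, $T_v(X_w)$ is a $T$-submodule, and so decomposes as a direct sum of a subset of the one-dimensional weight lines $\mathbb{C}E_{v(j),v(i)}$. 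Thus the theorem reduces to showing, for each pair $i<j$, that $E_{v(j),v(i)}\in T_v(X_w)$ if and only if $vt_{ij}\leq w$, and the dimension formula then follows by counting the contributing basis vectors.

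For the inclusion $\supseteq$ I would exhibit an explicit $T$-stable curve. Given $i<j$ with $vt_{ij}\leq w$, let $U\cong\mathbb{C}$ be the one-parameter root subgroup of $G$ generated by $E_{v(j),v(i)}$, and let $Z\subset G/B$ be the closure of the orbit $U\cdot v$. Then $Z$ is a $T$-stable $\mathbb{P}^1$ tangent to $\mathbb{C}E_{v(j),v(i)}$ at $v$, whose two $T$-fixed points are $v$ and $t_{v(i),v(j)}v=vt_{ij}$, using the identity stated immediately before the theorem. The unique one-dimensional $T$-orbit on $Z$ lies in the Schubert cell $C_{\max(v,vt_{ij})}$, so by \eqref{prelim 100} we conclude $Z\subseteq X_{\max(v,vt_{ij})}\subseteq X_w$, and hence $E_{v(j),v(i)}\in T_v(X_w)$.

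For the reverse inclusion $\subseteq$, I would pass to the $T$-invariant affine chart $vU^-\cdot B/B\cong\mathbb{A}^{\binom{n}{2}}$ around $v$, with coordinates $\{x_{ab}\}_{a<b}$ in which $x_{ab}$ is the parameter along $E_{v(b),v(a)}$. By $T$-invariance together with the distinctness of weights, any linear form cutting out $T_v(X_w)$ inside $T_v(G/B)$ must be proportional to a single $x_{ab}$, so the task reduces to showing that the equation $x_{ij}=0$ is forced on $T_v(X_w)$ precisely when $vt_{ij}\not\leq w$. The defining equations of $X_w$ in this chart are the rank inequalities $\dim(E_p\cap F_q)\geq\mathrm{rk}(w)[p,q]$ rewritten as vanishing of Plücker-style minors in the $x_{ab}$; when $vt_{ij}\not\leq w$ some such inequality is strict at $vt_{ij}$, and I would extract the offending minor and read off its linear term at $v$ to obtain the required relation. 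The main obstacle is this last Taylor-expansion calculation — verifying that the witnessing minor does contribute a nonzero coefficient of $x_{ij}$ at $v$. A cleaner alternative that I would pursue in parallel, bypassing the local computation, is to combine the classification of $T$-stable $\mathbb{P}^1$'s in $G/B$ (exactly one through $v$ in each reflection direction, joining $v$ to $vt_{ij}$, as built above) with the principle that on a $T$-invariant closed subvariety of $G/B$ a weight line appears in $T_v$ precisely when the corresponding $T$-curve through $v$ is contained in the subvariety; this reduces the converse to the trivial observation that the $T$-curve joining $v$ and $vt_{ij}$ lies in $X_w$ iff both endpoints do, again by \eqref{prelim 100}.
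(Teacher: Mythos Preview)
Your setup via the $T$-weight decomposition of $T_v(X_w)$ is clean and correct, and your $\supseteq$ argument via the explicit $T$-curve $\overline{U\cdot v}$ is essentially the paper's argument: the paper simply writes the same curve as $\{v+\varepsilon E_{v(j),i}\}$ and observes it lies in $C_{vt_{ij}}$ when $v(i)<v(j)$ (and in $C_v$ otherwise).

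The gap is in your ``cleaner alternative'' for $\subseteq$. The principle you invoke --- that in a $T$-stable closed subvariety of $G/B$ a weight line occurs in $T_v$ precisely when the corresponding $T$-curve lies in the subvariety --- is \emph{false} for general $T$-stable subvarieties, and for Schubert varieties it is exactly the content of the theorem you are trying to prove, so invoking it is circular. A concrete counterexample already in the big cell of $SL_4/B$ at the identity (coordinates $x_{ab}$, $a>b$, with $T$-weight $\epsilon_a-\epsilon_b$): the hypersurface
\[
X \;=\; V\bigl(x_{41}^2 - x_{21}^2 x_{42}^2 - x_{31}^2 x_{43}^2\bigr)
\]
is irreducible, $T$-stable (all monomials have weight $2(\epsilon_4-\epsilon_1)$), and has $T_0(X)$ equal to the whole tangent space since the defining equation has no linear term; yet the $x_{41}$-axis is \emph{not} contained in $X$. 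So the implication ``weight line in $T_v$ $\Rightarrow$ $T$-curve in $X$'' genuinely requires something specific to Schubert varieties.

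Your first approach is the right one, and the ``Taylor-expansion calculation'' you flag as the obstacle is short. If $vt_{ij}\not\leq w$ with $v(i)<v(j)$, pick $(p,q)$ with $v(i)\leq p<v(j)$, $i\leq q<j$ and $\mathrm{rk}(v)[p,q]=\mathrm{rk}(w)[p,q]=:q-r$; such $(p,q)$ exists precisely because $vt_{ij}\not\leq w$ while $v\leq w$. In the submatrix on rows $>p$ and columns $\leq q$, the permutation $v$ has exactly $r$ ones, and both row $v(j)$ and column $i$ are empty there. The $(r{+}1)\times(r{+}1)$ minor using those $r$ ones together with row $v(j)$ and column $i$ is a defining equation of $X_w$, and in the affine chart at $v$ it equals $\pm\, x_{ij}$ plus terms of order $\geq 2$. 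Hence its differential at $v$ pairs nontrivially with $E_{v(j),v(i)}$, giving $E_{v(j),v(i)}\notin T_v(X_w)$. This is exactly the computation underlying the paper's more informal argument: the paper phrases the same thing by saying the perturbed matrix $v+\varepsilon E_{v(j),i}$ lands in $C_{vt_{ij}}$, which forces a rank condition of $X_w$ to fail to first order.
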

\begin{proof} Recall from the definition of a Lie algebra that
$E_{v(j),v(i)} \in T_{v}(X_{w})$ is equivalent to $(I + \varepsilon
E_{v(j),v(i)} )v \in X_{w}$ for infinitesimal $\varepsilon>0$ where
we can assume $\varepsilon^2 =0$.  Think of $(I + \varepsilon
E_{v(j),v(i)} )$ as a matrix in $G$ acting on the left of the flag
$v_\bullet$ by moving the flag a little bit in the direction of
$E_{v(j),v(i)}$.  In particular, $(I + \varepsilon
E_{v(j),v(i)} )v = v + \varepsilon E_{v(j),v(i)}v =$ $v + \varepsilon E_{v(j),i} \in X_{v}$ if and
only if $v(i)>v(j)$ which is equivalent to $vt_{ij}\leq v$.  Since $v
\leq w$ implies $T_{v}(X_{v})\subset T_{v}(X_{w})$ we see that
$E_{v(j),v(i)}$ is in $T_{v}(X_{w})$ whenever $v(i) > v(j)$.

On the other hand, if $v(i)<v(j)$ then $v + \varepsilon E_{v(j),i} \in
C_{vt_{ij}}$ and so $E_{v(j),v(i)} \in T_{v}(X_{w}) $ if and only if
$vt_{ij} \leq w$.  Thus, in either case $E_{v(j),v(i)} \in
T_{v}(X_{w}) $ if and only if $vt_{ij} \leq w$.   Thus, $\mathrm{dim} \ T_{v}(X_{w}) \geq \# \{(i<j)\colon \  vt_{ij} \leq w \}.$

To prove $\mathrm{dim} \ T_{v}(X_{w}) \leq \# \{(i<j)\colon \ vt_{ij}
\leq w \}$, assume there exists coefficients $a_{i,j}$ for $1\leq
i<j\leq n$ such that $v + \varepsilon \sum a_{i,j} E_{v(j),i} \in
X_{w}$.  Say $v + \varepsilon \sum a_{i,j} E_{v(j),i} \in C_{v'}$ for
some $v'\leq w $.  Since $\varepsilon<<1$, none of the minors in $v$
which are nonzero will vanish in $v + \varepsilon \sum a_{i,j}
E_{v(j),i} $, so the rank table for $v + \varepsilon \sum a_{i,j}
E_{v(j),i} $ dominates the rank table for $v$ in every position.
Hence, $v\leq v'\leq w$.  Thus, for each $a_{i,j}\neq 0$, we have $v +
\varepsilon E_{v(j),i} \in X_{w}$ so $\sum a_{i,j} E_{v(j),v(i)}$ is
in the span of the independent set of $E_{v(j),v(i)}$ already known to
be in $T_{v}(X_{w})$.
\end{proof}

\begin{cor}
$X_{w}$ is smooth at $v\in S_n$ if and only if 
\begin{align*}
\mathrm{dim} \ T_{v}(X_{w}) &:= \# \{(i<j)\colon \  vt_{ij} \leq w \} =l(w)
\end{align*}
or equivalently if and only if 
\begin{align*}
\# \{(i<j)\colon  v<vt_{ij} \leq w \} =l(w)-l(v).
\end{align*}
\end{cor}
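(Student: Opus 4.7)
The plan is to derive this directly from Theorem~\ref{geom of Schubert 150} together with the standard fact that $X_w$ is an irreducible variety of dimension $\ell(w)$. Recall that a point on an irreducible variety is smooth precisely when the dimension of the Zariski tangent space at that point equals the dimension of the variety; moreover, for any point on a variety the tangent space dimension is at least the variety dimension. So what needs to be checked reduces to two things: that $\dim X_w = \ell(w)$, and that the given counts match the tangent space dimension from Theorem~\ref{geom of Schubert 150}.

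For the first point, I would cite property (i) of Schubert cells, $\dim_{\mathbb{C}} C_w = \ell(w)$, together with the fact that $C_w$ is a dense open subset of $X_w$ (left as an exercise at the end of Section~2), which gives $\dim X_w = \ell(w)$. Combined with Theorem~\ref{geom of Schubert 150}, smoothness of $X_w$ at $v$ is equivalent to
\[
\#\{(i<j) \colon v t_{ij} \leq w\} = \ell(w),
\]
which is the first equivalent condition.

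To obtain the second, cleaner formulation, I would partition the indexing set according to whether $v t_{ij} < v$ or $v < v t_{ij}$. A pair $i<j$ falls in the first class exactly when $v(i) > v(j)$, i.e.\ when $(i,j)$ is an inversion of $v$; the number of such pairs is $\ell(v)$ by the inversion-count description of length given in Section~2. For any such pair, $v t_{ij} < v \leq w$ automatically, so the condition $v t_{ij} \leq w$ is free. The remaining pairs are exactly those counted in $\#\{(i<j) \colon v < v t_{ij} \leq w\}$. Hence
\[
\#\{(i<j) \colon v t_{ij} \leq w\} = \ell(v) + \#\{(i<j) \colon v < v t_{ij} \leq w\},
\]
and subtracting $\ell(v)$ from both sides of the smoothness equation yields the second formulation.

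I do not expect a serious obstacle here; this really is a corollary. The only slightly delicate point is justifying that $\dim X_w = \ell(w)$ (as opposed to $\dim T_v X_w \geq \ell(w)$ only) and that smoothness can be detected via equality of these two numbers at a single closed point; both are standard and invoked above by reference to Section~2 and general variety theory.
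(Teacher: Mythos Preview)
Your proposal is correct and is precisely the natural derivation the paper leaves implicit; the paper states this as an unproved corollary of Theorem~\ref{geom of Schubert 150}, and your argument fills in exactly the expected steps (dimension of $X_w$ from the dense open cell, the smoothness-as-tangent-dimension criterion, and the inversion partition of the transpositions).
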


\begin{ex}
  Consider the case $n=4$.  The Schubert variety $X_{4231}$ is not
  smooth at the point $v=2143$.  For all 6 transpositions,
  $vt_{ij}\leq w$, but $\ell(w)=5$.  Also, $6=\#\{t_{ij}\leq 4231\} =
  \mathrm{dim}\ T_{id}(X_{4231})>\ell(4231)=5$. See Figure
  \ref{Interval [id,4231]} to verify these statements.
  Similarly, one can check $X_{3412}$ is not smooth at $v=1324$ and is
  smooth at all $v'\leq w$ such that $v' \not \leq v$.    It follows
  that
\begin{align*}
&\mathrm{Sing}(X_{4231}) = X_{2143}\\
&\mathrm{Sing}(X_{3412}) = X_{1324}.
\end{align*}
Note that $3412$ is the reverse of $2143$ and $4231$ is the reverse of $1324$.  
All other Schubert varieties $X_{w}$ for $w$ in $S_{4}$ are smooth.

\begin{figure}[h]
\centering
\includegraphics[height=9.0cm]{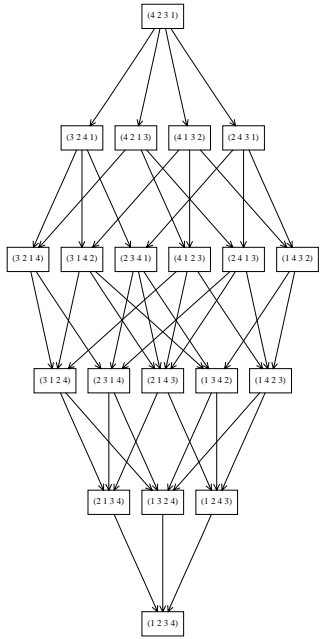}
\caption{The interval $[id,4231]$}\label{Interval [id,4231]}
\end{figure}
\end{ex}

\subsection{Bruhat graphs}
\begin{defn}
For a permutation $w$, the \textit{Bruhat graph} for $w$ is a graph
whose vertex set is $\{v\in S_n\colon v\leq w \}=[id,w]$ and there is an
edge between $v$ and $vt_{ij}$ if and only if both $v,vt_{ij} \leq w$.
\end{defn}

For example, the Bruhat graph of $w=4321$ is drawn in Figure
\ref{Bruhat graph w=432}.  Observe that the degree of $v$ (i.e. the
number of edges connected to $v$) in the Bruhat graph for $w$ is
$\mathrm{dim}\ T_{v}(X_{w})$.

The Bruhat graph for $w$ has a geometric interpretation: it is the
\textit{moment graph} of the Schubert variety $X_{w}$. Let $T\subset
GL_{n}$ be the set of invertible diagonal matrices, then the
permutation matrices in $GL_{n}/\glB$ are exactly the $T$-fixed points.
\begin{itemize}
\item[(i)] The permutations in $[id,w]$ are in bijection with the
  $T$-fixed points of $X_{w}$.
\item[(ii)] If $v,vt_{ij}\leq w$, then the edge between $v$ and
  $vt_{ij}$ in the Bruhat graph for $w$, is realized as the
  corresponding curve passing through the flags corresponding to $v$
  and $vt_{ij}$
\[
L_{v} = \{v+ z E_{v(j),i}\colon z \in \mathbb{C} \} \cup \{vt_{ij} \}   \approx \mathbb{P}^{1}.
\]
This curve is $T$-invariant, and pointwise fixed by a torus $T'\subset T$ of codimension $1$.
\end{itemize}
Schubert varieties are examples of \textit{GKM-spaces}
studied by Goresky-Kottwitz-MacPherson \cite{GKM} and
others.  
It turns out that much of the $T$-equivariant topology or geometry of
GKM spaces can be described in terms of their moment graph.
\begin{figure}[h]
\centering
\includegraphics[height=9cm]{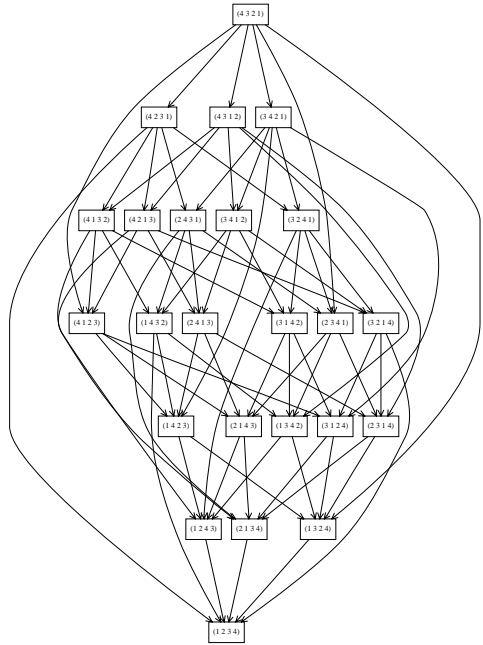}
\caption{The Bruhat graph of $w=4321$}\label{Bruhat graph w=432}
\end{figure}

\subsection{Lakshmibai-Sandhya Theorem}\label{Lakshmibai-Sandhya Theorem}
There exists a simple criterion for characterizing smooth Schubert
varieties using permutation pattern avoidance.  Pattern avoidance
first appeared in work by Knuth \cite{MR0378456}, Pratt
\cite{Pratt:1973} and Tarjan \cite{tarjan} related to computer sorting
algorithms in the 1960's and 1970's.  Today, many families of
permutations are characterized by pattern avoidance or variations on
that idea.  We discuss one of the key results that brought this
technique into the study of Schubert varieties.

Lakshmibai-Sandhya proved the following criterion for the singularity
of Schubert varieties in 1990. See also the mutually independent work  by Haiman
(unpublished), Ryan \cite{ryan}, and Wolper \cite{Wolper}.
\begin{thm}\label{Lak-San}
\emph{(}Lakshmibai-Sandhya \cite{Lak-San}\emph{)} 
$X_{w}$ is singular if and only if
  $w$ has a subsequence with the same relative order
  as 3412 or 4231.
\end{thm}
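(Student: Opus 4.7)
My plan is to invoke the Corollary preceding the theorem at the base point $v = id$. Since the singular locus of $X_w$ is closed and $\glB$-invariant, and every Schubert cell $C_v \subset X_w$ is a single $\glB$-orbit with $C_{id} \subset X_w$ for every $w$, the variety $X_w$ is smooth everywhere iff it is smooth at the identity flag. The Corollary then translates smoothness into the combinatorial equality
$$\#\{(i,j) : 1 \leq i < j \leq n,\ t_{ij} \leq w\} \;=\; \ell(w),$$
the inequality $\geq$ being automatic since $\dim T_{id} X_w \geq \dim X_w = \ell(w)$.

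For the forward direction (containing a pattern implies singular), I would first establish a pattern-preservation principle: if $w \in S_n$ contains $w' \in S_k$ as a pattern at positions $p_1 < \cdots < p_k$, then a strict excess $\dim T_{id} X_{w'} > \ell(w')$ lifts to a strict excess $\dim T_{id} X_w > \ell(w)$. The lift translates any extra reflection $t_{ab} \leq w'$ (with $(a,b)$ not recoverable from an inversion of $w'$) into a corresponding "lifted" reflection between appropriately chosen positions in $w$, using the rank-table characterization of Bruhat order. Once this lifting is established, the problem reduces to the two base cases $w' = 4231$ and $w' = 3412$ in $S_4$, where direct enumeration via the Corollary gives $\dim T_{id} X_{4231} = 6 > 5 = \ell(4231)$ (as already noted in the excerpt) and analogously $\dim T_{id} X_{3412} = 5 > 4 = \ell(3412)$. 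For the reverse direction (avoidance implies smooth), I would induct on $n$. The base $n \leq 3$ is immediate since neither pattern fits. For the inductive step, I would exploit a structural feature of 3412- and 4231-avoiding permutations: such $w$ admits a "corner decomposition" where a contiguous block of consecutive values occupies a contiguous block of positions at one end, and can be stripped to produce a pattern-avoiding $w' \in S_{n-1}$. The set $\{(i,j) : t_{ij} \leq w\}$ and the inversion set of $w$ then split compatibly across the cut, with the stripped-off part contributing the same count to both sides by direct analysis, and the residual part matched via the inductive hypothesis.

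The chief obstacle is the combinatorial bookkeeping. For generic $w$, the inversions $(i,j)$ of $w$ and the pairs $(i,j)$ with $t_{ij} \leq w$ neither coincide nor inject canonically into each other: for instance $w = 312$ has inversion $(1,3)$ yet $t_{13} \not\leq 312$, while $w = 4231$ has non-inversion $(2,3)$ with $t_{23} \leq 4231$. The exact cancellation of these discrepancies precisely when $w$ avoids both patterns is the combinatorial heart of the argument, and historically required detailed rank-matrix manipulations in the original Lakshmibai-Sandhya paper, with the alternative proofs of Haiman, Ryan, and Wolper taking different geometric routes.
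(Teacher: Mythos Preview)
Your outline diverges from the paper's sketch in both directions.

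For the forward implication (pattern $\Rightarrow$ singular), the paper does \emph{not} work at the identity. Given a $4231$ or $3412$ pattern in $w$ at positions $i_1<i_2<i_3<i_4$, it constructs a specific $v\le w$ by rearranging those four entries into the corresponding singular-locus pattern ($2143$ for a $4231$, and $1324$ for a $3412$) and shows $X_w$ is singular at that $v$. The key device is a flattening lemma: if $v$ and $w$ agree in position $i$, then $v\le w$ iff the permutations obtained by deleting position $i$ from each remain comparable. Since $vt_{ij}$ and $w$ agree outside at most six positions, the inequality $\#\{t_{ij}:v<vt_{ij}\le w\}>\ell(w)-\ell(v)$ reduces to a finite computer verification in $S_6$. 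Your strategy---lifting the excess $\#\{t_{ab}\le w'\}-\ell(w')$ from the pattern $w'$ to $w$ at the identity---is in the spirit of the Billey--Braden and Billey--Postnikov results that appear later in this survey, but as stated it has a gap: you implicitly need $t_{p_a,p_b}\le w$ whenever $t_{a,b}\le w'$, and you need the non-pattern pairs not to produce a compensating deficit. Neither is automatic for a non-standard pattern embedding, and the paper's choice of $v$ together with the reduction to $S_6$ is exactly what sidesteps this.

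For the reverse implication (avoidance $\Rightarrow$ smooth), the paper does not argue combinatorially: it cites Lakshmibai and Sandhya's original geometric argument that avoiding both patterns is equivalent to an equidimensionality property of certain projections, which forces smoothness. Your inductive ``corner decomposition'' is a genuinely different route. Recursive structures of this sort do underlie Gasharov's factorization of $P_w(t)$ and the iterated fibre-bundle descriptions of Ryan, Wolper, and Gasharov--Reiner mentioned in the paper, so the idea is reasonable; but the specific assertion that one can always strip a block at one end so that the inversion count and the count $\#\{t_{ij}\le w\}$ drop by the same amount needs a careful case analysis that your outline does not supply.
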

 
More generally, given any sequence of distinct real numbers $r_1\ldots
r_m$ define $fl(r_1\ldots r_m)$ to be the permutation $v \in S_m$ such
that $r_i < r_j$ if and only if $v_i<v_j$.  Recall that a permutation
is uniquely defined by its inversion set, so this condition uniquely
defines v.  The $fl$ operator \textit{flattens} the sequence.  Then, a
permutation $w=w_1w_2\ldots w_n \in S_n $ \textit{contains} a pattern
$v=v_1v_2\ldots v_m \in S_m$ for $m<n$ if there exists $i_1 < i_2 <
\ldots < i_m$ such that $fl(w_{i_1} w_{i_2} \ldots w_{i_m}) =v$.
Otherwise, $w$ \textit{avoids} $v$.

\begin{ex}
  The permutation $w=6 2 5 4 3 1$ contains the subsequence $6241$
  which flattens into a $4231$-pattern. Hence, $X_{625431}$ is
  singular. Also, $w=6 1 2 5 4 3$ avoids the patterns $4231$ and
  $3412$ which implies that $X_{612543}$ is non-singular.
\end{ex}

Let us sketch one approach to proving Theorem~\ref{Lak-San} by
applying Theorem~\ref{geom of Schubert 150}.  Say $w$ contains a
$3412$ or $4231$ pattern in positions $i_1<i_2<i_3<i_4$.  Let $v$ be
the permutation obtained from $w$ by rearranging the numbers $w_{i_1}
w_{i_2} w_{i_3} w_{i_4} $ according to the pattern for the
corresponding singular locus in $S_4$.  Specifically, if $w_{i_1}
w_{i_2} w_{i_3} w_{i_4} $ is a $4231$ then replace $w_{i_1} w_{i_2}
w_{i_3} w_{i_4} $ by the $2143$ pattern $w_{i_2} w_{i_4} w_{i_1}
w_{i_3} $ in the same positions.  If $w_{i_1} w_{i_2} w_{i_3} w_{i_4}
$ is a $3412$ then replace $w_{i_1} w_{i_2} w_{i_3} w_{i_4} $ by the
$1324$ pattern $w_{i_3} w_{i_1} w_{i_4} w_{i_2} $ in the same
positions.  For example, if $w=625431$ and we use the $6241$ instance
of the pattern $4231$, then $v=215634$ which contains a $2143$ pattern
among the values $1,2,4,6$.

We claim that $X_w$ is singular at the point $v$ by construction.  The
proof proceeds by comparing $\ell(w) - \ell(v)$ with the number of
$t_{ij}$ such that $v<vt_{ij}\leq w$.  For $i,j \in
\{i_1,i_2,i_3,i_4\}$, we know there will be strictly more such
transpositions than the length difference in these positions.  A key
lemma now states that if two permutations $v$ and $w$ agree in
position $i$, then $v \leq w$ if and only if $fl(v_1\ldots
\widehat{v_i} \ldots v_n) \leq fl(w_1\ldots \widehat{w_i} \ldots w_n)$
\cite[Lemma 2.1]{B3}.   This follows from looking at the rank tables of
two permutations differing by a transposition.  Next, note that
$vt_{ij}$ and $w$ differ in at most 6 positions.  Thus, by a computer
verification on permutations of length 6 one can show that
$$
\#\{t_{ij} \colon  v < vt_{ij} \leq w\} > \ell(w) -\ell(v).
$$

In the other direction, assume that $w$ avoids the patterns $4231$ and
$3412$.  Lakshmibai and Sandhya show that avoiding these patterns is
equivalent to an equidimensionality property of certain projections
which 
implies smoothness.

Haiman's proof also contained the following enumerative formula as a
corollary.  Since his paper was never published, it wasn't until 2007
that this result had a proof in the literature due to  Bousquet-M\'elou and Butler.

\begin{cor}\cite{BousquetMelou-Butler}\label{cor:smooth.perms}
  There is a closed form for the generating function for the sequence $v_n$ counting the number of smooth Schubert varieties for $w \in S_n$ :
\begin{align}\label{gf:haiman}
V(t) &= \ \frac{1-5t+3t^{2}+t^{2}\sqrt{1-4t}}{1-6t +8t^{2} -4 t^{3}}\\ \notag
 &= \ 1+ t+2t^{2}+6t^{3}+22t^{4}+88t^{5}+366t^{6}+1552t^{7}+6652t^{8}+
 O(t^{9}).
\end{align}
\end{cor}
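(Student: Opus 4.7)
The Lakshmibai-Sandhya Theorem (Theorem 3.5) reduces the problem to a purely enumerative one: $v_n$ equals the number of permutations in $S_n$ avoiding both patterns $3412$ and $4231$, so the task is to compute $V(t) = \sum_n v_n t^n$ in closed form. I would take the approach of Bousquet-M\'elou and Butler: set up a recursive decomposition of the class $\mathrm{Av}(3412, 4231)$ and translate it into a functional equation solvable by the kernel method.

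The starting structural observation is that avoiding both $3412$ and $4231$ imposes a rigid hierarchical shape on a permutation $w$. Conditioning on the position $k$ of the maximum value $n$, the entries to the left and right of $n$ must each be smooth permutations whose value sets interleave in a severely restricted way; otherwise one of the forbidden patterns is created with $n$ playing the role of the largest entry. Making this precise should yield a bijection between smooth permutations and a tree-like combinatorial class (the \emph{forest-like} permutations of Bousquet-M\'elou and Butler), whose enumeration is governed by a small system of algebraic generating functions.

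The technical steps I would carry out are then the following. First, introduce a bivariate refinement $V(t,u) = \sum v_{n,k}\, t^n u^k$ in which $u$ marks a catalytic statistic (for example the length of the rightmost increasing run, or the value of $w_n$) chosen so the pattern-avoidance recursion closes; a naive univariate recursion on $v_n$ does not close because the forbidden-pattern constraint couples distant positions of $w$. Second, translate the structural decomposition into a functional equation of the shape
\begin{equation*}
K(t,u)\, V(t,u) \;=\; A(t,u) + B(t,u)\, V(t,1),
\end{equation*}
for explicit polynomials $K, A, B$. Third, apply the kernel method: substitute for $u$ an algebraic function $u_0(t)$ at which $K(t, u_0(t)) = 0$, turning the equation into an algebraic identity for $V(t) = V(t,1)$. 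Fourth, solve this (quadratic) equation, selecting the branch fixed by $V(0) = 1$ and $[t]V(t) = 1$, and simplify to arrive at the claimed closed form. Finally, expand the candidate and confirm agreement with the initial terms $1,1,2,6,22,88,366,\ldots$.

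The main obstacle I expect is the first step: identifying the correct catalytic statistic. This requires a genuinely combinatorial insight into how the forbidden-pattern constraint propagates under appending or deleting an entry, and a poor choice produces a functional equation with too many catalytic variables or of too high degree for the kernel method to apply cleanly. Once the right refinement is in hand, the algebraic manipulations in the remaining steps, though calculationally involved, are mechanical applications of standard tools for algebraic generating functions; the emergence of the radical $\sqrt{1-4t}$ in the final answer is a natural signature of the quadratic equation solved in step three.
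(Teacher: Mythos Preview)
The paper does not actually give a proof of this corollary; it is stated as a cited result, with the remark that the formula was first obtained by Haiman in unpublished work and first proved in print by Bousquet-M\'elou and Butler in 2007. So there is no ``paper's own proof'' against which to compare your proposal.

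That said, your outline is broadly faithful to the Bousquet-M\'elou--Butler approach you name. Their paper does proceed by identifying a structural decomposition of $\mathrm{Av}(3412,4231)$ --- the bijection with \emph{forest-like permutations} you mention --- and then enumerating that class via a generating-function argument that produces an algebraic equation whose solution involves $\sqrt{1-4t}$. Your description of the kernel-method machinery is a reasonable schematic for how such arguments typically run, though it is worth noting that you have not actually specified the catalytic statistic or written down the functional equation, and you yourself flag this as the crux. As written, the proposal is an accurate roadmap of the cited proof rather than a proof itself: the genuine combinatorial content lies precisely in the bijection and the choice of refinement, both of which you have left as tasks to be carried out.
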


Note that by the Lakshmibai-Sandhya theorem, testing for smoothness of
Schubert varieties can be done naively in polynomial time, $O(n^{4})$,
based on the characterization of avoiding 3412 and 4231.  As we
pointed out in the introduction, the Guillemot-Marx
\cite{Guillemot.Marx} construction leads to a linear time algorithm in
$n$ for testing if a permutation in $S_n$ contains either a 3412 or
4231 pattern.

Historically, there were some incremental results leading up to the
linear time algorithm to detect pattern avoidance by Guillemot and
Marx.  These other algorithms might still have useful applications, so
we mention a couple of them here.  In \cite{madras-liu}, Madras and
Liu study the 4231-avoiding permutations.  They point out that using
Knuth's original characterization of stack-sortable permutations in
linear time, one can find a $4231$ pattern in $O(n^2)$ time.  In fact,
Albert-Aldred-Atkinson-Holton show that every length 4 pattern can be
detected in $O(n \mathrm{log} n)$ time \cite{Albert01algorithmsfor}.

\sectionspace
\section{10 Pattern Avoidance Properties}\label{s:properties}
In this section, we exhibit the ubiquity of pattern avoidance as a
tool to characterize important properties in Schubert geometry and
related areas.  We give 10 distinct properties which are characterized
by pattern avoidance.  Each property will have a description in terms
of avoiding certain patterns.  Often these permutation families have
other distinguishing features as well.

The first family of permutations defined by pattern avoidance is the
3412- and 4231-avoiding permutations appearing in the
Lakshmibai-Sandhya Theorem.  It is a family rich in structure.  For
the record, we state all the properties equivalently characterized by
these two patterns.  The history, citations, and some definitions
follow the statement.

\begin{PAP}
\textit{\!\!
The following are equivalent for $w \in S_n$.
\begin{enumerate}
\item The one-line notation for $w$ avoids 3412 and 4231.
\item $X_{w}$ is smooth.
\item $\ell(w) = \#\{t_{ij}\leq w \}$.
\item \label{i:graph} The Bruhat graph for $w$ is regular and every vertex has degree $\ell(w)$.
\item \label{i:palindromic} The Poincar\'{e} polynomial for $w$, $P_{w}(t)= \sum_{v \leq w} t^{l(v)}$ is palindromic.
\item \label{i:factor} The Poincar\'{e} polynomial for $w$ factors as
\[
P_{w}(t)=\prod_{i=1}^{k}(1+t+t^{2}+\dots +t^{e_{i}})
\]
for some positive integers $\{e_1, e_2, ..., e_k\}$ such that $\ell(w)
= \sum e_i$.  
\item \label{i:hyper} 
  The Poincar\'{e} polynomial $P_w(t)$ is equal to the generating function
  $R_w(t)$ for the number of regions $r$ in the complement of the
  inversion hyperplane arrangement weighted by the distance of each
  region to the fundamental region.  In symbols,
\[
R_{w}(t)=\sum_r t^{d(r)} = \sum_{v \leq w} t^{l(v)} = P_{w}(t).
\]
Here, $d(r)$ is the number of hyperplanes crossed in a walk starting
at the fundamental region and going to the region $r$.
\item \label{i:hyper.free} 
The inversion arrangement for $w$ is free and the number of
  chambers of the arrangement is equal to the size of the Bruhat
  interval $[id,w]$.
\item \label{i:KL} 
   The Kazhdan-Lusztig polynomial $P_{x,w}(t)= 1$ for all $x\leq w$.
\item \label{i:KL.id} 
   The Kazhdan-Lusztig polynomial $P_{id,w}(t)= 1$.  
\end{enumerate}
}
\end{PAP}

We have already discussed the equivalence of the first three items.
Items~(\ref{i:graph}), (\ref{i:palindromic}), and~(\ref{i:KL.id}) are due to
Carrell and Peterson \cite{carrell94}.  Note, Carrell is the sole
author on the paper cited, but he always acknowledges Peterson as a
collaborator on this work so we give them both credit.  The term
\textit{palindromic} refers to the sequence of coefficients of the
polynomial, so the coefficient of $t^i$ equals the coefficient of
$t^{\ell(w)-i}$ in a palindromic Poincar\'{e} polynomial.

Item~(\ref{i:factor}) about factoring Poincar\'{e} polynomials is due
to Gasharov \cite{gasharov97}.  This factorization implies that the
geometry of smooth Schubert varieties has particularly nice structure
in terms of iterated fiber bundles over Grassmannians
\cite{GR2000,richmond.slofstra.2012,richmond.slofstra.2012,ryan,Wolper}.

\begin{ex} The permutation $w=4321$ avoids the patterns $3412$ and
  $4231$.   It has a palindromic Poincar\'e polynomial that also
  factors nicely, 
\begin{align*}
P_{4321}(t) 
&= (1+t)(1+t+t^{2})(1+t+t^{2}+t^{3})\\
&= 1+3t+5t^2+6t^3+5t^4+3t^5+t^6.
\end{align*}
\end{ex}

\begin{ex}  The permutation $3412$ is one of the two cases in $S_4$
  where the Poincar\'e polynomial does not have the nice
  factorization, nor the palindromic property.  Here 
$$P_{3412}(t) = 1+3t+5t^2+4t^3+t^4.$$
\end{ex}
\vspace{0.1in}

Item~(\ref{i:hyper}) about the inversion hyperplane arrangement is due
to Oh-Postnikov-Yoo \cite{OPY}.  This arrangement is given by the
collection of hyperplanes defined by $x_{i}-x_{j}=0$ for all $i<j$
such that $w(i)>w(j)$.  This generalizes the notion of the Coxeter
arrangement of type $A_{n-1}$ given by all the hyperplanes
$x_{i}-x_{j}=0$ for all $i<j$, so it is the inversion arrangement for
$w_0$.  The Coxeter arrangement has $n!$ regions corresponding to all
the permutations.  In this case, the statistic $d(w)$ equals
$\ell(w)$.  Note no explicit bijective proof of Item~(\ref{i:hyper}) is known.
The inversion arrangement comes up again in Property~5 below.

Item~(\ref{i:hyper.free}) is due to Slofstra \cite{slofstra.2013}.
Here a central hyperplane arrangement in a Euclidean space $V$ is said
to be \textit{free} if the module of derivations of the complexified 
arrangement is free as a module over the polynomial ring
$\mathbb{C}[V_{\mathbb{C}}]$.  We refer the reader to this paper for
more background.  Note it also gives an algebraic interpretation for
the generalized exponents $e_1, e_2, \ldots, e_k$ in terms of degrees
of a homogeneous basis for the module of derivations.


Items~(\ref{i:KL}) and~(\ref{i:KL.id}) concern the Kazhdan-Lusztig
polynomials \cite{k-l}.  These polynomials play an important role in
the study of the singularities of Schubert varieties and in
representation theory.  We recall the definitions here, highlight
some important developments,  and
refer the reader to the textbooks by Humphreys \cite{Hum} and
Bj\"orner-Brenti \cite{b-b} for more details.

The \textit{Hecke algebra} $\mathcal{H}$ associated with $S_n$ is an
algebra over $\Z[q^{\frac{1}{2}},q^{\frac{-1}{2}}]$ generated by
$\{T_i\colon 1\leq i\leq n-1\}$ with
the relations 
\begin{enumerate}
\item $(T_{i})^{2} = (q-1) T_{i} + q,$
\item $T_{i}T_{j} = T_{j} T_{i}$ \hspace{.1in} if $|i-j|>1,$ 
\item $T_{i}T_{i+1}T_{i} = T_{i+1} T_{i} T_{i+1}$   for all $1\leq i<n$.  
\end{enumerate}
This definition is patterned after the definition of the symmetric
group $S_n$ written in terms of its generating set of adjacent
transpositions and their relations. In fact, if we take the
specialization $q=1$, then the resulting algebra is the group algebra
of $S_n$.  The relations (2) and (3) are called the \textit{braid
  relations}.  The braid relations imply that $T_{w} = T_{i_{1}}
T_{i_{2}}\cdots T_{i_{p}}$ is well defined for any reduced expression
$w=s_{i_{1}} s_{i_{2}} \dots s_{i_{p}}$.  We will use the notation
$T_{id} =1 \in \mathcal{H}$ for the empty product of generators.  

An easy observation is that $\{T_{w} \colon w \in S_n \}$ is a linear
basis for $\mathcal{H}$ over $\Z[q^{\frac{1}{2}},q^{\frac{-1}{2}}]$ .
One can also observe that the $T_{w}$'s are invertible over $\Z[q,
q^{-1}]$ which can be see as follows.  First check that $\left(T_{i}
\right)^{-1} = q^{-1} T_{i} - \left(1- q^{-1} \right)$ by multiplying
by $T_{i}$ and using the stated relations.  Then, we have
$(T_{w})^{-1} = (T_{i_{p}})^{-1} \cdots (T_{i_{1}})^{-1}$ for a
reduced expression $w=s_{i_{1}} s_{i_{2}} \dots s_{i_{p}}$.

Next, let us review the \textit{Kazhdan-Lusztig involution}. Consider
the $\Z$-linear transformation $i\colon\mathcal{H}\rightarrow \mathcal{H}$
sending $T_{w}$ to $(T_{w^{-1}})^{-1}$ and $q$ to $q^{-1}$,
respectively.

\begin{thm}\label{Kazhdan-Lusztig basis}
\emph{(}Kazhdan-Lusztig \cite{k-l}\emph{)}
There exists a unique basis $\{C'_{w} \colon w \in S_n\}$ for the Hecke algebra $\mathcal{H}$ over $\mathbb{Z}[q^{\frac{1}{2}},q^{\frac{-1}{2}}]$  such that 
\begin{enumerate}
\item[(i)] $i( C'_{w} ) = C'_{w}$.
\item[(ii)] The change of basis matrix from $\{C'_w\}$ to $\{T_w\}$
is upper triangular when the elements of $S_n$ are listed in a total
order respecting Bruhat order, and the expansion coefficients $P_{x,w}(q)$ in
\[
C_w' = q^{-\frac{1}{2} \ell(w)} \sum_{x\leq w} P_{x,w}(q) \ T_x 
\]
have the properties $P_{w,w}=1$ and for all $x<w$, \ $P_{x,w}(q) \in \mathbb{Z}[q]$
with degree at most
\[
\frac{\ell(w)-\ell(x)-1}{2}.
\]
\end{enumerate}
\end{thm}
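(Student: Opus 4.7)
The plan is to prove uniqueness and existence separately, both by induction on the length $\ell(w)$, relying crucially on the interplay between the $i$-invariance condition and the degree bound in (ii).

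First, I would verify a few preliminaries: that $i$ is actually an involutive $\mathbb{Z}$-algebra antihomomorphism (it suffices to check the defining relations and that $i^2(T_i)=T_i$, which follow from the explicit formula for $(T_i)^{-1}$ given in the excerpt), and to record the base case $C'_{\mathrm{id}}=1$, which satisfies both conditions trivially, as well as the case $\ell(w)=1$: one checks directly that $q^{-1/2}(T_i+1)$ is $i$-invariant and has the required expansion with $P_{\mathrm{id},s_i}=1$.

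For uniqueness, suppose $\{C'_w\}$ and $\{\widetilde{C}_w\}$ are two bases satisfying (i) and (ii). I would argue by induction on $\ell(w)$ that $C'_w = \widetilde{C}_w$. The difference $D_w := C'_w - \widetilde{C}_w$ is $i$-invariant, has no $T_w$ term (since both have coefficient $q^{-\ell(w)/2}$), and, when expanded in the $T$-basis, each coefficient of $T_x$ (for $x<w$) is $q^{-\ell(w)/2}$ times a polynomial in $\mathbb{Z}[q]$ of degree at most $(\ell(w)-\ell(x)-1)/2$. On the other hand, applying $i$ and using the triangular expansion of $i(T_x)$ in the $T$-basis produces the leading behavior $q^{-\ell(w)/2}\,\overline{p(q^{-1})}\,T_x + \text{lower}$, so $i$-invariance forces each such polynomial to satisfy $p(q)=q^{(\ell(w)-\ell(x))/2}\cdot(\text{image under }q\mapsto q^{-1})$ modulo contributions from strictly smaller $y<x$. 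The degree bound then forces $p(q)\equiv 0$, giving $D_w=0$.

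For existence, again I would induct on $\ell(w)$. Given $w$ with $\ell(w)\geq 1$, choose $s_i$ with $\ell(s_i w)<\ell(w)$, and consider the product
\begin{equation*}
A_w := C'_{s_i}\cdot C'_{s_i w}.
\end{equation*}
This is $i$-invariant because $i$ is an anti-involution fixing each factor (and one verifies the product of $i$-invariant elements here remains invariant due to the commutation structure used, or one works with $C'_{s_iw}\cdot C'_{s_i}$ as needed). Expanding $A_w$ in the $T$-basis using the Hecke relations yields $q^{-\ell(w)/2}T_w$ plus a $\mathbb{Z}[q^{1/2},q^{-1/2}]$-linear combination of $T_x$ with $x<w$. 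One then corrects $A_w$ by subtracting $i$-invariant elements $\mu(x,w)\,C'_x$ (already constructed for $x<w$ by induction) to force every $T_x$-coefficient to satisfy the required degree bound. The coefficients $\mu(x,w)$ are determined explicitly, in the spirit of Kazhdan and Lusztig's original argument, by extracting the ``high-degree part'' of the $T_x$-coefficient that violates the bound; $i$-invariance of what we subtract ensures $C'_w := A_w - \sum_{x<w} \mu(x,w)\,C'_x$ remains $i$-invariant, while the construction arranges the degree condition.

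The main obstacle is the existence step: one must show that the high-degree corrections $\mu(x,w)$ actually belong to $\mathbb{Z}[q^{1/2},q^{-1/2}]$ (in fact they turn out to be nonnegative integer combinations of $q^{1/2}$-powers) and that after subtraction the degree bound holds uniformly. This is a delicate bookkeeping using $i$-invariance of $A_w$: the $i$-invariance imposes a functional equation on each coefficient polynomial which, combined with the inductive degree bounds on $P_{y,x}$ for $y<x<w$, restricts the violating ``top coefficient'' to exactly the form one can cancel by subtracting a scalar multiple of $C'_x$. Once this is verified, uniqueness guarantees that the resulting $C'_w$ does not depend on the auxiliary choice of $s_i$.
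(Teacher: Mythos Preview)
The paper does not give its own proof of this theorem; it is stated with attribution to Kazhdan and Lusztig and followed immediately by examples. So there is no ``paper's proof'' to compare against. Your sketch is essentially the original Kazhdan--Lusztig argument (induction on length, uniqueness from the degree bound plus bar-invariance, existence by multiplying $C'_{s_i}C'_{s_iw}$ and subtracting lower $C'_x$'s to kill the offending top-degree terms), so in that sense you are on the same track as the cited source.

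There is, however, one genuine error in your preliminaries that propagates into unnecessary contortions later. The bar involution $i$ is a ring \emph{homomorphism}, not an antihomomorphism: for a reduced expression $w=s_{i_1}\cdots s_{i_p}$ one has
\[
i(T_w)=i(T_{i_1})\cdots i(T_{i_p})=T_{i_1}^{-1}\cdots T_{i_p}^{-1}=(T_{i_p}\cdots T_{i_1})^{-1}=(T_{w^{-1}})^{-1},
\]
consistent with the paper's definition. Consequently the product $C'_{s_i}\,C'_{s_iw}$ of $i$-invariant elements is automatically $i$-invariant, with no need for your hedge about ``commutation structure'' or switching to $C'_{s_iw}\,C'_{s_i}$. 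Fixing this also clarifies the existence step: the correction coefficients $\mu(x,s_iw)$ are honest integers (the $\mu$-coefficients of the inductive $P$'s), not general Laurent polynomials, and the subtraction $C'_{s_i}C'_{s_iw}-\sum_{x<w}\mu(x,s_iw)C'_x$ is then visibly $i$-invariant with the correct degree bounds. Your uniqueness sketch is correct in outline but would benefit from making the ``functional equation plus degree bound forces zero'' step explicit by downward induction on $x$.
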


The basis $\{C'_{w} \colon w \in S_n\}$ is called  the \textit{Kazhdan-Lusztig
  basis} for $\mathcal{H}$, and $P_{x,w}(q)$ is the
\textit{Kazhdan-Lusztig polynomial} for $x,w\in S_n$.  This theorem
easily generalizes to all Coxeter groups for the reader familiar with 
that topic.  

\begin{ex}
  We exhibit some computations with the Kazhdan-Lusztig basis indexed by
  permutations with the aid of Theorem \ref{Kazhdan-Lusztig basis}.
  First, it is easy to see
\begin{align*}
C'_{s_{i}} &= q^{-\frac{1}{2}} (1+ T_{i}) = q^{\frac{1}{2}} (1+ T_{i}^{-1}). 
\end{align*}
Then, for $i \neq j$, the computation 
\begin{align*}
C'_{s_{i}}C'_{s_{j}} &= q^{-1} (1+ T_{i})(1+ T_{j})= q^{-1} (1 + T_{i} + T_{j} + T_{i}T_{j})
\end{align*}
shows that $C'_{s_{i} s_{j}}=C'_{s_{i}}C'_{s_{j}}$ for $i \neq j$.
Also, in the computation
\begin{align*}
C'_{s_{1}}C'_{s_{2}}C'_{s_{1}} &= q^{-\frac{3}{2}} (1+ T_{1})(1+ T_{2}) (1+ T_{1}) \\
                &= q^{-\frac{3}{2}} (1 + 2T_{1} + T_{2} + T_{1}T_{2} + T_{2}T_{1} + 
T_{1}^{2} + T_{1}T_{2}T_{1})\\
	        &= q^{-\frac{3}{2}} (1 + 2T_{1} + T_{2} + T_{1}T_{2} + T_{2}T_{1} 
                    +((q-1) T_{1} + q ) + T_{1}T_{2}T_{1}),
\end{align*}
one notices that $qT_{1} + q$ which comes from $T_1^2$ should not
appear for $C'_{s_{1}s_{2}s_{1}}$ because the degree of the polynomial
coefficient of $T_1$ and $T_{id}$ are too large.  We need a correction
term.  Since $C'_{s_{i}} = q^{-\frac{1}{2}} (1+ T_{i})$ one can check
that $C'_{s_{1}s_{2}s_{1}} = C'_{s_{1}}C'_{s_{2}}C'_{s_{1}} -
C'_{s_{1}}$ by Theorem~\ref{Kazhdan-Lusztig basis}. 
\end{ex}

\begin{ex}
 If $i_1,\cdots,i_k\in [n-1]$ are distinct, then one can
check that
\begin{align*}
C'_{s_{i_1}\cdots s_{i_k}} = C'_{s_{i_1}} \cdots C'_{s_{i_k}}.
\end{align*}
More generally, a permutation $w \in S_{n}$ is called \textit{Deodhar}
if $C'_{w}=C'_{s_{i_{1}}} C'_{s_{i_{2}}}\cdots C'_{s_{i_{p}}} $ for
some reduced expression $w=s_{i_{1}}s_{i_{2}}\cdots s_{i_{p}}$.  We
will return to the Deodhar permutations in Property 6.
\end{ex}

\begin{ex}
  The Kazhdan-Lusztig polynomials $P_{\mathrm{id}, w}$ for $w\in S_5$
  are completely determined from the following table and the fact that
  $P_{\mathrm{id}, w}=1$ if and only if $w$ is 3412 and 4231
  avoiding.  
\begin{center}
\begin{tabular}{|c|c|}\hline
$w$ &  $P_{\mathrm{id},w}$\\
\hline
$\displaystyle
\begin{array}{lll}
(1 4 5 2 3)&
(1 5 3 4 2)&
(2 4 5 1 3)\\
(2 5 3 4 1)&
(3 4 1 2 5)&
(3 4 1 5 2)\\
(3 5 1 2 4)&
(3 5 1 4 2)&
(3 5 2 4 1)\\
(3 5 4 1 2)&
(4 1 5 2 3)&
(4 2 3 1 5)\\
(4 2 3 5 1)&
(4 2 5 1 3)&
(4 2 5 3 1)\\
(4 3 5 1 2)&
(4 5 1 3 2)&
(4 5 2 1 3)\\
(5 1 3 4 2)&
(5 2 3 1 4)&
(5 2 4 1 3)\\
(5 2 4 3 1)&
(5 3 1 4 2)&
(5 3 2 4 1)\\
(5 3 4 2 1)&
(5 4 2 3 1)&
\end{array}$ &  $q+1 $\\
\hline
$ \displaystyle \begin{array}{ll}
(3 4 5 1 2)&
(4 5 1 2 3) \\
(4 5 2 3 1)&
(5 3 4 1 2)
\end{array} $&  $2q + 1$\\
\hline
$ \displaystyle  \begin{array}{l}
(5 2 3 4 1)
\end{array}$ &  $ q^2 + 2q + 1$\\
\hline
$\begin{array}{l}
(4 5 3 1 2)
\end{array}$ &  $q^2 + 1$\\
\hline
\end{tabular}
\end{center}
\end{ex}

\bigskip

The reader might notice that all coefficients of Kazhdan-Lusztig
polynomials shown so far are non-negative integers.  In their 1979
paper, this property was stated as a conjecture for all
Kazhdan-Lusztig polynomials. In 1980, Kazhdan and Lusztig proved their
own conjecture using \textit{intersection homology} as introduced by
Goresky and MacPherson in 1974, see \cite{g-m} as a good starting
point for that theory.

\begin{thm}
\emph{(}Kazhdan-Lusztig \cite{K-L2}\emph{)}
If $W$ is a Weyl group or affine Weyl group then 
\[
P_{x,w}(q) = \sum \mathrm{dim} \mathcal{IH}_{x}^{i}(X_{w}) \ q^{i}.
\]
\end{thm}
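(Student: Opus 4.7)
The plan is to match two uniquely characterized objects: the Kazhdan--Lusztig polynomial $P_{x,w}(q)$ from Theorem~\ref{Kazhdan-Lusztig basis} and the polynomial
\[
Q_{x,w}(q) := \sum_i \dim \mathcal{IH}^{i}_{e_x}(X_w) \, q^{i}
\]
recording the stalk dimensions of the Goresky--MacPherson intersection cohomology sheaf $\mathrm{IC}(X_w)$ at the $T$-fixed point $e_x$ indexed by $x$. Since the defining conditions of $P_{x,w}$ in Theorem~\ref{Kazhdan-Lusztig basis} amount to a self-duality property plus a degree bound plus the normalization $P_{w,w}=1$, it suffices to verify these three properties for $Q_{x,w}$; uniqueness of the Kazhdan--Lusztig basis then forces $P_{x,w} = Q_{x,w}$.

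First I would fix the geometric setup: $X_w \subset G/B$ is projective and singular only along a union of smaller Schubert varieties $X_v$ (with $v\le w$), and each $X_v$ is $B$-stable, so $\mathrm{IC}(X_w)$ is $B$-constructible with respect to the Schubert cell stratification. This $B$-equivariance makes the stalk at any point of $C_x$ equal to the stalk at $e_x$, so $Q_{x,w}$ is well defined. The normalization $Q_{w,w} = 1$ follows from smoothness of $X_w$ along its open cell $C_w$, where $\mathrm{IC}(X_w)$ restricts to the constant sheaf shifted by $\dim X_w$.

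The central step is translating Verdier duality for $\mathrm{IC}(X_w)$ into the Kazhdan--Lusztig involution $i$ on $\mathcal{H}$. The idea is to organize the classes $[\mathrm{IC}(X_w)]$ in the Grothendieck group of $B$-constructible complexes on $G/B$ into an element of (a completion of) $\mathcal{H}$, using the convolution action of sheaves on $G/B$ coming from the Bott--Samuelson resolutions $Z_{s_{i_1}}\cdots Z_{s_{i_p}} \to X_w$. Pushforwards of constant sheaves on Bott--Samuelson varieties translate into products $T_{i_1}\cdots T_{i_p}$ in $\mathcal{H}$, while Verdier self-duality of $\mathrm{IC}(X_w)$ (up to the shift by $\dim X_w$) becomes the identity $i(C'_w)=C'_w$. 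The degree bound $\deg P_{x,w} \le (\ell(w)-\ell(x)-1)/2$ is read off from the perverse $t$-structure support conditions $\dim \mathrm{supp}\, \mathcal{H}^i(\mathrm{IC}(X_w)) < -i$ for $i > -\dim X_w$, applied to the cell $C_x$ of complex dimension $\ell(x)$.

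I expect the main obstacle to be the Verdier self-duality step: extracting from it precisely the statement $i(C'_w)=C'_w$ for the element of $\mathcal{H}$ defined by the classes $q^{-\ell(w)/2}\sum_x Q_{x,w}(q) T_x$ requires carefully identifying the trace of Frobenius (or the weight filtration on stalks) with the Hecke algebra variable $q$, so that Poincaré duality for $\mathrm{IC}$ matches the algebraic involution $T_w \mapsto (T_{w^{-1}})^{-1}$, $q \mapsto q^{-1}$. This compatibility is most cleanly handled by passing to $\ell$-adic sheaves over a finite field, computing stalks via point-counting on Schubert varieties, and invoking the Weil conjectures for purity of $\mathrm{IC}(X_w)$; once this dictionary is in place, the three defining properties of $P_{x,w}$ are verified for $Q_{x,w}$ and the theorem follows.
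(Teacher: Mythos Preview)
The paper does not prove this theorem; it is a survey and simply cites the result as due to Kazhdan and Lusztig \cite{K-L2}, so there is no proof in the paper to compare against.

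That said, your outline is essentially the original Kazhdan--Lusztig argument: characterize both $P_{x,w}$ and the intersection cohomology stalk polynomials $Q_{x,w}$ by the same three conditions (normalization, degree bound from the perverse support conditions, self-duality under the bar involution coming from Verdier duality/purity), and invoke uniqueness of the KL basis. Your identification of the delicate point --- matching Verdier self-duality with the Hecke involution via $\ell$-adic sheaves over a finite field and the Weil conjectures for purity --- is exactly where the real work lies in \cite{K-L2}, and you have correctly flagged it rather than glossed over it. So as a sketch of the standard proof your proposal is sound, but be aware that what you have written is an outline rather than a proof: each of the three verifications (especially the self-duality dictionary) requires substantial machinery that you have named but not carried out.
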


\begin{cor}
The coefficients of $P_{x,w}(q)$ are non-negative integers with constant term 1.
\end{cor}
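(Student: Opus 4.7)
The plan is to deduce both assertions directly from the geometric identity
\[
P_{x,w}(q) = \sum_{i \geq 0} \dim_{\mathbb{C}} \mathcal{IH}^{i}_{x}(X_w)\, q^{i}
\]
supplied by the Kazhdan--Lusztig theorem just cited. Once this identity is in hand, integrality and non-negativity are immediate: every coefficient on the right is the complex dimension of an intersection-cohomology stalk, hence a non-negative integer. No further argument is required for that half of the corollary.

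For the constant-term statement, I would split on whether $x = w$ or $x < w$. If $x = w$, then Theorem~\ref{Kazhdan-Lusztig basis}(ii) already records $P_{w,w}(q) = 1$, whose constant term is $1$. If $x < w$, the plan is to use the geometry of Schubert varieties: each $X_w$ is irreducible, being the Zariski closure of the single Schubert cell $C_w$, and its open dense stratum $C_w$ is smooth. Under the standard Goresky--MacPherson normalization of the intersection-cohomology complex of an irreducible variety, the degree-zero stalk $\mathcal{IH}^{0}_{x}(X_w)$ is one-dimensional at every point $x \in X_w$, and in particular at the $T$-fixed points indexed by $x \leq w$. Combined with the geometric identity, this yields $P_{x,w}(0) = 1$.

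A second, purely algebraic route to the constant-term statement runs inside the Hecke algebra: from properties (i) and (ii) of the Kazhdan--Lusztig basis in Theorem~\ref{Kazhdan-Lusztig basis} one derives the standard recursion expressing $P_{x,w}$ in terms of $P_{y,v}$ with smaller $\ell(v)$ or larger $\ell(y)$, and then checks by induction on $\ell(w) - \ell(x)$ that the recursion preserves the value at $q = 0$. The base case is $P_{w,w} = 1$, and the correction terms in the recursion that could affect lower-length $T_y$-coefficients enter with strictly positive powers of $q$, so the constant term is transported unchanged from $P_{w,w}$ to every $P_{x,w}$ with $x \leq w$.

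The main obstacle is the $x < w$ case of the constant-term claim: the geometric route depends on invoking the support/normalization axioms for IC complexes without redeveloping them here, while the algebraic route requires writing out the Kazhdan--Lusztig recursion carefully. Either way, the non-negativity half is trivial once the preceding theorem is granted, and all the real content sits in the one-dimensionality of $\mathcal{IH}^{0}_{x}(X_w)$ (or, equivalently, in the inductive step of the recursion).
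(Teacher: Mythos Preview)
Your proposal is correct and aligns with the paper's intent. The paper states this corollary without proof, treating it as an immediate consequence of the preceding Kazhdan--Lusztig theorem on intersection homology; your argument supplies precisely the details the paper omits. The non-negativity and integrality are indeed trivial once the coefficients are identified as dimensions, and your two routes to the constant-term claim (via the normalization of the IC complex on an irreducible variety, or via the standard recursion) are both standard and valid. The only remark is that the paper, being a survey, simply asserts the constant-term-$1$ fact as well known rather than singling out either justification; your geometric route is the more natural companion to the theorem being cited.
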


The big news in Kazhdan-Lusztig theory is the recent proof that all
Kazhdan-Lusztig polynomials for all Coxeter groups have non-negative
integer coefficients.  This proof is due to Elias and
Williamson~\cite{Elias-Williamson}.  They give an algebraic structure (Soergel bimodules)
which plays the same role as intersection homology of Schubert
varieties in the original proof.

As stated in Property 1, Kazhdan-Lusztig polynomials can be used to
determine smoothness of Schubert varieties (in type A).  There are
several other interesting properties of Kazhdan-Lusztig polynomials
that have emerged since they were defined in 1979.  We cover some of
them here and recommend the Wikipedia page \cite{wiki:kl} 
for a very nice survey.  
\begin{enumerate}
\item 
  In 1981, Beilinson--Bernstein, and independently
  Brylinski--Kashiwara, proved another important conjecture due to
  Kazhdan and Lusztig.  They showed that the multiplicities which
  appear when expressing the formal character of a Verma module in
  terms of the formal character for the corresponding simple highest
  weight module are determined by evaluating Kazhdan-Lusztig
  polynomials at $q=1$ (\cite{BeilBern, BryKash}).


\item 

  The coefficients of Kazhdan-Lusztig polynomials are increasing as
  one goes down in Bruhat order, while keeping the second index fixed.
  Specifically, if $x\leq y \leq w$, then $\displaystyle
  \mathrm{coef}_{q^{k}} P_{x,w}(q) \geq \mathrm{coef}_{q^{k}}
  P_{y,w}(q)$.  This monotonicity property was first published in 1988
  by Ron Irving \cite{Irving}.  Irving's proof is based on the socle
  filtration of a Verma module.  In 2001, Braden and MacPherson gave a
  different proof using intersection homology \cite[Cor. 3.7]{B-M}.

\item 
  Every polynomial with constant term 1 and nonnegative integer
  coefficients is the Kazhdan-Lusztig polynomial of some pair of
  permutations.  This is due to Patrick Polo, published in 1999
  \cite{polo}.  He gives an explicit construction of the pair of
  permutations for a given polynomial.  This was a surprising result
  because from the small data that we can compute, say for $n\leq 8$,
  the polynomials seem quite special.  They must get increasingly
  complex as $n$ grows.

\item Let $\mu (x,w) $ be the coefficient of
  $q^{\frac{\ell(w)-\ell(x)-1}{2}}$ in $P_{x,w}$.  Note, $\mu(x,w)$
  can be 0.  For $x,w \in S_{9}$,\ $\mu (x,w) \in \{0,1 \}$.
  MacLarnen and Warrington found an example in $S_{10}$ where $\mu
  (x,w) = 5$ \cite{MW02}.  Prior to their publication in 2003, this
  was referred to as the ``0-1 Conjecture for Kazhdan-Lusztig
  polynomials.''  This again demonstrates the increasing complexity
  as $n$ grows.  The reader might be wondering how anyone could have
  believed the 0-1 Conjecture after seeing Polo's theorem in (3).
  However, Polo's theorem does not contradict the 0-1 Conjecture because
  in his construction the length difference between $w$ and $x$ is
  large enough that the leading term in $P_{x,w}(q)$ is typically not
  the $\mu$-coefficient.

\item 
  There exists a formula for $P_{x,w}(q)$ which only depends on the
  abstract interval $[id,w]$ in Bruhat order.  See the work of du
  Cloux (2003) \cite{ducloux.2004}, Brenti (2004) \cite{brenti04} and
  Brenti-Caselli-Marietti (2006) \cite{BCM.2006}.  
\end{enumerate}

There are two interesting but difficult open problems in
Kazhdan-Lusztig theory.  There are many partial answers to these
questions in the literature, but we don't know of a complete source at
this time.   Perhaps there is a need for someone to start a wiki page.

\begin{ques}\label{q:lusztig.interval}(Lusztig)
  Can one compute $P_{x,w}(q)$ using only the abstract poset given by
  the interval $[x,w]$ in Bruhat order?  In other words, $P_{u,v}(q) =
  P_{x,y}$ whenever $[u,v]$ and $[x,y]$ are isomorphic as posets.
\end{ques}

\begin{ques}\label{q:kl.poly}
  Can one compute the coefficients of $P_{x,w}(q)$ by counting
  combinatorially defined objects?  
\end{ques}

\bigskip

The next pattern property connects the 3412 and 4231 patterns to the
determination of the singular locus of a Schubert variety.  Recall
from Section~\ref{geom of Schubert}, the singular locus of a Schubert
variety $X_w$ is a union of Schubert varieties $X_v$ with $v<w$.  Thus
to determine the irreducible components of the singular locus, we just
need to give the maximal permutations $v <w$ such that $v$ determines a
singular point in $X_w$.

\begin{PAP}\textit{\!\!
\emph{(}Billey-Warrington, Manivel, Kassel-Lascoux-Reutenauer, and
Cortez \cite{BW-sing, manivel, klr, cortez}\emph{)} 
$X_{v}$ is an irreducible component of the singular locus of $X_{w}$ if and only if
\[
v=w\cdot \text{(1-cycle permutation)}
\]
corresponding to a 4231 or 3412 or 45312 pattern from
Figure~\ref{f:sing.locus} such that the shaded region contains no
additional 1's except in the 45312 case where they must appear in the
central region in decreasing order.
Here $\circ$'s denote
  1's in $w$, $\bullet$'s denote 1's in $v$.
}
\end{PAP}

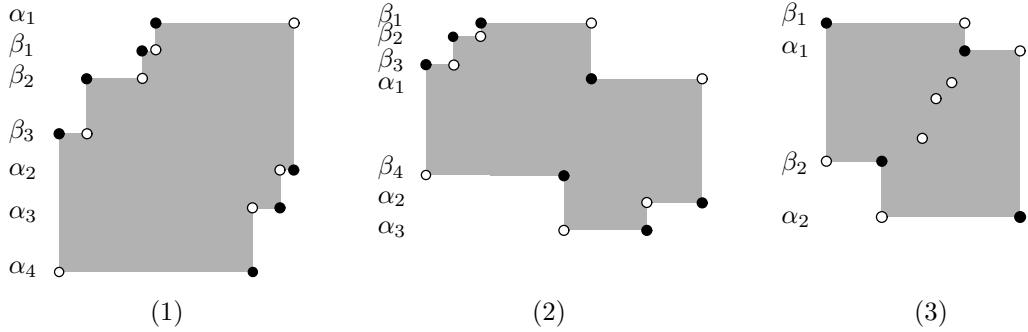
\begin{figure}[h]
\centering
\unitlength 0.1in
\begin{picture}( 53.2100, 15.8300)(  4.0000,-21.9000)
\put(11.4000,-23.6000){\makebox(0,0)[lb]{(1)}}%
\put(31.4000,-23.6000){\makebox(0,0)[lb]{(2)}}%
\put(51.4000,-23.6000){\makebox(0,0)[lb]{(3)}}%
%
\special{pn 8}%
\special{sh 0.300}%
\special{pa 1170 770}%
\special{pa 1894 770}%
\special{pa 1894 1540}%
\special{pa 1170 1540}%
\special{pa 1170 770}%
\special{ip}%
%
\special{pn 8}%
\special{sh 0.300}%
\special{pa 1098 916}%
\special{pa 1822 916}%
\special{pa 1822 1640}%
\special{pa 1098 1640}%
\special{pa 1098 916}%
\special{ip}%
%
\special{pn 8}%
\special{sh 0.300}%
\special{pa 808 1062}%
\special{pa 1532 1062}%
\special{pa 1532 1786}%
\special{pa 808 1786}%
\special{pa 808 1062}%
\special{ip}%
%
\special{pn 8}%
\special{sh 0.300}%
\special{pa 664 1350}%
\special{pa 1388 1350}%
\special{pa 1388 2074}%
\special{pa 664 2074}%
\special{pa 664 1350}%
\special{ip}%
%
\special{pn 8}%
\special{sh 0.300}%
\special{pa 954 1350}%
\special{pa 1678 1350}%
\special{pa 1678 2074}%
\special{pa 954 2074}%
\special{pa 954 1350}%
\special{ip}%
%
\special{pn 8}%
\special{sh 0.300}%
\special{pa 1098 1062}%
\special{pa 1822 1062}%
\special{pa 1822 1740}%
\special{pa 1098 1740}%
\special{pa 1098 1062}%
\special{ip}%
%
\special{pn 8}%
\special{sh 1.000}%
\special{ar 1170 770 26 26  0.0000000 6.2831853}%
%
\special{pn 8}%
\special{sh 1.000}%
\special{ar 1098 916 26 26  0.0000000 6.2831853}%
%
\special{pn 8}%
\special{sh 1.000}%
\special{ar 808 1062 26 26  0.0000000 6.2831853}%
%
\special{pn 8}%
\special{sh 1.000}%
\special{ar 664 1350 26 26  0.0000000 6.2831853}%
%
\special{pn 8}%
\special{sh 0}%
\special{ar 664 2074 24 24  0.0000000 6.2831853}%
%
\special{pn 8}%
\special{sh 1.000}%
\special{ar 1678 2074 24 24  0.0000000 6.2831853}%
%
\special{pn 8}%
\special{sh 0}%
\special{ar 1676 1738 26 26  0.0000000 6.2831853}%
%
\special{pn 8}%
\special{sh 1.000}%
\special{ar 1820 1738 26 26  0.0000000 6.2831853}%
%
\special{pn 8}%
\special{sh 0}%
\special{ar 1820 1540 26 26  0.0000000 6.2831853}%
%
\special{pn 8}%
\special{sh 1.000}%
\special{ar 1892 1540 26 26  0.0000000 6.2831853}%
%
\special{pn 8}%
\special{sh 0}%
\special{ar 1894 770 26 26  0.0000000 6.2831853}%
%
\special{pn 8}%
\special{sh 0.300}%
\special{pa 3306 1422}%
\special{pa 3740 1422}%
\special{pa 3740 1856}%
\special{pa 3306 1856}%
\special{pa 3306 1422}%
\special{ip}%
%
\special{pn 8}%
\special{sh 0.300}%
\special{pa 3596 1278}%
\special{pa 4030 1278}%
\special{pa 4030 1712}%
\special{pa 3596 1712}%
\special{pa 3596 1278}%
\special{ip}%
%
\special{pn 8}%
\special{sh 0.300}%
\special{pa 2872 770}%
\special{pa 3306 770}%
\special{pa 3306 1206}%
\special{pa 2872 1206}%
\special{pa 2872 770}%
\special{ip}%
%
\special{pn 8}%
\special{sh 0.300}%
\special{pa 2726 842}%
\special{pa 3162 842}%
\special{pa 3162 1278}%
\special{pa 2726 1278}%
\special{pa 2726 842}%
\special{ip}%
%
\special{pn 8}%
\special{sh 0.300}%
\special{pa 3596 1062}%
\special{pa 4030 1062}%
\special{pa 4030 1494}%
\special{pa 3596 1494}%
\special{pa 3596 1062}%
\special{ip}%
%
\special{pn 8}%
\special{sh 0.300}%
\special{pa 3306 1062}%
\special{pa 3740 1062}%
\special{pa 3740 1494}%
\special{pa 3306 1494}%
\special{pa 3306 1062}%
\special{ip}%
%
\special{pn 8}%
\special{sh 0.300}%
\special{pa 2584 988}%
\special{pa 3016 988}%
\special{pa 3016 1566}%
\special{pa 2584 1566}%
\special{pa 2584 988}%
\special{ip}%
%
\special{pn 8}%
\special{sh 0.300}%
\special{pa 2872 988}%
\special{pa 3306 988}%
\special{pa 3306 1566}%
\special{pa 2872 1566}%
\special{pa 2872 988}%
\special{ip}%
%
\special{pn 8}%
\special{sh 0.300}%
\special{pa 3016 770}%
\special{pa 3450 770}%
\special{pa 3450 1206}%
\special{pa 3016 1206}%
\special{pa 3016 770}%
\special{ip}%
%
\special{pn 8}%
\special{sh 0.300}%
\special{pa 4680 770}%
\special{pa 5404 770}%
\special{pa 5404 1494}%
\special{pa 4680 1494}%
\special{pa 4680 770}%
\special{ip}%
%
\special{pn 8}%
\special{sh 0.300}%
\special{pa 4970 916}%
\special{pa 5694 916}%
\special{pa 5694 1786}%
\special{pa 4970 1786}%
\special{pa 4970 916}%
\special{ip}%
%
\special{pn 8}%
\special{sh 1.000}%
\special{ar 2872 770 26 26  0.0000000 6.2831853}%
%
\special{pn 8}%
\special{sh 1.000}%
\special{ar 2726 842 24 24  0.0000000 6.2831853}%
%
\special{pn 8}%
\special{sh 1.000}%
\special{ar 2584 988 26 26  0.0000000 6.2831853}%
%
\special{pn 8}%
\special{sh 1.000}%
\special{ar 3450 1062 26 26  0.0000000 6.2831853}%
%
\special{pn 8}%
\special{sh 1.000}%
\special{ar 3740 1856 26 26  0.0000000 6.2831853}%
%
\special{pn 8}%
\special{sh 1.000}%
\special{ar 4030 1712 26 26  0.0000000 6.2831853}%
%
\special{pn 8}%
\special{sh 0}%
\special{ar 3450 770 26 26  0.0000000 6.2831853}%
%
\special{pn 8}%
\special{sh 0}%
\special{ar 2584 1566 24 24  0.0000000 6.2831853}%
%
\special{pn 8}%
\special{sh 0}%
\special{ar 3306 1856 26 26  0.0000000 6.2831853}%
%
\special{pn 8}%
\special{sh 0}%
\special{ar 4030 1062 26 26  0.0000000 6.2831853}%
%
\special{pn 8}%
\special{sh 0}%
\special{ar 5694 916 26 26  0.0000000 6.2831853}%
%
\special{pn 8}%
\special{sh 0}%
\special{ar 5404 770 26 26  0.0000000 6.2831853}%
%
\special{pn 8}%
\special{sh 0}%
\special{ar 4680 1494 26 26  0.0000000 6.2831853}%
%
\special{pn 8}%
\special{sh 0}%
\special{ar 4970 1786 28 28  0.0000000 6.2831853}%
%
\special{pn 8}%
\special{sh 1.000}%
\special{ar 4680 770 26 26  0.0000000 6.2831853}%
%
\special{pn 8}%
\special{sh 1.000}%
\special{ar 5694 1786 28 28  0.0000000 6.2831853}%
%
\special{pn 8}%
\special{sh 1.000}%
\special{ar 4970 1494 26 26  0.0000000 6.2831853}%
%
\special{pn 8}%
\special{sh 1.000}%
\special{ar 5404 916 26 26  0.0000000 6.2831853}%
%
\special{pn 8}%
\special{sh 0.300}%
\special{pa 2922 990}%
\special{pa 3354 990}%
\special{pa 3354 1568}%
\special{pa 2922 1568}%
\special{pa 2922 990}%
\special{ip}%
%
\special{pn 8}%
\special{sh 1.000}%
\special{ar 3306 1570 26 26  0.0000000 6.2831853}%
%
\special{pn 8}%
\special{sh 0}%
\special{ar 5182 1374 26 26  0.0000000 6.2831853}%
%
\special{pn 8}%
\special{sh 0}%
\special{ar 5254 1166 26 26  0.0000000 6.2831853}%
%
\special{pn 8}%
\special{sh 0}%
\special{ar 5336 1082 26 26  0.0000000 6.2831853}%
\put(4.0000,-7.7700){\makebox(0,0)[lb]{$\alpha_1$}}%
\put(4.0000,-9.4800){\makebox(0,0)[lb]{$\beta_1$}}%
\put(4.0000,-11.0900){\makebox(0,0)[lb]{$\beta_2$}}%
\put(4.0000,-13.9700){\makebox(0,0)[lb]{$\beta_3$}}%
\put(4.0000,-15.8600){\makebox(0,0)[lb]{$\alpha_2$}}%
\put(4.0000,-18.1000){\makebox(0,0)[lb]{$\alpha_3$}}%
\put(4.0000,-21.0000){\makebox(0,0)[lb]{$\alpha_4$}}%
\put(23.3000,-7.9000){\makebox(0,0)[lb]{$\beta_1$}}%
\put(23.3000,-8.8900){\makebox(0,0)[lb]{$\beta_2$}}%
\put(23.3300,-10.2800){\makebox(0,0)[lb]{$\beta_3$}}%
\put(23.3300,-11.3600){\makebox(0,0)[lb]{$\alpha_1$}}%
\put(23.3300,-15.8600){\makebox(0,0)[lb]{$\beta_4$}}%
\put(23.3300,-17.3000){\makebox(0,0)[lb]{$\alpha_2$}}%
\put(23.3300,-18.9100){\makebox(0,0)[lb]{$\alpha_3$}}%
\put(44.4500,-7.8600){\makebox(0,0)[lb]{$\beta_1$}}%
\put(44.4500,-9.4800){\makebox(0,0)[lb]{$\alpha_1$}}%
\put(44.4500,-15.6800){\makebox(0,0)[lb]{$\beta_2$}}%
\put(44.4500,-18.3800){\makebox(0,0)[lb]{$\alpha_2$}}%
%
\special{pn 8}%
\special{sh 0}%
\special{ar 1170 910 26 26  0.0000000 6.2831853}%
%
\special{pn 8}%
\special{sh 0}%
\special{ar 1100 1060 26 26  0.0000000 6.2831853}%
%
\special{pn 8}%
\special{sh 0}%
\special{ar 810 1350 26 26  0.0000000 6.2831853}%
%
\special{pn 8}%
\special{sh 0}%
\special{ar 2730 990 26 26  0.0000000 6.2831853}%
%
\special{pn 8}%
\special{sh 0}%
\special{ar 2870 840 26 26  0.0000000 6.2831853}%
%
\special{pn 8}%
\special{sh 0}%
\special{ar 3740 1710 26 26  0.0000000 6.2831853}%
\end{picture}%
\vspace{0.1in}
\caption{Patterns for the singular locus of $X_w$ in the 4231, 3412,
  and 45312 cases respectively.}\label{f:sing.locus}
\end{figure}

This result was found around 2000 by 7 authors in 4 papers, plus
Gasharov proved on direction of the conjecture \cite{gasharov.2001}
around the same time. It must have been ripe for discovery.  It
refined and proved a conjecture due to Lakshmibai and Sandhya
\cite{Lak-San}.  For the sake of history, we note that the authors of
\cite{BW-sing} were the first to report this result to Lakshmibai.

\begin{cor}
  The codimension of the singular locus of a Schubert variety $X_w$ is
  at least 3 for any $w \in S_n$.   
\end{cor}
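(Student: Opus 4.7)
My plan is to apply Pattern Avoidance Property 2 directly. By that property, every irreducible component of the singular locus of $X_w$ is a Schubert variety $X_v$ where $v=w\sigma$ and $\sigma$ is a single cycle determined by a 4231, 3412, or 45312 pattern in $w$ (subject to the shaded-region conditions of Figure~\ref{f:sing.locus}). Since the codimension of $X_v$ inside $X_w$ equals $\dim X_w-\dim X_v = \ell(w)-\ell(v)$, the task reduces to showing $\ell(w)-\ell(v)\geq 3$ in each of these three cases.

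The 4231 case is the simplest. Here $v$ differs from $w$ only at the four positions $i_1<i_2<i_3<i_4$ of the pattern: the values at those positions are rearranged from relative order $4231$ to relative order $2143$. The hypothesis that the shaded region of Figure~\ref{f:sing.locus}(1) contains no additional 1's ensures that no inversion between one of these four positions and an outside position is affected by $\sigma$, so the length change is entirely accounted for by inversions within the pattern. Consequently $\ell(w)-\ell(v) = \ell(4231)-\ell(2143) = 5-2 = 3$. The 3412 case is handled identically, with $v$ carrying a $1324$ subpattern in place of the $3412$ subpattern of $w$, yielding $\ell(w)-\ell(v) = \ell(3412)-\ell(1324) = 4-1 = 3$.

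The main obstacle is the 45312 case, where $\sigma$ is a 5-cycle and the shaded-region condition permits arbitrarily many additional 1's in the central region arranged in decreasing order. To handle this, I would read off $v=w\sigma$ explicitly from Figure~\ref{f:sing.locus}(3) and then bookkeep inversions carefully. The key observation is that each additional 1 in the decreasing central chain contributes the same number of inversions with respect to any position outside the pattern both before and after $\sigma$ is applied, and inversions among the central 1's themselves are likewise preserved (since the chain remains decreasing under $\sigma$). These contributions therefore cancel in the difference $\ell(w)-\ell(v)$, leaving only the inversion change from the five ``corner'' values of the $45312$ pattern, which a direct inversion count shows to be exactly $3$. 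Combining the three cases gives $\ell(w)-\ell(v)\geq 3$ for every irreducible component $X_v$, proving the codimension bound.
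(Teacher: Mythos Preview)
The paper does not give an explicit proof of this corollary; it is simply stated as a consequence of Pattern Avoidance Property~2. Your approach—reading off the components $X_v$ from Property~2 and bounding $\ell(w)-\ell(v)$ in each case—is exactly the intended one.

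There is, however, a genuine gap in your handling of the 4231 and 3412 cases. You assert that $v$ differs from $w$ only at the four positions of the underlying pattern, with the values rearranged as $4231\to 2143$ or $3412\to 1324$. But that is not what Property~2 says: look again at cases~(1) and~(2) of Figure~\ref{f:sing.locus}. In addition to the four ``$\alpha$--positions'' carrying the basic pattern, the 1-cycle $\sigma$ generally involves a chain of extra ``$\beta$--positions'' (the figure draws three of them as an illustration; the number varies with $w$). So $v=w\sigma$ can differ from $w$ at more than four places, and your inversion count as written does not apply. Ironically, you anticipated precisely this difficulty in the 45312 case, where you correctly argue that the decreasing central chain contributes nothing net to $\ell(w)-\ell(v)$; the same style of bookkeeping is needed for the $\beta$--chains in cases~(1) and~(2). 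Once you carry that out, you find that each additional $\beta$ only \emph{increases} the length drop, so the minimum value $\ell(w)-\ell(v)=3$ is attained exactly when there are no $\beta$'s, and the bound holds in general. But the argument as you have written it establishes only this minimal instance and is therefore incomplete.
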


The corollary is in fact true for all simply laced types. However, it
is not true in type $B_n$.  The codimension of the singular locus of a
Schubert variety in that case can be 2.

Inspired by the Lakshmibai-Sandhya Theorem and the construction of the
singular locus of a Schubert variety in Property 2, Woo and Yong
\cite{WooYong} defined the notion of interval pattern avoidance.
Given permutations $u <v \in S_m$ and $x<y \in S_n$ for $m<m$, say
$[u,v]$ \textit{interval pattern embeds} into $[x,y]$ provided
\begin{enumerate}
\item There exist indices $1\leq i_1< i_2 < \ldots < i_m \leq n$ such that
$\fl(x_{i_1}, \ldots, x_{i_m}) = u$  and $\fl(y_{i_1}, \ldots,
y_{i_m}) = v$ respectively.  
\item The permutations $x,y$ agree in all positions other than $1\leq
  i_1< i_2 < \ldots < i_m \leq n$.   
\item The Bruhat intervals $[u,v]$ and $[x,y]$ are isomorphic as
  posets.   
\end{enumerate}
In fact, if $x,y$ agree everywhere outside of the indices $1\leq i_1<
i_2 < \ldots < i_m \leq n$ and $u = \fl(x_{i_1}, \ldots, x_{i_m}),\ v=
fl(y_{i_1}, \ldots, y_{i_m})$ then $[u,v]$ interval embeds in $[x,y]$
if and only if $\ell(v)-\ell(u) = \ell(y)-\ell(x)$ \cite[Lemma
2.1]{WooYong}.  Furthermore, for all $w \in S_n$ such that $x<w<y$,
then $w$ agrees everywhere with $y$ outside of the sequence and
$[fl(w_{i_1}, \ldots, w_{i_m}), v]$ also interval embeds in $[x,y]$ \cite[Lemma
2.4]{WooYong}.   

Observe that the condition from Figure~\ref{f:sing.locus} that the
shaded region have no additional 1's in the permutation matrices
implies that the length $l(w)-l(v)$ is equal to the corresponding
length drop in each of the 4231, 3412 or 45312 cases.  Thus, the
maximal singular locus of a Schubert variety is determined by interval
pattern conditions.   

Another example of the power of interval pattern embeddings is the
following result supporting Question~\ref{q:lusztig.interval}.  More
examples will follow, but the reader is encouraged to see
\cite{WooYong} for more details.

\begin{thm}\cite[Cor. 6.3]{WooYong}\label{thm:woo.yong}  Suppose $[u,v] \subset S_m$
  interval pattern embeds into $[x,y] \in S_n$, then the
  Kazhdan-Lusztig polynomials $P_{u,v}(q) $ and $P_{x,y}(q) $ are
  equal.  
\end{thm}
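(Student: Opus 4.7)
The plan is to proceed by induction on the length difference $\ell(y) - \ell(x)$, which equals $\ell(v) - \ell(u)$ by definition of interval pattern embedding. The base case is immediate: when $\ell(y)=\ell(x)$, we have $y=x$ and $v=u$, so $P_{x,y}=P_{u,v}=1$ by Theorem~\ref{Kazhdan-Lusztig basis}.

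For the inductive step, the natural tool is the standard Kazhdan--Lusztig recursion: for any simple reflection $s$ with $sy < y$, one has
\begin{align*}
P_{x,y}(q) \;=\; q^{1-c}\, P_{sx,sy}(q) + q^{c}\, P_{x,sy}(q) \;-\; \sum_{\substack{x\leq z < sy \\ sz<z}} \mu(z,sy)\, q^{\frac{\ell(y)-\ell(z)}{2}} P_{x,z}(q),
\end{align*}
where $c=1$ if $sx<x$ and $c=0$ otherwise. The strategy is to show that the same recursion, term for term, computes $P_{u,v}$ on the smaller side. The essential input is Lemma~2.4 of Woo--Yong, which says that the map $w \mapsto w' := \fl(w_{i_1},\ldots,w_{i_m})$ is a poset isomorphism $[x,y] \xrightarrow{\sim} [u,v]$ and that for every $w \in [x,y]$ the interval $[w',v]$ interval-pattern-embeds into $[w,y]$.

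The case analysis is organized around where the simple reflection $s = s_k$ acts. First, if there exists an $s$ with $sy<y$ for which both positions $k$ and $k+1$ lie among the pattern indices $\{i_1,\ldots,i_m\}$ (necessarily consecutively, say $k=i_j$ and $k+1=i_{j+1}$), then one picks the matching simple reflection $s'=s_j$ in $S_m$; by the interval embedding, $s'v<v$, $[sx,sy]$ interval-pattern-embeds into $[s'u,s'v]$ (after adjusting $sx$ similarly), and the inductive hypothesis applied to each of $[sx,sy]$, $[x,sy]$, and each $[x,z]$ appearing in the sum matches the corresponding terms on the $[u,v]$ side. The $\mu$-coefficients agree by the inductive hypothesis applied to the strictly smaller intervals $[z,sy]$. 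Second, when $s$ acts on two non-pattern positions, since $x$ and $y$ agree outside the pattern positions, $sx$ and $sy$ still agree outside the pattern, and the interval embedding descends unchanged; the recursion then reduces both KL polynomials in parallel.

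The main obstacle, and where the proof requires real work, is the case where every valid descent of $y$ swaps a pattern position with a non-pattern position, so no clean matching exists. The way around this is to first reduce to the situation where the pattern positions $i_1, \ldots, i_m$ are \emph{consecutive} in $\{1,\ldots,n\}$. This reduction is itself done by induction: if some non-pattern index $j$ lies strictly between two pattern indices, one uses the fact that the entries of $y$ (equivalently $x$) at non-pattern positions are fixed across the interval to peel off $j$ via a sequence of adjacent transpositions that do not alter the isomorphism type of the interval, possibly applying an auxiliary lemma that equates KL polynomials under such ``cosmetic'' shifts. Once the pattern is consecutive, any descent of $v$ at position $j$ lifts to a simple reflection swap at positions $i_j, i_j+1$ in $y$, and Case~1 applies. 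Assembling these pieces yields $P_{x,y}(q) = P_{u,v}(q)$, completing the induction.
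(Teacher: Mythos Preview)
The survey paper does not contain a proof of this statement; it is simply quoted as \cite[Cor.~6.3]{WooYong}.  So there is nothing in the present paper to compare your argument against.  In Woo--Yong the result is \emph{not} obtained combinatorially via the Kazhdan--Lusztig recursion: it is a corollary of a geometric statement.  They prove that an interval pattern embedding $[u,v]\hookrightarrow[x,y]$ induces an isomorphism of the corresponding Kazhdan--Lusztig ideals (equivalently, of affine neighbourhoods of the relevant Schubert varieties), and the equality $P_{u,v}=P_{x,y}$ then falls out of the intersection-cohomology interpretation of KL polynomials.  Your approach is therefore genuinely different in spirit.

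As a standalone argument, however, your proposal has a real gap precisely at the point you flag as the ``main obstacle.''  The reduction to the case where the pattern positions $i_1,\ldots,i_m$ are consecutive is asserted but not carried out: you invoke ``a sequence of adjacent transpositions that do not alter the isomorphism type of the interval'' together with an unstated ``auxiliary lemma that equates KL polynomials under such `cosmetic' shifts.''  But that auxiliary lemma is essentially the theorem you are trying to prove, and poset isomorphism of Bruhat intervals alone is not known to force equality of KL polynomials (this is exactly the open Question~\ref{q:lusztig.interval} in the paper).  Even your Case~2 is problematic: when $s$ swaps two non-pattern positions there is no matching simple reflection on the $[u,v]$ side, so the two recursions do not run in parallel and the induction does not close.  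A purely combinatorial proof along these lines would need a substantive new ingredient to handle descents that straddle pattern and non-pattern positions; absent that, the argument does not go through.
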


\bigskip

Next, recall by a theorem due to Zariski that a variety $X$ is smooth if and only if the local
ring at every point is regular. 
A variety $X$ is \textit{factorial} at a point if the local ring at
that point is a unique factorization domain. Note that a smooth
variety is factorial at every point since any regular local ring is a
unique factorization domain.  The following property was conjectured by
Woo-Yong and proved by Bousquet-M\'elou and Butler in 2007.
\begin{PAP}
\textit{\!\!
\emph{(}Bousquet-M\'{e}lou-Butler \cite{BousquetMelou-Butler}\emph{)}
Let $w \in S_n$, then the following are equivalent.  
\begin{enumerate}
\item  The Schubert variety $X_{w}$ is factorial at every point.
\item The permutation  $w$ avoids $4231$ and $3\underline{41}2$ where $3\underline{41}2$
  means that the $4$ and $1$ must be adjacent in the one-line notation
  for $w$.
\item The permutation $w$ avoids $4231$, and for every $v<w$ differing
  in exactly 4 positions, the interval $[v,w]$ is not isomorphic to
  $[3142,3412]$.  Thus, one says $w$ interval avoids $[3142,3412]$.  
\end{enumerate}
 }
\end{PAP}

Compare the generating function below with
Corollary~\ref{cor:smooth.perms} which is the generating function for
the number of smooth Schubert varieties in $\flags$.

\begin{thm}\cite{BousquetMelou-Butler}  
There is a closed form for the generating function for the sequence $f_{n}$ counting the 
factorial Schubert varieties for $w \in S_n$:
\begin{align}\label{gf:factorial}
F(t) &= 
\frac{(1-t)(1-4t-2t^{2})-(1-5t)\sqrt{(1-4t)}}{2(1-5t+2t^2-t^3)} \\ \notag
 &= t + 2t^2 + 6t^3 + 22t^4 + 89t^5 + 379t^6 + 1661t^7 + 7405t^8 + \ldots .
\end{align}
\end{thm}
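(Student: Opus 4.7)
The plan is to enumerate the permutations $w \in S_n$ that avoid both the classical pattern $4231$ and the vincular pattern $3\underline{41}2$ (from the characterization in Pattern Avoidance Property 3), and then extract the closed form via the kernel method.

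First I would establish a structural/recursive decomposition of these ``forest-like'' permutations. A natural starting point is to classify $w$ by the position and value of a distinguished entry, such as the position of $n$ in the one-line notation, or equivalently the position of $1$. Avoidance of $4231$ forces that once $n$ is placed at position $k$, the values appearing to the left of $n$ and to the right of $n$ must satisfy a compatibility condition: no triple $(a,b,c)$ with $a > b$, $b < c$ to the right of $n$ can lie below a larger value to the left of some intermediate entry. Avoidance of $3\underline{41}2$ imposes the further \emph{adjacency} constraint: whenever a large value is immediately followed by a small value, the pair cannot simultaneously be flanked by a smaller value on the left and a larger value on the right that form a $3{-}2$ pattern. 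The goal of this step is to package both constraints into a recursive description of a typical factorial permutation as a factorial permutation on a smaller alphabet together with a combinatorial insertion operation.

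Second, I would translate the decomposition into a functional equation for a refined generating function $F(t, u)$ with $F(t) = F(t,1)$, where the catalytic variable $u$ marks a statistic (for instance the value in position $1$, or the number of elements to the left of $n$) needed to make the insertion rules well-defined. The vincular constraint means that simply tracking cardinalities is insufficient, so the key is to pick a catalytic statistic that correctly handles the adjacency in $3\underline{41}2$. I would expect an equation of the form
\begin{equation*}
K(t,u)\, F(t,u) \;=\; A(t,u) + B(t,u)\, F(t,1) + C(t,u)\, F(t,u_0),
\end{equation*}
for explicit rational functions $K,A,B,C$ and possibly a boundary specialization $F(t,u_0)$.

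Third, I would solve this functional equation by the kernel method: substitute the algebraic root $u = U(t)$ of the kernel $K(t,u)=0$ to eliminate $F(t,U(t))$, obtaining a relation determining $F(t,1)$. The appearance of $\sqrt{1-4t}$ in the stated formula is the fingerprint of exactly this type of quadratic kernel, and the cubic denominator $1-5t+2t^2-t^3$ should emerge as the resultant of the kernel with the auxiliary equations produced by the decomposition. After rationalizing, the claimed closed form for $F(t)$ follows.

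The main obstacle, as in the companion enumeration of smooth permutations in Corollary~\ref{cor:smooth.perms}, is setting up a decomposition that is genuinely compatible with the \emph{vincular} pattern $3\underline{41}2$. Classical pattern avoidance is stable under many insertion and flattening operations, but the adjacency requirement between the ``4'' and the ``1'' breaks these operations: removing an entry can destroy or create adjacencies. Consequently, choosing the correct catalytic statistic so that the recursion is both closed and linear in $F(t,u)$ is the crux of the argument. Once that is in place, the algebraic solution via the kernel method is essentially mechanical and yields the stated rational-plus-radical expression.
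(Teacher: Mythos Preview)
The paper itself does not prove this theorem; it is a survey article, and the result is simply quoted from Bousquet-M\'elou and Butler \cite{BousquetMelou-Butler} without argument. So there is no proof in the paper to compare against.

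That said, your outline is methodologically on target: the original Bousquet-M\'elou--Butler argument does proceed by a recursive decomposition of the forest-like permutations, a functional equation in a catalytic variable, and the kernel method, and the $\sqrt{1-4t}$ is indeed produced by a quadratic kernel. However, what you have written is a plan rather than a proof. None of the three steps is actually carried out: you do not specify which decomposition to use, you do not write down the functional equation (only its general shape), and you do not perform the kernel substitution or the algebra that produces the specific numerator and the cubic $1-5t+2t^{2}-t^{3}$. In particular, your own final paragraph correctly identifies the crux---finding a catalytic statistic compatible with the vincular adjacency in $3\underline{41}2$---but then leaves it unresolved. Until that choice is made and the resulting equation is written and solved, the proposal does not establish the stated closed form; it only asserts that such a derivation should exist.
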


Note, the term $\sqrt{1-4t}$ appears in both \eqref{gf:haiman} and
\eqref{gf:factorial}.  This term is familiar in combinatorics because
it also appears in the generating function for the Catalan numbers,
$c_n=\frac{1}{n+1}\abechs{2n}{n}$.  In particular, as a power series
$\sqrt{1-4t} = 1+\sum_{n\geq 1} \frac{-2}{n} \abechs{2n-2}{n-1}t^{n} $
by Newton's generalized binomial theorem.  Thus, the generating
function for the Catalan numbers is
$$
\sum_{n\geq1} c_n t^n  = \frac{1-\sqrt{1-4t}}{2t}.  
$$

\bigskip
There exists a simple criterion for characterizing
Gorenstein Schubert varieties using modified pattern avoidance.
Recall that a variety $X$ is \textit{Gorenstein} if it is Cohen-Macaulay and its canonical sheaf is a line bundle. 
Woo and Yong characterized the Gorenstein condition by using  pattern avoidance.
\begin{PAP}
\textit{\!\!
\emph{(}Woo-Yong \cite{w-y}\emph{)}
A Schubert variety $X_{w}$ is Gorenstein if and only if the following two conditions are satisfied :
\begin{itemize}
\item[(i)] $w$ avoids $35142$ and $42513$ \textit{with Bruhat restrictions}  
$\{t_{15},t_{23} \}$ and $\{t_{15}, t_{34} \}$, and 
\item[(ii)] for each descent $d$ in $w$, the associated partition
$\lambda_{d}(w)$ has all of its inner corners on the same
antidiagonal.
\end{itemize}
 Later, Woo-Yong \cite[Thm. 6.6]{WooYong} also gave a
characterization of Gorenstein Schubert varieties in terms of an
interval pattern avoidance using an infinite number of intervals.   }
\end{PAP}

We note that in the paper \cite{w-y}, the theorem states that $w$
should avoid $31542$ and $24153$ which is twisted by $w_{0}$ from the
permutations written above.  The difference is that they are labeling
Schubert varieties in such a way that the codimension of $X_{w}$ is
$\ell(w)$ which works better for computing products of Schubert
classes.

 The proof of this result due to Woo and
Yong relates the Gorenstein property to Schubert classes for the flag
manifold and Monk's formula.  Since the topic of the conference in
Osaka is ``Schubert Calculus'', we outline this proof to show the
logical relationship.  The steps are due to Woo and Yong unless
otherwise mentioned.

\bigskip

\noindent
\textit{Sketch of proof.}
\begin{itemize}
\item[Step 1:]   Schubert varieties are all Cohen-Macaulay.  (Ramanathan, 1985)  

\item[Step 2:] Testing if $X_{w}$ is Gorenstein reduces to a
comparison using the Weil divisor class group and the Cartier class
group for $X_w$. (Brion, Knutson, Kumar) 

\item[Step 3:] The Weil divisor class group is generated by the set of
  all $[X_{v}] \in H^{*}(G/B)$ such that $v<w$ and
  $\ell(v)=\ell(w)-1$.  In this case we say $w$ covers $v$ in Bruhat
  order, denoted $v \lhd w$.  If $v \lhd w$, then $w=vt_{ij}$ but
  $t_{i,j}$ does not need to be an adjacent transposition.   

\item[Step 4:] The Cartier class group is generated by $[X_{w_{0}s_{i}}][X_{w}] $ and 
\[
[X_{w_{0}s_{i}}][X_{w}] = \sum [X_{v}]
\]
summed over all $v=wt_{ab}\colon a\leq i <b, \ \ell(v) = \ell(w)-1$ by
Monk's formula.  

\item[Step 5:]  The Schubert variety $X_{w}$ is Gorenstein if and only if there exists an integral
solution $(\alpha_{1}, \dots , \alpha_{n-1})$ to 
\[
\sum_{i=1}^{n-1} \alpha_{i}[X_{w_{0}s_{i}}][X_{w}]   =  \sum_{v\lhd w} [X_{v}].
\]
\end{itemize}
For the details of the proof, see \cite{w-y}. \qed

\bigskip

A Schubert variety $X_{w}(E_\bullet)$ is \textit{defined by inclusions} if it can be
described as the set of all flags $F_{\bullet}$ where $F_{i} \subset
E_{j}$ or $E_{i} \subset F_{j}$ for some collection of pairs $i,j$.

\begin{PAP}
\textit{\!\! 
\emph{(}Gasharov-Reiner \cite{GR2000}\emph{)}
A Schubert variety $X_w$ is defined by inclusions if and only if $w$ avoids 4231, 35142, 42513, 351624.   
}
\end{PAP}

The four patterns appearing in this property have two other
interesting and unexpected connections found using Tenner's Database
of Permutation Pattern Avoidance.

\begin{thm}
\emph{(}Hultman-Linusson-Shareshian-Sj\"ostrand \cite{HLSS}\emph{)}
The number of regions in the inversion arrangement for $w$ is at most
the number of elements below $w$ in Bruhat order.  The two quantities
are equal if and only if $w$ avoids 4231, 35142, 42513, 351624.
\end{thm}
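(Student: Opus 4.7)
My plan begins by translating both sides into statistics on the inversion graph $G_w = ([n], \mathrm{inv}(w))$. Since $\mathcal{A}_w$ is the graphical arrangement of $G_w$, the classical theorem of Greene--Zaslavsky identifies the number of regions with the number of acyclic orientations $a(G_w)$. Equivalently, regions are in bijection with the equivalence classes of $S_n$ under the relation $u \sim_w v$ iff $\mathrm{inv}(u) \cap \mathrm{inv}(w) = \mathrm{inv}(v) \cap \mathrm{inv}(w)$. Thus the theorem asks us to compare the number of such classes with $|[id,w]| = P_w(1)$.

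For the inequality $a(G_w) \leq |[id,w]|$, I would proceed by induction on $|\mathrm{inv}(w)|$ using deletion--contraction on $G_w$. For any edge $e = (i,j) \in \mathrm{inv}(w)$ such that $wt_{ij} \lessdot w$ is a Bruhat cover, one has $a(G_w) = a(G_w \setminus e) + a(G_w / e)$, which I would pair with a decomposition of $[id, w]$ that splits permutations $v \leq w$ by whether $v t_{ij} < v$ or not, matching the two recursions at each inductive step. To handle the equality characterization, I would invoke Item~(7) of Pattern Avoidance Property~1 as a base case: when $w$ avoids both $3412$ and $4231$, the polynomial identity $R_w(t) = P_w(t)$ already implies equality at $t=1$. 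The remaining ``if'' case is when $w$ contains $3412$ but avoids the four listed patterns; here one must show inductively that each $3412$ instance contributes equally to both counts, so that the recursion splits exactly. For the ``only if'' direction, I would exhibit, for each of $4231$, $35142$, $42513$, $351624$ appearing in $w$, an explicit local witness to strict inequality -- typically an extra element in $[id,w]$ unmatched by the class count, constructed inside the pattern and extended by preserving the positions outside it.

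The main obstacle is that the two sides of the inequality do not correspond via any natural bijection, so the proof must be global rather than local. The naive map $v \mapsto \mathrm{inv}(v) \cap \mathrm{inv}(w)$ from $[id,w]$ to equivalence classes is in general neither injective nor surjective: for $w = 1342$, the element $1324 \in [id,w]$ shares the empty intersection with the identity, while simultaneously the class determined by $S = \{(2,4)\}$ has no representative at all in $[id,w]$, yet the total counts coincidentally agree ($4 = 4$). Consequently, the inductive matching of the deletion--contraction recursion to the Bruhat decomposition cannot be achieved class-by-class, but must balance excesses on one side against deficits on the other, which makes the analysis of the ``if'' direction of equality (where $w$ may contain $3412$) the most delicate step. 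A useful structural hint is that the same four patterns reappear in Pattern Avoidance Property~5, characterizing Schubert varieties defined by inclusions (Gasharov--Reiner); this coincidence suggests that the equality condition has a geometric meaning in terms of the structure of $X_w$, which could provide a cleaner organizing principle for matching the two recursions.
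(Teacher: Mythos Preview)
The paper does not prove this theorem. This is a survey article, and the Hultman--Linusson--Shareshian--Sj\"ostrand result is simply quoted with a citation to \cite{HLSS}; no argument is given or sketched in the text. Consequently there is no proof in the paper to compare your proposal against.

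That said, a brief comment on your plan. Your translation of the region count to acyclic orientations of the inversion graph $G_w$ is correct and is indeed the starting point in \cite{HLSS}. However, your inductive scheme via deletion--contraction paired with a Bruhat decomposition is not how the original proof proceeds, and you have correctly identified why it is problematic: the natural map $v \mapsto \mathrm{inv}(v) \cap \mathrm{inv}(w)$ is neither injective nor surjective, so there is no local matching to exploit, and ``balancing excesses against deficits'' is not a proof strategy but a description of the difficulty. The actual argument in \cite{HLSS} bounds both quantities by a common intermediary and uses a careful analysis involving the structure of the Bruhat interval; the equality characterization is then reduced to the Gasharov--Reiner condition (your observation about the coincidence of patterns with Property~5 is exactly on target, and is not an accident). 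If you want to pursue this, you should consult \cite{HLSS} directly rather than attempt to reconstruct it from the survey.
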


Given a subset $S$ of $[n] \times [n]$, let $mat_q(n,S,r)$ be the
number of $n \times n$ matrices over $\mathbb{F}_q$ with rank $r$,
none of whose nonzero entries lie in $S$.  For example, if $S=\emptyset$, then
$$
mat_q(n,\emptyset,n) = q^{\abechs{n}{2}}(q-1)^n \prod_{i=1}^n (1+q+
\ldots + q^{i-1}) = q^{\abechs{n}{2}}(q-1)^n P_{w_0}(q)
$$
where $w_0$ is the longest element of $S_n$ and $P_{w_0}(q)$ is the
Poincar\'e polynomial for $X_{w_0}$.   

\begin{thm}
\emph{(}Lewis-Morales \cite{lewis.morales.2014}\emph{)}
Fix a permutation $w$ in
  $S_n$, and let $D(w)$ be its permutation diagram. We have that
  $$mat_q(n,D(w),n)/(q-1)^n = q^{n(n-1) - inv(w)} P_{w w_0 } (q^{-1})$$
  If and only if
  $w$ avoids 1324, 24153, 31524, and 426153 (the reverses of the
  patterns in Property 5).  
\end{thm}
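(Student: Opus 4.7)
The plan is to reformulate the identity through the involution $w \mapsto u := ww_0$. Under this reversal the four forbidden patterns $1324$, $24153$, $31524$, $426153$ on $w$ correspond to the four Gasharov-Reiner patterns $4231$, $35142$, $42513$, $351624$ on $u$ (Property 5), i.e., to $X_u$ being defined by inclusions. Using $\ell(u) = \binom{n}{2} - \ell(w)$, the identity to prove becomes
\[
\frac{mat_q(n, D(w), n)}{(q-1)^n} \;=\; q^{\binom{n}{2}} \sum_{v \le u} q^{\ell(u)-\ell(v)},
\]
an equality of polynomials in $q$ whose $q = 1$ specialization reads
\[
\lim_{q \to 1} \frac{mat_q(n, D(w), n)}{(q-1)^n} \;=\; |[\mathrm{id}, u]|.
\]

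For the ``only if'' direction, I would first establish combinatorially that the left-hand side above at $q = 1$ equals the number of regions of the inversion hyperplane arrangement of $u$. This can be done by a direct enumeration of invertible matrices with the prescribed zero support: by analyzing the leading $(q-1)^n$-behavior one recovers a characteristic polynomial evaluation, and hence (via Zaslavsky's theorem) the region count of the inversion arrangement of $u$. The Hultman-Linusson-Shareshian-Sj\"ostrand theorem stated immediately before the one to be proved then identifies the numerical equality with the pattern avoidance of $u$ (equivalently of $w$), so the full polynomial identity cannot hold if a forbidden pattern is present.

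For the ``if'' direction I would decompose $GL_n(\mathbb{F}_q)$ into Bruhat double cosets $B\pi B$ and compute, for each $\pi \in S_n$, the number of matrices in $B\pi B$ whose nonzero entries avoid $D(w)$. When $u = ww_0$ is defined by inclusions, Gasharov-Reiner's presentation of $X_u$ as an intersection of Schubert varieties cut out by inclusion conditions $F_i \subset E_j$ gives tight control over which Bruhat cells $B\pi B$ contain matrices satisfying the support condition and with what $q$-multiplicity. An induction on $n$ (or on $\ell(u)$), peeling off a simple reflection at a top descent of $u$ and reducing to a smaller instance via a parabolic projection, together with interval pattern embedding (Theorem~\ref{thm:woo.yong}) to propagate pattern avoidance through the reduction, should complete the argument.

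The main obstacle is the refined per-cell $q$-matching in the ``if'' direction. At $q = 1$ the HLSS theorem guarantees cardinality agreement, but the polynomial identity requires the Bruhat cell contributions to match term-by-term after a suitable reindexing by $\pi \leftrightarrow \pi w_0$ that reverses Bruhat order, and this must be verified without assuming smoothness: since ``defined by inclusions'' is strictly weaker than smoothness, the Oh-Postnikov-Yoo identification of the region polynomial with the Poincar\'e polynomial (Property 1, item 7) is not directly available, and the $q$-statistic match instead has to be extracted from the combinatorial structure of matrices in each restricted Bruhat cell.
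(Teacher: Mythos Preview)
The paper you are working from is a survey; it states the Lewis--Morales theorem with a citation but gives no proof, so there is no in-paper argument to compare against. What follows is an assessment of your sketch on its own terms.

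Your reformulation via $u = ww_0$ is correct and is indeed the right bridge to the Hultman--Linusson--Shareshian--Sj\"ostrand theorem: reversal sends the four patterns $1324$, $24153$, $31524$, $426153$ to the Gasharov--Reiner patterns, and the rewriting of the right-hand side as $q^{\binom{n}{2}}\sum_{v\le u} q^{\ell(u)-\ell(v)}$ is fine. The idea of detecting failure of the polynomial identity at $q=1$ via HLSS is also the right instinct for the ``only if'' direction.

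However, what you have written is a plan with two substantial gaps rather than a proof. First, the claim that the $q\to 1$ limit of $mat_q(n,D(w),n)/(q-1)^n$ equals the number of regions of the inversion arrangement of $u$ is doing all the work in your ``only if'' direction, and you have not established it; the phrase ``by analyzing the leading $(q-1)^n$-behavior one recovers a characteristic polynomial evaluation'' hides exactly the computation that needs to be carried out. In the Klein--Lewis--Morales and Lewis--Morales papers this connection passes through a nontrivial identification of $mat_q$ with a sum over the inversion arrangement (or equivalently through a formula of Hultman relating rook placements to the arrangement), and it is not a one-line Zaslavsky argument. Second, you yourself flag that the ``if'' direction is unresolved: the per-Bruhat-cell $q$-matching you propose is precisely the hard content of the theorem, and neither Gasharov--Reiner's geometric description of $X_u$ nor interval pattern embedding gives you this $q$-refinement for free. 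The actual Lewis--Morales argument proceeds by a bijective and recursive analysis specific to diagram complements, not by invoking the geometry of $X_u$ directly. As written, your proposal identifies the correct target statements but does not supply the mechanisms that prove them.
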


The theorem above was originally part of a more general conjecture by
Klein-Lewis-Morales.  We state the part that is still open.

\begin{conj}
  \emph{(}Klein-Lewis-Morales \cite[Conj. 5.1 and Conj
  6.6]{klein.lewis.morales.2013}\emph{)} Using the notation above,
  $mat_q(n, D(w), n)/(q-1)^n$ is a polynomial function of $q$ which
  is coefficient-wise less than or equal to
  $q^{\abechs{n}{2}-inv(w)}P_w(q)$.
\end{conj}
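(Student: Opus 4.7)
The plan is to first establish polynomiality of $mat_q(n, D(w), n)/(q-1)^n$ and then bound it coefficient-wise by matching a stratification of the matrix count against the Bruhat decomposition of $X_w$. For polynomiality, observe that the diagonal torus $(\mathbb{F}_q^*)^n$ acts freely on the set of rank-$n$ matrices by column rescaling, and this action preserves both the rank and the set of positions where entries vanish. Therefore $(q-1)^n$ divides $mat_q(n, D(w), n)$ as polynomials in $q$, and selecting a distinguished nonzero entry in each column as a representative produces an orbit count that is manifestly polynomial in $q$ with nonnegative integer coefficients.

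Next, I would stratify by the Schubert cell of the flag spanned by the columns. For each invertible $M$, let $\sigma(M) \in S_n$ denote the permutation such that the column-span flag of $M$ lies in $C_{\sigma(M)}$. The essential-set theory of Fulton identifies $X_w$ with the common zero locus of the minors corresponding to the essential set of $D(w)$, so any matrix $M$ with all entries in $D(w)$ vanishing automatically satisfies $\mathrm{dim}(E_i \cap F_j) \geq \mathrm{rk}(w)[i,j]$ for the flag $F_\bullet$ spanned by its columns; hence $\sigma(M) \leq w$. This yields the stratification
\[
\frac{mat_q(n, D(w), n)}{(q-1)^n} \;=\; \sum_{v \leq w} \frac{N_v(q)}{(q-1)^n},
\]
where $N_v(q)$ counts the invertible matrices avoiding $D(w)$ whose column-span flag lies in $C_v$.

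The heart of the argument is then the coefficient-wise inequality
\[
\frac{N_v(q)}{(q-1)^n} \;\leq\; q^{\binom{n}{2} - inv(w) + \ell(v)} \quad \text{for each } v \leq w.
\]
To establish this, I would write each contributing matrix uniquely in the form $M = t \cdot v \cdot u$ with $t$ in the diagonal torus and $u$ a unipotent upper triangular matrix: the $t$-parameters supply the factor $(q-1)^n$, the permutation $v$ carries no continuous data, and $u$ ranges over $\binom{n}{2}$ affine coordinates. The vanishing conditions coming from $D(w)$ then translate into polynomial constraints on the entries of $u$, cutting out a subvariety whose $\mathbb{F}_q$-point count one would bound above by $q^{\binom{n}{2} - inv(w) + \ell(v)}$. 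Summing this over $v \leq w$ and factoring gives $q^{\binom{n}{2} - inv(w)} \sum_{v \leq w} q^{\ell(v)} = q^{\binom{n}{2} - inv(w)} P_w(q)$, the claimed bound.

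The main obstacle will be this coefficient-wise bound on each $N_v(q)/(q-1)^n$, because the rank conditions defining $X_w$ need not cut transversally within a given Schubert stratum, so a naive parameter count is not directly controlled. The Lewis--Morales theorem shows that the intersection is transverse and equality is achieved exactly when $w$ avoids $1324, 24153, 31524$, and $426153$. For general $w$, a natural route is an induction on the number of such forbidden patterns contained in $w$, using Theorem~\ref{thm:woo.yong} and interval-pattern comparisons to reduce each newly appearing pattern to a controlled local correction, while verifying that each correction only decreases coefficients on the left-hand side. An alternative is to degenerate the $u$-parameter variety to a union of linear subspaces indexed by reduced words for $w$ in the spirit of a Deodhar decomposition, then count each subspace individually and sum.
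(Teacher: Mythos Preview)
This statement is a \emph{conjecture}, and the paper explicitly introduces it as ``the part that is still open.'' There is no proof in the paper to compare your proposal against; the authors are recording an unsolved problem, not resolving it. So the relevant question is not whether your approach matches the paper's, but whether it actually constitutes a proof. It does not, and you essentially say so yourself.

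Two specific gaps. First, your polynomiality argument is incomplete. The free $(\mathbb{F}_q^*)^n$-action shows only that $(q-1)^n$ divides $mat_q(n,D(w),n)$ as an integer for each prime power $q$; it does not show that $mat_q(n,D(w),n)$ is a polynomial in $q$ to begin with. Your claim that the orbit count is ``manifestly polynomial in $q$ with nonnegative integer coefficients'' is exactly what needs to be proved: for a general support set $S$, the function $q\mapsto mat_q(n,S,n)$ need not be polynomial at all, and polynomiality for $S=D(w)$ is part of the Klein--Lewis--Morales conjecture, not an input to it.

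Second, the heart of the matter is your asserted inequality $N_v(q)/(q-1)^n \leq q^{\binom{n}{2}-\mathrm{inv}(w)+\ell(v)}$ coefficient-wise, and here you do not have an argument. You correctly identify that the vanishing conditions from $D(w)$ do not cut transversally inside a general Schubert stratum, and your two suggested workarounds (induction on pattern occurrences via interval-pattern comparisons; a Deodhar-style degeneration of the $u$-parameter variety) are speculative. Neither is carried out, and there is no reason to expect either to go through without substantial new ideas. Indeed, the Lewis--Morales theorem you cite characterizes exactly when equality holds, and the content of the conjecture is precisely the remaining cases where the geometry is not transverse. What you have written is a reasonable sketch of why the conjecture is plausible and how one might organize an attack, but it is not a proof.
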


Recently, Albert and Brignall have shown that the enumeration of
Schubert varieties defined by inclusions has a nice generating
function and recurrence relation.  Once again, it is interesting to
compare this formula with \eqref{gf:haiman} and \eqref{gf:factorial}.   

\begin{thm} 
\emph{(}Albert-Brignall  \cite{albert.brignall}\emph{)} 
Let $f(n)$ be the number of permutations in $S_n$ which
  avoid 4231, 35142, 42513, and 351624.  Then, we have the generating
  function
\[
\sum f(n) t^n = \frac{1-3t-2t^2-(1-t-2t^2)\sqrt{1-4t}}{1-3t-(1-t+2t^2)\sqrt{1-4t}}.
\]
\end{thm}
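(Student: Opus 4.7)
The plan is to apply the substitution (wreath) decomposition for permutation classes, as developed by Albert and Atkinson, to the class $\mathcal{C}=\operatorname{Av}(4231,35142,42513,351624)$. Recall that every permutation is uniquely an inflation of a \emph{simple} permutation by nonempty intervals, so if one understands the simple permutations of $\mathcal{C}$ and which patterns propagate through inflations, then the generating function $f(t)=\sum_n f(n)t^n$ for $\mathcal{C}$ satisfies a functional equation coming from that decomposition.

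First I would establish a structure theorem for $\mathcal{C}$. The key step is to classify the simple permutations in $\mathcal{C}$: I expect to show that they form an easily described family (plausibly only $1$, $12$, $21$, together with the simples $2413$ and $3142$ and their inflations by trivial blocks, or a similarly controlled family). The patterns $4231$, $35142$, $42513$ and $351624$ are chosen so that each inflation of a ``bad'' simple realizes one of them, while the allowed simples cannot. Concretely, I would verify that $4231$ forbids any simple of length $\geq 4$ other than the separable simples built from $2413$ and $3142$, and check case by case against $35142$, $42513$, $351624$ that these remain admissible only when the inflating intervals themselves lie in $\mathcal{C}$ and are constrained to be sum- or skew-decomposable in restricted ways.

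Next I would translate the structural description into an equation for $f(t)$. Writing the inflation decomposition, one gets $f(t)=t+f_\oplus(t)+f_\ominus(t)+f_{\text{simp}}(t)$ where $f_\oplus,f_\ominus$ count direct/skew sums and $f_{\text{simp}}$ counts inflations of the nontrivial simples by members of $\mathcal{C}$ (or appropriate subclasses). Because the simple permutations are so constrained, this gives a finite system of algebraic equations relating $f$ to its sum/skew-indecomposable parts. Eliminating auxiliary series should produce a single polynomial equation of low degree satisfied by $f(t)$, and solving this quadratic (the $\sqrt{1-4t}$ in the answer makes a quadratic equation the natural outcome) yields the closed form
\[
f(t)=\frac{1-3t-2t^{2}-(1-t-2t^{2})\sqrt{1-4t}}{1-3t-(1-t+2t^{2})\sqrt{1-4t}}.
\]

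The main obstacle I anticipate is the classification of simples: one must rule out an infinite family of bad simples while identifying exactly which inflations of the good simples survive in $\mathcal{C}$. Showing that the $351624$ pattern (a simple itself) cannot be produced by inflating an admissible simple by admissible intervals requires a careful case analysis on the entries bracketing the inflated block. Once this classification is done, the algebra is mechanical; choosing the decomposition variables so that the resulting system is small and solvable by hand is the only remaining art.
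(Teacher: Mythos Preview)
The paper does not prove this theorem; it is a survey that simply quotes the result and attributes it to Albert and Brignall \cite{albert.brignall}. So there is no proof in the paper to compare your proposal against.

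That said, your overall strategy---substitution decomposition \`a la Albert--Atkinson, classifying the simple permutations in $\mathcal{C}=\operatorname{Av}(4231,35142,42513,351624)$, and then solving the resulting algebraic system---is exactly the right framework, and is in fact the method Albert and Brignall employ in their paper. The appearance of $\sqrt{1-4t}$ is indeed the signature of a quadratic relation coming from this kind of structural decomposition.

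However, as written your proposal is a plan rather than a proof, and some of the concrete assertions are off. The sentence ``$4231$ forbids any simple of length $\geq 4$ other than the separable simples built from $2413$ and $3142$'' is not correct: separable permutations contain \emph{no} simples of length $\geq 4$, and $\operatorname{Av}(4231)$ by itself still contains infinitely many simple permutations. It is precisely the additional basis elements $35142$, $42513$, $351624$ that cut the simples down to a tractable family, and identifying that family (together with the correct restrictions on the inflating blocks---they are not all of $\mathcal{C}$ in every slot) is the real content of the argument. You acknowledge this as ``the main obstacle,'' but you have not carried it out; until the simples are actually classified and the block constraints are pinned down, there is no system of equations to solve, and hence no derivation of the stated generating function.
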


Gasharov-Reiner give a nice description of the cohomology rings of
Schubert varieties defined by inclusions.  This result has been
extended by Reiner-Woo-Yong in a beautiful way which relates to
Fulton's essential set which is a subset of the diagram of a
permutation.  In order to describe it here, let us first recall
Carrell's result on the cohomology of Schubert varieties.
\begin{thm}
\emph{(}Carrell \cite{carrell92}\emph{)} 
$H^{*}(X_{w}) \approx H^{*}(G/B)/I_{w}$ where
$I_{w}$ is generated by all  $[X_v]$ such that $v \not \leq w$.  
\end{thm}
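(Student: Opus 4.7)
The plan is to study the restriction map $i^*\colon H^*(G/B) \to H^*(X_w)$ arising from the inclusion $i\colon X_w \hookrightarrow G/B$ and to establish three claims: (a) $i^*$ is surjective, (b) $I_w \subseteq \ker i^*$, and (c) the images of $\{[X_v]\colon v \leq w\}$ span $H^*(G/B)/I_w$. Since $\ker i^*$ is an ideal, (b) induces a surjection $H^*(G/B)/I_w \twoheadrightarrow H^*(X_w)$, and combining this with (a) and the dimension bound from (c) matched against $\dim H^*(X_w) = \#\{v \leq w\}$ will force the surjection to be an isomorphism.

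For (a), the Bruhat cell decomposition \eqref{prelim 100} realizes $X_w$ as a CW-subcomplex of $G/B$ with all cells even-dimensional. Consequently, the cohomologies of $G/B$, of $X_w$, and of the pair all vanish in odd degrees, so the long exact sequence of the pair degenerates into short exact sequences
\begin{equation*}
0 \to H^{2k}(G/B, X_w) \to H^{2k}(G/B) \to H^{2k}(X_w) \to 0,
\end{equation*}
immediately yielding the surjectivity of $i^*$.

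For (b), since $\ker i^*$ is a ring-theoretic ideal, it is enough to verify $i^*[X_v] = 0$ for each $v \not\leq w$. The cleanest route is via $T$-equivariant cohomology, using the fact (discussed earlier in the paper) that $X_w$ is a GKM space: its $T$-fixed points are the permutation flags $u_\bullet$ with $u \leq w$, and an equivariant class vanishes iff each of its fixed-point restrictions does. The equivariant Schubert class $[X_v]^T$ restricts to zero at $u_\bullet$ precisely when $u_\bullet \notin X_v$, equivalently when $v \not\leq u$. If $v \not\leq w$, then no $u \leq w$ can satisfy $v \leq u$, for otherwise transitivity of Bruhat order would give $v \leq w$. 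Hence every fixed-point restriction of $i_T^*[X_v]^T$ vanishes, so $i_T^*[X_v]^T = 0$, and forgetting the $T$-action yields $i^*[X_v] = 0$.

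Claim (c) is then immediate: any class in $H^*(G/B)$ is a $\mathbb{Z}$-linear combination of Schubert classes, and those indexed by $v \not\leq w$ die in $H^*(G/B)/I_w$ by definition of $I_w$. The heart of the argument is (b); an alternative geometric approach represents $[X_v]$ by a generic $G$-translate and checks that this representative is disjoint from $X_w$ when $v \not\leq w$, but the equivariant localization route makes the Bruhat-order criterion manifest and sidesteps transversality subtleties. The remaining steps are routine once (b) is in hand.
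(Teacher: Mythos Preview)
The paper is a survey and does not prove this theorem; it simply quotes Carrell's result with a citation to \cite{carrell92}. So there is no argument in the paper to compare yours against.

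Your overall strategy---surjectivity of $i^*$ from the even-cell structure, containment $I_w\subseteq\ker i^*$, and a rank count---is the standard one and is sound. However, step~(b) contains a slip you should fix. You write that $[X_v]^T$ restricts to zero at $u_\bullet$ ``precisely when $u_\bullet \notin X_v$, equivalently when $v \not\leq u$.'' With the paper's definition $X_v=\overline{BvB/B}$ one has $u_\bullet\in X_v\iff u\leq v$, so these two conditions are \emph{not} equivalent; and if you tried to run the argument with the condition $u\not\leq v$ it would fail, since for instance $u=\mathrm{id}$ lies in every $X_v$. The condition you go on to use, $v\not\leq u$, is the correct one, but it is the vanishing criterion for the class in degree $2\ell(v)$, namely the Poincar\'e dual of the \emph{opposite} Schubert variety $X^v=\overline{B^-vB/B}$: one has $[X^v]^T|_{u_\bullet}=0$ exactly when $u_\bullet\notin X^v$, i.e.\ when $v\not\leq u$. (This is the class the theorem intends; with the literal Poincar\'e dual of $X_v$ one would have $[X_{w_0}]=1\in I_w$, which is absurd.) Once the justification is corrected, your transitivity step---if $v\not\leq w$ then no $u\leq w$ can satisfy $v\leq u$---does the job. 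You should also state explicitly that passing from $i_T^*[\,\cdot\,]^T=0$ to $i^*[\,\cdot\,]=0$ uses equivariant formality of $X_w$, which again follows from the even-cell decomposition. Steps~(a), (c), and the final rank comparison are fine as written.
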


A permutation $x$ is called \textit{Grassmannian} if $x$ has at most 1
descent. Also, $x$ is \textit{bigrassmannian} if both $x$ and $x^{-1}$
are Grassmannian.  We denote by $\mathrm{Des}(x)$ the set of descents
in $x$. In 1992, Akyildiz-Lascoux-Pragacz gave a description of the
ideal $I_w$ which was then further refined by Reiner-Woo-Yong.

\begin{thm}
\emph{(}Akyildiz-Lascoux-Pragacz \cite{ALP}\emph{)} 
  $I_{w}$ is generated by the set of all $[X_v]$ such that
  $v \not \leq w$ and $v$ is Grassmannian.
\end{thm}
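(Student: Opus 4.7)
To prove this, I would establish the nontrivial containment $I_w \subseteq J$, where $J$ denotes the ideal of $H^{*}(G/B)$ generated by the Grassmannian Schubert classes $[X_u]$ with $u \not\leq w$. The reverse inclusion $J \subseteq I_w$ is automatic since every generator of $J$ is already one of the generators of $I_w$ supplied by Carrell's theorem.

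The strategy rests on two structural inputs. First, $H^{*}(G/B)$ is generated as a $\mathbb{Z}$-algebra by Grassmannian Schubert classes: for each $k$, the first Chern class $c_1(Q_k)$ of the tautological quotient bundle on the partial flag variety is a Grassmannian Schubert class (with descent at $k$ and one-box partition), and its differences across successive $k$ recover the Chern roots $x_i$ in the Borel presentation $H^{*}(G/B) = \mathbb{Z}[x_1,\ldots,x_n]/(e_1,\ldots,e_n)$. Second, Bruhat order on $S_n$ is determined by rank-table comparisons, and each witness $\mathrm{rk}(v)[i,j] < \mathrm{rk}(w)[i,j]$ to the failure of $v \leq w$ corresponds to a bigrassmannian permutation $u$ with $v \leq u$ while $u \not\leq w$, via the Lascoux--Sch\"utzenberger description of Bruhat order via bigrassmannian (equivalently, essential) permutations.

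With these inputs, I would work in the quotient $R_w := H^{*}(G/B)/J$ and show that the images of $\{[X_v] : v \leq w\}$ span $R_w$ as a $\mathbb{Z}$-module. Granted this, since these images are already linearly independent in the further quotient $H^{*}(G/B)/I_w = H^{*}(X_w)$ (by the Schubert basis theorem together with Carrell's theorem) and $J \subseteq I_w$, a rank comparison forces $J = I_w$. To prove the spanning, I would induct downward on $\ell(v)$ for $v \not\leq w$: writing $[X_v]$ as an integer polynomial in Grassmannian Schubert classes, any monomial that involves a Grassmannian factor $[X_u]$ with $u \not\leq w$ sits in $J$, while the remaining monomials involve only Grassmannian $[X_u]$ with $u \leq w$, which I would expand using Monk's and Pieri's formulas and then discard the Schubert summands $[X_{u'}]$ with $u' \not\leq w$ by the induction hypothesis.

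The main obstacle I anticipate is making the rank-failure step rigorous on the level of classes: showing that in any Grassmannian polynomial expression for $[X_v]$ with $v \not\leq w$, one cannot cancel all the Grassmannian witnesses $[X_u]$ with $u \not\leq w$ against classes indexed by $u \leq w$. The cleanest route is to pick a Grassmannian (indeed bigrassmannian) $u$ certifying the rank failure and explicitly exhibit $[X_v] = [X_u]\cdot \xi + \rho$ in $H^{*}(G/B)$ with $\rho$ a sum of Schubert classes of strictly larger length, reducing by the downward induction. Making this identity explicit requires a careful interplay between Fulton's essential set, Kempf--Laksov determinantal formulas, and Monk's formula, and it is here that the bulk of the combinatorial work lies.
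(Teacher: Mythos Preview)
The paper does not supply a proof of this theorem; it is stated with attribution to Akyildiz--Lascoux--Pragacz and cited from \cite{ALP}, so there is no argument in the text to compare against.

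Your overall architecture is sound: set $J$ to be the ideal generated by the Grassmannian classes $[X_u]$ with $u\not\leq w$, note $J\subseteq I_w$, and aim to prove equality by showing that the images of $\{[X_v]:v\leq w\}$ span $H^*(G/B)/J$, then conclude by a rank count. The ingredients you invoke (algebra generation by Grassmannian classes, the bigrassmannian description of Bruhat order) are the right ones.

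However, the inductive step as written does not close. Cohomology is graded, so any identity $[X_v]=[X_u]\cdot\xi+\rho$ forces $\rho$ to be homogeneous of the same degree as $[X_v]$; hence $\rho$ is a $\mathbb{Z}$-combination of classes $[X_{v'}]$ with $\ell(v')=\ell(v)$, not of ``strictly larger length.'' Your downward induction on $\ell(v)$ therefore has nothing to feed into the inductive hypothesis when you reach the residual term $\rho$. The same difficulty afflicts the earlier paragraph: expanding a monomial $[X_{u_1}]\cdots[X_{u_k}]$ in Grassmannian classes with all $u_i\leq w$ via Monk and Pieri produces Schubert classes $[X_{u'}]$ of the \emph{same} total degree, so again the induction does not move. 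What is actually needed is a statement of the form: for every $v\not\leq w$ there is a Grassmannian $u\leq v$ with $u\not\leq w$ such that $[X_v]$ lies in the principal ideal $([X_u])$; equivalently, that $([X_u])=\mathrm{span}_{\mathbb{Z}}\{[X_z]:z\geq u\}$. This is true, but it is not a triviality---it is essentially a dual of Carrell's theorem combined with positivity of the structure constants (and in the original ALP approach it comes from a direct computation with Schur and Schubert polynomials). You have correctly located the obstacle but not yet overcome it; either prove that lemma directly or restructure the induction so that it genuinely terminates.
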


Following \cite{RWY}, for a permutation $w$, let $E(w)$ be the set of
permutations which are minimal elements in Bruhat order in the
complement of the interval $[id,w]$.  The set $E(w)$ is called the
\textit{essential set} of $w$.  Clearly, this notion of essential set
generalizes to all Coxeter groups.

\begin{thm}
\emph{(}Lascoux-Sch\"utzenberger and  
Geck-Kim \cite{G-K}\emph{)} 
The elements in $E(w)$ are bigrassmannian.
\end{thm}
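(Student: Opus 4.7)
The plan is to reduce the essential-set condition to a single rank inequality and then use the minimality of $x$ to force both $x$ and $x^{-1}$ to have at most one descent.

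First, recall that Bruhat order on $S_n$ is characterized by the rank tables: $y \leq w$ if and only if $\mathrm{rk}(y)[i,j] \geq \mathrm{rk}(w)[i,j]$ for every pair $(i,j)$. Let $x \in E(w)$. Since $x \not\leq w$, I pick indices $i, j$ with $\mathrm{rk}(x)[i,j] < \mathrm{rk}(w)[i,j]$, and set $r = \mathrm{rk}(x)[i,j]$. Define
$$X(i,j,r) = \{y \in S_n : \mathrm{rk}(y)[i,j] \leq r\}.$$
Any $y \in X(i,j,r)$ satisfies $\mathrm{rk}(y)[i,j] \leq r < \mathrm{rk}(w)[i,j]$, so $y \not\leq w$. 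Thus $X(i,j,r)$ is contained in the complement of $[id,w]$, and since $x$ is minimal in that complement and belongs to $X(i,j,r)$, it is also minimal in $X(i,j,r)$.

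Next I claim that any minimal element $x$ of $X(i,j,r)$ is Grassmannian. Suppose $x$ has a descent at position $k$, so that $xs_k < x$. Using the formula $\mathrm{rk}(y)[i,j] = \#\{h \leq j : y(h) \leq i\}$ together with the fact that $xs_k$ differs from $x$ only in positions $k$ and $k+1$, I observe that if $k+1 \leq j$ or $k \geq j+1$ then the multiset of values in positions $\leq j$ is unchanged by the swap, giving $\mathrm{rk}(xs_k)[i,j] = \mathrm{rk}(x)[i,j] \leq r$. This would place $xs_k \in X(i,j,r)$, contradicting minimality of $x$. Hence every descent of $x$ must occur at $k = j$, so $x$ has at most one descent.

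For the inverse, apply the same argument to $x^{-1}$. The identity $\mathrm{rk}(x^{-1})[j,i] = \mathrm{rk}(x)[i,j]$, together with the invariance of Bruhat order under inversion, shows that $x^{-1}$ is a minimal element of $X(j,i,r)$. By the descent analysis above, $x^{-1}$ has at most one descent, located at position $i$, so $x$ is bigrassmannian. The main technical step is this descent analysis: the key observation is that the rank at $[i,j]$ is preserved by any adjacent transposition $s_k$ with $k \neq j$, forcing every descent of a minimal element of $X(i,j,r)$ to sit exactly at position $j$. The only bookkeeping needed beyond this is to verify the transpose formula $\mathrm{rk}(x^{-1})[j,i] = \mathrm{rk}(x)[i,j]$, which is immediate from the definitions.
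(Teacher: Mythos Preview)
Your argument is correct. The paper does not supply its own proof of this theorem; it simply records the result with attribution to Lascoux--Sch\"utzenberger and Geck--Kim, so there is no in-text proof to compare against. Your approach is the standard one: use the rank-table description of Bruhat order to reduce the obstruction $x\not\le w$ to a single inequality $\mathrm{rk}(x)[i,j]\le r<\mathrm{rk}(w)[i,j]$, observe that minimality of $x$ in the complement of $[id,w]$ forces minimality in the subset $X(i,j,r)$, and then check that right multiplication by $s_k$ with $k\ne j$ preserves $\mathrm{rk}(\,\cdot\,)[i,j]$, so any descent of $x$ away from $j$ would contradict minimality. The passage to $x^{-1}$ via the transpose identity $\mathrm{rk}(x^{-1})[j,i]=\mathrm{rk}(x)[i,j]$ and the order-preserving involution $y\mapsto y^{-1}$ is clean; it shows $x^{-1}$ is minimal in $X(j,i,r)$ and hence has its unique descent at $i$. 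One small stylistic remark: you might state explicitly that the inverse map carries $X(i,j,r)$ bijectively and order-preservingly onto $X(j,i,r)$, so that minimality of $x$ in the former immediately gives minimality of $x^{-1}$ in the latter; you have all the ingredients but the sentence ``apply the same argument to $x^{-1}$'' slightly elides this transfer step.
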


\begin{thm}
\emph{(}Reiner-Woo-Yong \cite{RWY}\emph{)} 
There exists a bijection between $E(w)$ and Fulton's essential set
which is defined as the cells in the diagram of the permutation $D(w)$
which have no cell directly to their right or below.   
\end{thm}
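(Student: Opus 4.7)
The plan is to exhibit an explicit bijection between Fulton's essential set $\mathrm{Ess}(w)$ and the set $E(w)$ using the rank-table description of Bruhat order. Recall that $v \leq w$ holds if and only if $\mathrm{rk}(v)[i,j] \geq \mathrm{rk}(w)[i,j]$ for every cell $(i,j)$; consequently a permutation $v$ fails to be below $w$ precisely when some blocking cell $(a,b)$ satisfies $\mathrm{rk}(v)[a,b] < \mathrm{rk}(w)[a,b]$. Elements of $E(w)$ are the permutations that achieve such a blockage minimally, and this parsimony forces them to carry their entire rank deficit at a single corner.

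I would then invoke the classical classification of bigrassmannian permutations: each one in $S_n$ is uniquely parameterized by a triple $(a, b, r)$ and realized as the smallest permutation in Bruhat order with $\mathrm{rk}[a,b] < r$. Using this, define the candidate bijection
\[
\Phi \colon \mathrm{Ess}(w) \longrightarrow E(w), \qquad \Phi(a,b) := v_{a,\, b,\, \mathrm{rk}(w)[a,b]}.
\]
By construction $\Phi(a,b)$ is bigrassmannian and $\Phi(a,b) \not\leq w$. To verify $\Phi(a,b) \in E(w)$, suppose $u < \Phi(a,b)$. Then $\mathrm{rk}(u)[i,j] \geq \mathrm{rk}(\Phi(a,b))[i,j]$ for all $(i,j)$ with strict inequality somewhere; the minimality of $\Phi(a,b)$ among permutations violating the rank condition at $(a,b)$ forces $\mathrm{rk}(u)[a,b] \geq \mathrm{rk}(w)[a,b]$, and on every other cell $\mathrm{rk}(u) \geq \mathrm{rk}(\Phi(a,b)) = \mathrm{rk}(w)$ by choice of the threshold. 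Hence $u \leq w$, so $\Phi(a,b)$ is a minimal blocker.

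For the inverse direction, start with $v \in E(w)$. The previous theorem of Lascoux--Sch\"utzenberger and Geck--Kim guarantees $v$ is bigrassmannian, so $v = v_{a,b,r}$ for a unique triple, and minimality forces $r = \mathrm{rk}(w)[a,b]$ exactly. The nontrivial content is then to prove $(a,b) \in \mathrm{Ess}(w)$: one must show $(a,b)$ lies in the diagram $D(w)$ and has no diagram cell immediately to its south or east.

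The main obstacle is precisely this last verification, which is the heart of the theorem. It rests on a redundancy principle for rank conditions: if a cell immediately south $(a+1,b)$ also lies in $D(w)$, then the universal staircase inequality $\mathrm{rk}(F)[a+1,b] - \mathrm{rk}(F)[a,b] \leq 1$, valid for any flag $F$, allows one to produce a strictly smaller bigrassmannian blocker with corner further south, contradicting the minimality of $v$. The symmetric argument handles the east neighbor $(a, b+1)$. Carrying out both redundancy reductions precisely, and checking the reverse implication that every $(a,b) \in \mathrm{Ess}(w)$ really does support a bona fide minimal blocker $\Phi(a,b) \in E(w)$ whose rank-drop corner is exactly $(a,b)$, constitute the technical crux of the proof.
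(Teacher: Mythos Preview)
The paper you are working from is a survey; it states this theorem with attribution to Reiner--Woo--Yong \cite{RWY} but does not supply a proof of its own. So there is no ``paper's proof'' to compare against here---the argument lives in \cite{RWY}, and your outline is indeed the standard route taken there: parametrize bigrassmannians by their unique essential rank condition, and match minimal non-elements of $[\mathrm{id},w]$ with the southeast corners of $D(w)$.

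That said, one step of your sketch is not right as written. In verifying $\Phi(a,b)\in E(w)$ you take $u<\Phi(a,b)$ and assert that ``on every other cell $\mathrm{rk}(u)\geq\mathrm{rk}(\Phi(a,b))=\mathrm{rk}(w)$ by choice of the threshold.'' The equality $\mathrm{rk}(\Phi(a,b))[i,j]=\mathrm{rk}(w)[i,j]$ away from $(a,b)$ is false in general: the bigrassmannian $\Phi(a,b)$ has its own rank table, which need not agree with $w$'s except at the single corner where you tuned the threshold. What you actually need is the poset-theoretic fact that a bigrassmannian, being join-irreducible, covers a \emph{unique} element $v'$; hence every $u<\Phi(a,b)$ lies below $v'$, and it suffices to check $v'\leq w$. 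That single comparison is governed by the rank at $(a,b)$, where $v'$ has rank exactly $\mathrm{rk}(w)[a,b]$, together with the explicit shape of a bigrassmannian's rank table. Reworking this step through the join-irreducible property (rather than a cell-by-cell rank comparison with $w$) will close the gap.

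Your treatment of the inverse direction---showing the corner $(a,b)$ attached to a minimal blocker must be an essential cell via the ``redundancy'' staircase inequalities---is the correct idea and matches the argument in \cite{RWY}; you are right that carrying it out carefully is where the real work is.
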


\begin{thm}
\emph{(}Reiner-Woo-Yong \cite{RWY}\emph{)}
  $I_{w}$ is generated by the set of all $[X_v]$ such that
  $v \not \leq w$, $v$ is Grassmannian and there exists some
  bigrassmannian $x \in E(w)$ such $x\leq v$ and
  $\mathrm{Des}(x)=\mathrm{Des}(v)$.
\end{thm}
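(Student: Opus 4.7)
The plan is to refine the Akyildiz-Lascoux-Pragacz generating set. Since every refined generator satisfies $v \not\leq w$, the inclusion of ideals $J \subseteq I_w$ (where $J$ denotes the ideal generated by the refined set) is immediate; the content is the reverse inclusion, and by Akyildiz-Lascoux-Pragacz it suffices to show that every Grassmannian $[X_v]$ with $v \not\leq w$ lies in $J$. Working in the Borel presentation $H^*(G/B) = \mathbb{Z}[x_1,\ldots,x_n]/(\Lambda_+)$, where Grassmannian Schubert polynomials are Schur polynomials $\mathfrak{S}_v = s_{\lambda_v}(x_1,\ldots,x_d)$ for $v$ with unique descent at $d$, the statement becomes a polynomial-ideal membership problem.

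Fix a Grassmannian $v$ with unique descent at $d$ and $v \not\leq w$, and pick a minimal element $x$ in $[\mathrm{id},v] \setminus [\mathrm{id},w]$. A short argument (any $y < x$ with $y \not\leq w$ would satisfy $y \leq v$, contradicting minimality) shows $x \in E(w)$, and by Geck-Kim the element $x$ is bigrassmannian with unique descent at some $d'$. If $d' = d$, then $v$ is itself a refined generator (with $x$ as the witness), so assume $d' \neq d$. The bigrassmannian $x$, viewed at its own descent $d'$, already lies in the refined set (take $x$ as its own witness). The task becomes to show that $\mathfrak{S}_v$ lies in the ideal generated by $\mathfrak{S}_x$ together with the other refined generators across all descent positions.

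The main obstacle is the explicit polynomial identity. In the simple case $w = 132$, $v = 231$, the refined generator is $\mathfrak{S}_{213} = x_1$, and one verifies the identity $\mathfrak{S}_{231} = x_1 x_2 = \mathfrak{S}_{213}(\mathfrak{S}_{132} - \mathfrak{S}_{213})$ directly. A slightly more delicate example is $w = 1324$, $v = 1423$, where $\mathfrak{S}_{1423} = x_1^2 + x_1 x_2 + x_2^2$ lies in the ideal generated by the refined generators $\mathfrak{S}_{2134} = x_1$ and $\mathfrak{S}_{1243} = x_1 + x_2 + x_3$ only after using the symmetric-function relation $e_2 = 0$ to rewrite $x_2^2$. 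For the general case, I would induct on the shape $\lambda_v$, using Pieri's rule or the Jacobi-Trudi expansion to peel off factors and leveraging the rectangular structure of the shape of $x$ together with the symmetric-function relations defining the Borel presentation. An alternative approach is to work parabolically through the projection $G/B \to G/P_{d'}$, where $[X_x]$ arises as a pullback from the Grassmannian and the relevant divisibility is more transparent, and then to transfer the identity back to $G/B$ via the projection formula. Tracking that every intermediate term stays inside $J$ throughout the rewriting is the technically delicate heart of the argument.
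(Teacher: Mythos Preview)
The paper is a survey and does not supply a proof of this theorem; it simply cites Reiner--Woo--Yong \cite{RWY} for the result. So there is no ``paper's own proof'' to compare against, and your proposal must be judged on its own terms.

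Your reduction is correct as far as it goes. The inclusion $J \subseteq I_w$ is immediate, and invoking Akyildiz--Lascoux--Pragacz to reduce the reverse inclusion to Grassmannian classes is the right move. Choosing $x$ minimal in $[\mathrm{id},v] \setminus [\mathrm{id},w]$, observing $x \in E(w)$, and applying Geck--Kim to conclude $x$ is bigrassmannian is all fine, and your handling of the case $\mathrm{Des}(x)=\mathrm{Des}(v)$ is correct.

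The genuine gap is the case $\mathrm{Des}(x)\neq\mathrm{Des}(v)$, and you acknowledge this yourself. Your example $w=1324$, $v=1423$ is well chosen precisely because it shows that the membership $\mathfrak{S}_v \in J$ is \emph{not} a purely multiplicative fact about Schubert polynomials: it genuinely requires the relations $e_i(x_1,\dots,x_n)=0$ of the Borel presentation. That already tells you that a naive induction on $\lambda_v$ via Pieri or Jacobi--Trudi, treating Schubert polynomials as honest polynomials, cannot succeed without systematically tracking contributions from the symmetric-function ideal. Your two suggested approaches (shape induction, or pulling back through $G/B \to G/P_{d'}$) are plausible heuristics, but neither is carried out, and the phrase ``technically delicate heart of the argument'' is where a proof should be, not a description of difficulty. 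As written, this is a proof outline that stops exactly at the nontrivial step; to complete it you would need to consult the original argument in \cite{RWY}, which proceeds via explicit straightening identities for Schur polynomials and a careful analysis tied to Fulton's essential set.
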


Reiner-Woo-Yong point out that this generating set for $I_w$ is still
not minimal in general.  This leads to some interesting open
questions.

\begin{ques}
Find a minimal set of generators for $I_{w}$ for all $w \in S_n$.  (See \cite{RWY}).
\end{ques}

\begin{ques}
What is the relationship between $E(w)$ and the defining equations for Schubert varieties in other types?
\end{ques}

\bigskip

The next property relates the Bott-Samelson resolution for a singular
Schubert variety and the Kazhdan-Lusztig basis elements to pattern
avoidance.  A resolution of a singular variety is called a
\textit{small resolution} if for every $r>0$, the space of points of
$X$ where the fiber over the point in the resolution has dimension $r$
is of codimension greater than $2r$.  In words, the singular points
where the resolution has to blow up the dimension a lot are rare in a
small resolution.  One reason that people care about small resolutions
is that   the intersection homology of a variety is just the homology
of a small resolution of the variety.  

\begin{PAP}
\textit{\!\!
\emph{(}Deodhar \cite{d}, Billey-Warrington \cite{BW-hexagon}\emph{)}
The following are equivalent.
\begin{enumerate}
\item $C'_{w}=C'_{s_{i_{1}}} C'_{s_{i_{2}}}\cdots C'_{s_{i_{p}}} $ for some (or any) reduced expression
$w=s_{i_{1}}s_{i_{2}}\cdots s_{i_{p}}$.
\item The Bott-Samelson resolution of $X_{w}$ is small.
\item $\displaystyle \sum_{v\leq w} t^{l(v)} P_{v,w}(t) = (1+t)^{l(w)}$.  
\item For each $v\leq w$, the Kazhdan-Lusztig polynomial can be written as
\[
\displaystyle P_{v,w}(t)=\sum_{\sigma \in E(v,w)} t^{\mathrm{defect}(\sigma )}.
\]
\item $w$ is 321-hexagon avoiding, that is, $w$ avoids
\[
321, 56781234, 56718234, 46781235, 46718235 .
\]
\end{enumerate}
}
\end{PAP}
The equivalence of the first four properties was given by Deodhar
\cite{d}.  Showing these properties have a pattern avoidance
characterization is due to Billey-Warrington \cite{BW-hexagon}.
Deodhar's theorem extends to all Weyl groups and in each case there is
again a pattern avoidance characterization due to Billey-Jones
\cite{Billey-Jones}.    

We should explain Deodhar's terminology $\mathrm{defect}(\sigma)$ and
$E(v,w)$ because we believe that they might have important implications for
answering Question~\ref{q:kl.poly}.  First, fix a reduced expression
for $w$.  This corresponds with a string diagram $S$ for $w$.  Think
of each crossing in the string diagram as optional.  Then $E(v,w)$ is
the set of all string diagrams $\sigma$ for $v$ obtained from $S$ by
choosing some subset of the crossings.  The \textit{defect} of
$\sigma$ is the number of times two strings come together that have
previously crossed an odd number of times in the string diagram, as one
progresses vertically.    Thus, 
$$
 P_{v,w}(t)=\sum_{\sigma \in E(v,w)} t^{\mathrm{defect}(\sigma )}
$$
is precisely the sort of combinatorial formula for the Kazhdan-Lusztig
polynomials we would like to have.  Deodhar has shown that for every
pair $v,w \in S_n$ there exists a set of string diagrams $E(v,w)$ for
which the same formula holds.  The only drawback is that in order to
find $E(v,w)$ one must basically compute $P_{v,w}$ using another
method first.

The next pattern property due to Tenner concerns a subset of the
321-hexagon avoiding permutations.

\begin{PAP} 
  \textit{\!\!  
\emph{(}Tenner \cite{t3}\emph{)}
The principal order ideal below $w$ in Bruhat order is
    isomorphic to a Boolean lattice if and only if $w$ is $321$ and $3412$
    avoiding. Equivalently, the Bott-Samelson resolution of $X_w$ is
    isomorphic to $X_w$.  }
\end{PAP}

Thus, a permutation is called \textit{Boolean} if it is $321$ and
$3412$ avoiding.  These permutations give rise to a
familiar enumerative sequence.  

\begin{thm}\emph{(}Fan \cite{fan1}, West \cite{west2}\emph{)}
The number of Boolean permutations in $S_{n}$ is
the Fibonacci number $F_{2n-1}$, e.g. $F_{1}=1,F_{3}=2,F_{5}=5$.
\end{thm}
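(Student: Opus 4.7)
The plan is to translate the Boolean avoidance condition into a reduced-word statement, count Boolean permutations by their support, and recognize the resulting recurrence as the one satisfied by $F_{2n-1}$.

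First I would use the equivalent characterization (underlying Property~7) that $w$ is Boolean if and only if some reduced expression $w = s_{i_1}s_{i_2}\cdots s_{i_p}$ has pairwise distinct letters. In type $A$ any such word is automatically reduced, and any subword (obtained by deleting some letters) is again a distinct-letter word, hence still reduced; moreover different subsets of positions give subwords with different sets of letters, hence products with different supports, hence different permutations. Thus $[id,w]$ is in bijection with subsets of the \emph{support} $\mathrm{supp}(w)\subseteq\{s_1,\dots,s_{n-1}\}$, which therefore is a well-defined invariant of a Boolean $w$ with $|\mathrm{supp}(w)|=\ell(w)$.

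Second I would count Boolean permutations by their support. Given $S\subseteq\{s_1,\dots,s_{n-1}\}$, write $S = S_1\sqcup\cdots\sqcup S_r$ as its decomposition into maximal consecutive intervals with $|S_j|=m_j$. Since simple reflections from different intervals commute, every Boolean $w$ with $\mathrm{supp}(w)=S$ factors uniquely as a commuting product of pieces with supports $S_j$, so the count of such $w$ is multiplicative over the components. For a single connected interval $\{s_a,\dots,s_{a+m-1}\}$, full commutativity of 321-avoiding elements identifies the reduced-word equivalence class with the $m-1$ binary choices of the relative order of each consecutive pair $(s_i,s_{i+1})$, giving exactly $2^{m-1}$ permutations. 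Hence the count of Boolean permutations with support $S$ is $\prod_j 2^{m_j-1} = 2^{|S|-r}$.

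Finally I would extract the recurrence. Letting $b_n$ denote the number of Boolean permutations in $S_n$ with $b_0=1$, and conditioning on whether $s_{n-1}\in\mathrm{supp}(w)$ and, if so, on the size $m$ of the connected component of $\mathrm{supp}(w)$ containing $s_{n-1}$, I obtain
\[
b_n \;=\; b_{n-1} \;+\; \sum_{m=1}^{n-1} 2^{m-1}\,b_{n-m-1}.
\]
Subtracting twice the analogous identity for $b_{n-1}$ telescopes the sum to $b_n = 3b_{n-1}-b_{n-2}$. Since $F_{2n-1}$ satisfies the same second-order recurrence (as $F_{2n-1}=F_{2n-2}+F_{2n-3}=2F_{2n-3}+F_{2n-4}=3F_{2n-3}-F_{2n-5}$) and the initial values match ($b_1=1=F_1$ and $b_2=2=F_3$), induction yields $b_n=F_{2n-1}$.

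The main obstacle is the count $2^{m-1}$ in step two: one needs both that 321-avoiding elements are fully commutative (so reduced words of such are equivalent under commutation moves only) and the more subtle fact that, inside a connected support, distinct commutation classes of words using each letter exactly once produce distinct permutations. Once this structural count is in place, the recurrence manipulation and identification with $F_{2n-1}$ is essentially bookkeeping.
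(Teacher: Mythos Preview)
The paper does not prove this theorem; it is a survey result attributed to Fan and West with no argument given. So there is nothing to compare against, and your proposal stands on its own.

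Your argument is correct. A few remarks on the points you flag as delicate:

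\begin{itemize}
\item The claim that a word in distinct simple reflections is automatically reduced follows directly from the Tits--Matsumoto word property: a non-reduced word can be transformed by braid moves into one with two equal adjacent letters, but both commutation moves and length-3 braid moves preserve the multiset of letters, so a distinct-letter word can never acquire a repeat. No appeal to full commutativity of 321-avoiding elements is needed here.
\item The count $2^{m-1}$ for a connected support of size $m$ is justified exactly as you say: commutation classes of distinct-letter words on a path of $m$ nodes are parametrized by orientations of the $m-1$ edges (all orientations of a tree are acyclic, hence linearly extendable), and Matsumoto again shows distinct commutation classes yield distinct group elements since no long braid move can apply.
\item You are invoking the equivalence ``$w$ is 321- and 3412-avoiding $\iff$ $w$ has a reduced word with pairwise distinct letters,'' which is Tenner's characterization underlying Property~7. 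One direction (distinct letters $\Rightarrow$ Boolean interval) is the easy subword argument you give; the converse is the content of \cite{t3}. Since the paper already cites this, you are on solid ground.
\end{itemize}

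The recurrence manipulation and the identification with $F_{2n-1}$ are routine and correct as written.
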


\bigskip

The next property relates Kazhdan-Lusztig polynomials to a filtration
on permutations.  It was conjectured by Billey-Braden \cite{BiBr} and
proved by Woo \cite{Woo-Billey-Weed}.

\begin{PAP} 
\textit{\!\!
\emph{(}Woo-Billey-Weed \cite{Woo-Billey-Weed}\emph{)} 
The Kazhdan-Lusztig polynomial $P_{id,w}(1)=2$ if and only if $w$ avoids 653421, 632541, 463152, 526413,
546213, and 465132 and the singular locus of $X_{w}$ has exactly 1 component.
}
\end{PAP}

To define a filtration on permutations in a similar way, let's make
the following definition.

\begin{defn} Let 
$\displaystyle KL_{m} = \{w \in S_{\infty } \ | \ P_{id,w}(1)\leq m \}$.
\end{defn}

For example, we know from Property 1 that $KL_{1}$ is the set of
permutations avoiding $3412$ and $4231$.  Similarly Billey-Weed used
Woo's theorem to show that $KL_{2}$ is characterized by the 66 permutation
patterns of length $\leq 8$ below.  This result is in an appendix to \cite{Woo-Billey-Weed}.  
\\
 
 (4 5 1 2 3) (3 4 5 1 2) (5 3 4 1 2) (5 2 3 4 1) (4 5 2 3 1) \vspace{0.05in}

 (3 5 1 6 2 4) (5 2 3 6 1 4) (5 2 6 3 1 4) (6 2 4 1 5 3) (5 2 4 6 1 3) \vspace{0.05in}

 (4 6 2 5 1 3) (5 2 6 4 1 3) (5 4 6 2 1 3) (3 6 1 4 5 2) (4 6 1 3 5 2) \vspace{0.05in}

 (3 6 4 1 5 2) (4 6 3 1 5 2) (5 3 6 1 4 2) (4 6 5 1 3 2) (4 2 6 3 5 1) \vspace{0.05in}

 (6 3 2 5 4 1) (6 3 5 2 4 1) (6 4 2 5 3 1) (6 5 3 4 2 1) \vspace{0.05in}

 (3 6 1 2 7 4 5) (6 2 3 1 7 4 5) (6 2 4 1 7 3 5) (3 4 1 6 7 2 5) \vspace{0.05in}

 (4 2 3 6 7 1 5) (4 2 6 3 7 1 5) (4 2 6 7 3 1 5) (3 7 1 2 5 6 4) \vspace{0.05in}

 (7 2 3 1 5 6 4) (3 7 1 5 2 6 4) (3 7 5 1 2 6 4) (7 5 2 3 1 6 4) \vspace{0.05in}

 (6 2 5 1 7 3 4) (7 2 6 1 4 5 3) (3 4 1 7 5 6 2) (3 5 1 7 4 6 2) \vspace{0.05in}

 (4 5 1 7 3 6 2) (4 2 3 7 5 6 1) (5 3 4 7 2 6 1) (4 2 7 5 6 3 1) \vspace{0.05in}

 (3 4 1 2 7 8 5 6) (4 2 3 1 7 8 5 6) (3 4 1 7 2 8 5 6) \vspace{0.05in}

 (4 2 3 7 1 8 5 6) (4 2 7 3 1 8 5 6) (3 5 1 2 7 8 4 6) \vspace{0.05in}

 (5 2 3 1 7 8 4 6) (5 2 4 1 7 8 3 6) (3 4 1 2 8 6 7 5) \vspace{0.05in}

 (4 2 3 1 8 6 7 5) (3 4 1 8 2 6 7 5) (4 2 3 8 1 6 7 5) \vspace{0.05in}

 (4 2 8 3 1 6 7 5) (3 4 1 8 6 2 7 5) (4 2 3 8 6 1 7 5) \vspace{0.05in}

 (4 2 8 6 3 1 7 5) (3 5 1 2 8 6 7 4) (5 2 3 1 8 6 7 4) \vspace{0.05in}

 (3 6 1 2 8 5 7 4) (6 2 3 1 8 5 7 4) (5 2 4 1 8 6 7 3) \vspace{0.05in}

 (6 2 5 1 8 4 7 3)
\\

\bigskip

A local ring $R$ is a \textit{local complete intersection (LCI)}
  if it is the quotient of some regular local ring by an ideal
  generated by a regular sequence. A variety is \textit{LCI} if every
  local ring is LCI.

\begin{PAP}
\textit{\!\!
\emph{(}\'Ulfarsson-Woo \cite{ulfarsson.woo.2013}\emph{)} 
A Schubert variety $X_w$ is LCI
if and only if w avoids 53241, 52341, 52431, 35142, 42513, and 426153.
}
\end{PAP}

Since regular local rings are LCI, smooth varieties are automatically
LCI. Furthermore, LCI varieties are Gorenstein and hence
Cohen-Macaulay. Thus, being LCI can be viewed as saying that the
singularities are in some sense mild. Compare the above criterion with
Property 1 (for smoothness) and Property 4 (for Gorenstein property).

\bigskip A permutation is \textit{vexillary} if it avoids $2143$,
introduced by Lascoux-Sch\"utzenberger in 1982
\cite{lascouxschutzenbergerschubert}.  The word vexillary is related
to flags, hence the choice.  We say $w$ is \textit{covexillary} if $w$ avoids $3412$.
There are so many interesting things to say related to vexillary and
covexillary permutations so the tenth property has 3 parts.

\begin{finalfirstPAP}
\textit{\!\!
\begin{enumerate}
\item \emph{(}Edelman-Greene \cite{EG}\emph{)}
  The number of reduced words for a vexillary permutation $v$ is
  equal to the number of standard tableaux of shape determined by
  sorting the lengths of the rows of the diagram of $v$.
\item \emph{(}Edelman-Greene \cite{EG}\emph{)}
  The Stanley symmetric function $F_{v}$ is a Schur function if
  and only if $v$ is vexillary.  Here 
\[
F_{v} = \sum_{\mathbf{a}=a_{1}a_{2}\dots a_{k} \in R(v)} \hspace{.2in}
\sum_{i_{1}\leq \dots \leq i_{k} \in C(\mathbf{a})} x_{i_{1}}
x_{i_{2}}\cdots x_{i_{k}}
\]
where $R(v)$ are the reduced words for $v$ and $C(\mathbf{a})$ are the
weakly increasing sequences of positive integers such that $i_{j}<
i_{j+1}$ if $a_{j}<a_{j+1}$.
\item \emph{(}Tenner \cite{t3}\emph{)}
The permutation $v$ is vexillary if and only if for every permutation
$w$ containing $v$, there exists a reduced decomposition $\mathbf{a}
\in R(w)$ containing a shift of some $\mathbf{b} \in R(v)$ as a factor.  
\end{enumerate}
}
\end{finalfirstPAP}

The next is a list of properties of vexillary permutations related to geometry of Schubert varieties.

\begin{finalsecondPAP}
\textit{\!\!
\begin{enumerate}
\item \emph{(}Fulton \cite{Fulton1}\emph{)} 
  Recall, Fulton's \textit{essential set} for $w$ is the collection of
  cells in the diagram of $w$ with no neighbor directly east or south.
  If $w$ is vexillary, these cells lie on an increasing piecewise
  linear curve.
\item \emph{(}Lascoux \cite{Lascoux95}\emph{)}
There exists a combinatorial approach to computing the Kazhdan-Lusztig
polynomials $P_{v,w}$ when $w$ is \textit{covexillary}.   
\item \emph{(}Li-Yong \cite{L-Y}\emph{)}
  There exists a combinatorial rule for computing multiplicities for
  $X_w$ when $w$ is covexillary.   
\end{enumerate}
}
\end{finalsecondPAP}

We say a permutation $w$ is \textit{$k$-vexillary} if its Stanley
symmetric function $F_{w}$ has at most $k$ terms of Schur functions in
its expansion.  For example, $F_{2143} = s_{(2)} + s_{(1,1)}$, so
$2143$ is 2-vexillary.   

\begin{finalthirdPAP}
\textit{\!\!
\emph{(}Billey-Pawlowski \cite{B-Paw}\emph{)} 
The $k$-vexillary permutations are characterized by a finite set of
patterns for all $k$.  
}
\end{finalthirdPAP}

For example, if $w$ is a permutation, then the following hold.  
\begin{enumerate}
\item $w$ is 2-vexillary if and only if $w$
avoids 35 patterns in $S_{5}, S_{6}, S_{7}, S_{8}$.   
\item $w$ is 3-vexillary if and only if $w$
avoids 91 patterns in $S_{5}, S_{6}, S_{7}, S_{8}$.
\end{enumerate}

\bigskip

The list of 2-vexillary patterns is given as follows: \\
\begin{small}
  \ \ \ (3 2 1 5 4) (2 1 5 4 3) 
  (2 1 4 3 6 5) (2 4 1 3 6 5) (3 1 4 2 6 5)  (3 1 2 6 4 5) 
  (2 1 4 6 3 5) (2 4 1 6 3 5) (2 3 1 5 6 4) (2 1 5 3 6 4)
  (3 1 5 2 6 4) (4 2 6 1 5 3) (5 2 7 1 4 3 6) (5 1 7 3 2 6 4)
  (4 2 6 5 1 7 3) (2 5 4 7 1 6 3) (5 4 7 2 1 6 3) (5 2 7 6 1 4 3)
  (6 1 8 3 2 5 4 7) (2 6 4 8 1 5 3 7) (6 4 8 2 1 5 3 7) (2 6 5 8 1 4 3
  7) (6 5 8 2 1 4 3 7) (5 1 7 3 6 2 8 4) (5 1 7 6 3 2 8 4) (6 1 8 3 7
  2 5 4) (6 1 8 7 3 2 5 4) (2 5 4 7 6 1 8 3) (5 4 7 2 6 1 8 3) (5 4 7
  6 2 1 8 3) (2 6 4 8 7 1 5 3) (6 4 8 7 2 1 5 3) (2 6 5 8 7 1 4 3) (6
  5 8 2 7 1 4 3) (6 5 8 7 2 1 4 3).   
\end{small}

We have given 10+ properties of Schubert varieties which are amenable
to pattern avoidance in their characterization.  This is just the
beginning of all the consequences for the Lakshmibai-Sandhya Theorem.
In the next section, we will discuss how pattern avoidance extends to
other Lie types and Coxeter groups.  

There are two further directions/consequences concerning special
families of varieties we should note.  First is the $GL_p\times
GL_q$-orbits in the flag variety for $GL_{p+q}$ with rationally smooth
closure.  These varieties are special cases of the symmetric varieties
studied by Springer.  McGovern has characterized which symmetric
varieties in this case are rationally smooth by using patterns
involving a multiset of numbers and $+$ and $-$ signs.  See
\cite{mcgovern.2009} for further details.  Similar results are given
in type $C$ by McGovern and Trapa \cite{mcgovern.trapa}.

Second, pattern avoidance also comes up in the study of Peterson
varieties.  The Peterson variety for $\mathbb{C}^n$ is the collection
of complete flags $F_\bullet$ such that $N\cdot F_i \subset F_{i+1}$
for all $1\leq i <n$ where $N$ is a fixed regular nilpotent matrix.
Up to isomorphism, the variety is independent of the choice of $N$.
Insko and Yong gave a combinatorial description of the singular locus
of the Peterson variety which involves the patterns 123 and 2143 among other
conditions \cite{Insko.Yong}.

\sectionspace
\section{Pattern avoidance for Coxeter groups}
In this section, we study pattern avoidance properties for Coxeter groups. First, we recall the definition of Coxeter groups and their basic properties. For details, see \cite{b-b,Hum}.

\subsection{A quick review on Coxeter groups} 
A \textit{Coxeter graph} is a simple graph with vertices $\{1,2,\dots , n\}$ and edges labeled by $\Z_{\geq 3}\cup \infty$. 
The Coxeter group associated to a Coxeter graph $G$ is the group generated by $S=\{s_{1}, s_{2},\dots, s_{n} \}$ with relations
\begin{enumerate}
\item $s_{i}^{2}=1.$
\item $s_{i}s_{j} = s_{j} s_{i}$ \hspace{.01in} if $i,j$ not adjacent in $G$. 
\item $\underbrace{s_{i}s_{j}s_{i} \cdots}_{m(i,j)\text{ generators}} =
\underbrace{s_{j} s_{i} s_{j} \cdots}_{m(i,j)\text{ generators}}$ if $i,j$ connected by edge labeled $m(i,j)<\infty$.
\end{enumerate}

\vspace{0.05in} 

Since a Coxeter group is completely determined by its Coxeter graph,
we simply need to draw the graph to refer to the associated Coxeter
group.  Conventionally, we drop the label 3 from any edge in pictures for
simplicity.
\begin{center}
\hspace{0in}
  \xymatrix @-1pc {
\gn_1 \ar@{-}[r]^{4} & \gn_2 \ar@{-}[r]^{3} & \gn_3 \ar@{-}[r]^{3} & \gn_4\\
} \hspace{.1in}  $\approx$ \hspace{.1in}
  \xymatrix @-1pc {
\gn_1 \ar@{-}[r]^{4} & \gn_2 \ar@{-}[r] & \gn_3 \ar@{-}[r] & \gn_4\\
} 
\end{center}

\bigskip

\begin{ex}
The following are examples of Coxeter groups.

\begin{itemize}
\item[(1)] Dihedral groups:  $\mathrm{Dih}_{10}$ is   \hspace{.3in}   \xymatrix @-1pc {
\gn_1 \ar@{-}[r]^{5} & \gn_2 }

\item[(2)] Symmetric groups:   $S_{5}$ is \hspace{.3in} \xymatrix @-1pc {
\gn_1 \ar@{-}[r] & \gn_2 \ar@{-}[r] & \gn_3 \ar@{-}[r] & \gn_4
} 

\item[(3)] Hyperoctahedral groups:  $B_{4}$ is \hspace{.3in}    \xymatrix @-1pc {
\gn_1 \ar@{-}[r]^{4} & \gn_2 \ar@{-}[r] & \gn_3 \ar@{-}[r] & \gn_4\\
}

\item[(4)] 
The exceptional Weyl groups: $E_{8}$ is
\[
\unitlength 0.1in
\begin{picture}(  8.4000,  3.2000)( 12.0000,-15.5000)
\put(12.0000,-14.0000){\makebox(0,0)[lb]{\xymatrix @-1pc { \gn_1 \ar@{-}[r] & \gn_2 \ar@{-}[r] & \gn_3 \ar@{-}[r] & \gn_4 \ar@{-}[r] & \gn_5 \ar@{-}[r] & \gn_6 \ar@{-}[r] &  \gn_7 }}}%
\put(20.1000,-16.9000){\makebox(0,0)[lb]{$\gn_{8}$}}%
%
\special{pn 8}%
\special{pa 2040 1550}%
\special{pa 2040 1390}%
\special{fp}%
\end{picture}%
\]
\end{itemize}
\vspace{0.5cm}
Curiously, the exceptional Weyl group $E_{8}$ appears in string theory
and in chemistry related to the symmetry group of the $C_{60}$
molecule and buckyballs \cite{CKS}.
\end{ex}

Fix a Coxeter group $W$ with Coxeter graph $G$.  
The set of \textit{reflections} $R \subset W$ is the set of  all
conjugates of the generators, 
\[
R=\bigcup_{w\in W} w S w^{-1}.
\]
A \textit{reduced expression} of an element $w\in W$ is an expression $w=s_{i_1}\cdots s_{i_k}$ as a product of generators in which $k$ is the minimum among such expressions.
The \textit{length} of $w\in W$ is the length of a reduced expression
for $w$, denoted $\ell(w)$ again.
Bruhat order on the Coxeter group $W$ is the transitive closure of the following relation
\[
x \leq y \quad \text{ if } \quad \ell(x) < \ell(y) \text{ and } xy^{-1} \in R.
\]
It was observed by Chevalley that $x \leq y$ if and only if for any
reduced expression $y= s_{i_{1}} s_{i_{2}} \dots s_{i_{p}}$ there
exists a subexpression which is a reduced expression for $x$, in symbols
$x =s_{i_{1}}^{\sigma_{1}}s_{i_{2}}^{\sigma_{2}} \dots
s_{i_{p}}^{\sigma_{p}} $ for some \textit{mask} $\sigma_{1}\dots
\sigma_{p}\in \{0,1\}^{p}$ \cite{CHEV}.

\bigskip

There are many expressions for any $w \in W$ as a product of
generators, but it is a well known hard problem to tell when two
expressions are equal in a group using only generators and relations.
Luckily, there is an algorithm of finding a canonical representative
for each element of $W$, called the \textit{Mozes numbers game}. See Mozes
1990, Eriksson-Eriksson 1998, Bj\"orner-Brenti \cite{b-b,EE-98,Mozes}. Let us
briefly explain this game/algorithm here.

Replace each edge $(i,j)$ of $G$ by two opposing directed edges labeled $f_{ij}>0$ (for the edge $i\rightarrow j$) and $f_{ji}>0$ (for the edge $j\rightarrow i$) so that $f_{ij}f_{ji} = 4 \mathrm{cos}^{2} \left(\frac{\pi}{ m(i,j)} \right)$ or $f_{ij}f_{ji} =4$ if $m(i,j)=\infty$. 
These labels are fixed in the game once chosen. The following is a useful choice since the labels are all integers. 
\[
\begin{array}{c|c|c}
m(i,j) &   f_{ij} & f_{ji}\\
\hline
3 &	1 &	1\\
4 &	 2 &	1 \\
6 &	3 &	1 \\
\infty &	4 &	1
\end{array}
 \]

 Assume that we are given an element $w=s_{i_{1}}s_{i_{2}}\dots
 s_{i_{p}}\in W$. The canonical presentation of $w$ is obtained as
 follows.  We first assign value $1$ to each vertex $G$.  Next,
 \textit{fire the vertex} $s_{i_1}$.  Here, firing the vertex $s_{i}$ is
 an operation done by adding to the value of each neighbor vertex $j$,
 the current value at the vertex $i$ multiplied by $f_{ij}$, and then
 negating the sign of the value of the vertex $i$.  We continue to fire
 the vertices $s_{i_2}, s_{i_3},\ldots, s_{i_p}$ consecutively.  The
 resulting assignment of values for vertices of $G$, denoted by $G(w)$,
 provides a canonical presentation of the given $w$.  In fact, this
 algorithm satisfies the following properties:

\begin{enumerate}
\item $G(w)$ only depends on the product $s_{i_{1}}s_{i_{2}}\dots
s_{i_{p}}$ and not on the particular choice of expression.

\item The vertex $i$ is negative in $G(w)$ if and only f $ws_{i} <w$.

\item The vertex $i$ never has value 0.
\end{enumerate}

Note, the map $G$ is injective but not surjective on the set of all
integer assignments to the nodes of the Coxeter graph.

\begin{rem}
For $I \subset S$, it is possible to modify the game to get
representatives for $W/W_{I}$ by starting with initial value 0 on
vertices in $I$ and 1's elsewhere.  
Then $ws_{i}=w$ if and only if the vertex $i$ has value 0 in $W/W_{I}$.  
This is useful for Schubert geometry of Grassmannians and affine Grassmannians.
\end{rem}


For a Coxeter group $W$, we can associate to it its \textit{root
  system} $\Phi\subset V= \mathbb{R}^{|S|}$ where $\{\alpha_{s} \colon s
\in S \}$ forms a basis of $V$ \cite[Section 5.4]{Hum}. $W$ acts
linearly on $V$ and $\Phi$ is $W$-invariant.  We denote by $\Phi_{+}$
and $\Phi_{-}$ the set of \textit{positive roots} and the set of
\textit{negative roots}, respectively:
\begin{align*}
&\Phi_{+} = \{\alpha \in \Phi \colon\alpha = \sum c_{s} \alpha_{s} , \
c_{s}\geq 0, \forall s \in S\}, \\
&\Phi_{-} = \{\alpha \in \Phi \colon \alpha = \sum c_{s} \alpha_{s} , \
c_{s} \leq 0\, \forall s \in S\}.
\end{align*}
It follows that $\Phi = \Phi_{+} \cup \Phi_{-}$ (disjoint
union). There is a natural bijection between $R$ and $\Phi_{+}$ which
we will denote by $r \rightarrow \alpha_r$.  Then,
for $r \in R, w \in W$, we have
\[
wr > w  \quad \text{ if and only if } \quad w\alpha_{r} \in \Phi_{+}.
\] 


\begin{ex}
Let $e_{1},\dots , e_{n}$ be the standard orthonormal basis of $\mathbb{R}^{n}$. Then the root system of the Weyl groups of classical types are determined by the following description of $\Phi_{+}$.

\begin{itemize}
\item[]  $A_{n-1}$ : \hspace{.2in} $\Phi_{+} = \{e_{i}-e_{j}\colon 1\leq
  i<j \leq n\}$  
\\
\item[] $B_{n}$  : \hspace{.3in} $\Phi_{+} = \{e_{i}-e_{j}\colon 1\leq i<j
  \leq n \} \cup \{e_{i}+e_{j}\colon 1 \leq i<j \leq n\} \cup \{e_{i}\colon
  1\leq i \leq n \}$  
\\
\item[] $C_{n}$ : \hspace{.3in} $\Phi_{+} = \{e_{i}-e_{j}\colon 1\leq i<j
  \leq n \} \cup \{e_{i}+e_{j}\colon 1 \leq i<j \leq n \} \cup \{2e_{i}\colon 1
  \leq i \leq n \}$  
\\
\item[] $D_{n}$ : \hspace{.3in} $\Phi_{+} = \{e_{i}-e_{j}\colon  1 \leq i<j
  \leq n\} \cup \{e_{i}+e_{j}\colon 1 \leq i<j  \leq n\}$  
\end{itemize}
\end{ex}

The \textit{inversion set} of $w\in W$ is defined to be $w\Phi_{+}
\cap \Phi_{-} $.  In type $A_{n-1}$, these roots are in bijection with
the inversion set of $w$ defined originally.    For a linear function
$H\colon V \longrightarrow \mathbb{R}$, we let
\[
\Pi_{H} = \{\alpha \in  \Phi \colon  H(\alpha ) >0\}.
\]
This is an intersection of the set of roots with a half space.
We say $H$ is \textit{generic} if $H(\alpha ) \neq 0$ for all $\alpha \in \Phi$.

\begin{ex}
If $H_{1}\colon V \longrightarrow \mathbb{R}$ is defined by
$H_{1}(\alpha_{s})=1$ for all $s\in S$, then  $\Pi_{H_{1}}=\Phi_{+}$.  
\end{ex}

\begin{defn}
For each $w\in W$, set $H_{w} = H_{1}\circ w^{-1}$.
Then, we have $\Pi_{H_{w}} = w \Phi_{+}$.
\end{defn}

A key fact is that, if $H$ is generic, then $\Pi_{H} = w\Phi_{+}$ for
some unique $w \in W$. That is, every generic half space determines a
unique $w\in W$ whose inversion set is exactly the negative roots in the given half space. 
Below are the positive roots for two types of Coxeter groups  drawn projectively in 2 dimensions. 
We denote by $\beta_{ij}=e_i-e_j$ for $A_3$.
\vspace{0.05in}
\[
\unitlength 0.1in
\begin{picture}( 43.5000, 18.3000)( 13.5000,-21.9000)
%
\special{pn 8}%
\special{pa 1600 1000}%
\special{pa 3100 1000}%
\special{fp}%
%
\special{pn 8}%
\special{sh 1.000}%
\special{ar 2100 1000 26 26  0.0000000 6.2831853}%
%
\special{pn 8}%
\special{sh 1.000}%
\special{ar 1590 1000 26 26  0.0000000 6.2831853}%
%
\special{pn 8}%
\special{sh 1.000}%
\special{ar 3090 1000 26 26  0.0000000 6.2831853}%
%
\special{pn 8}%
\special{sh 1.000}%
\special{ar 2590 1000 26 26  0.0000000 6.2831853}%
\put(13.5000,-10.7000){\makebox(0,0)[lb]{$\beta_1$}}%
\put(31.5000,-10.7000){\makebox(0,0)[lb]{$\beta_2$}}%
\put(18.5000,-9.2000){\makebox(0,0)[lb]{$\beta_1+\beta_2$}}%
\put(24.1000,-9.2000){\makebox(0,0)[lb]{$\beta_1+2\beta_2$}}%
\put(15.2000,-15.3000){\makebox(0,0)[lb]{$\beta_1$ and $\beta_1+2\beta_2$ are long roots.}}%
\put(15.2000,-17.3000){\makebox(0,0)[lb]{$\beta_1+\beta_2$ and $\beta_2$ are short roots.}}%
\put(15.2000,-5.3000){\makebox(0,0)[lb]{$B_2:$}}%
\put(39.2000,-5.3000){\makebox(0,0)[lb]{$A_3=S_4:$}}%
\put(43.2000,-19.6000){\makebox(0,0)[lb]{$\beta_{12}=\beta_1, \ \beta_{23}=\beta_2, \ \beta_{34}=\beta_3, $}}%
\put(43.2000,-21.6000){\makebox(0,0)[lb]{$\beta_{13}=\beta_1+\beta_2, \ \beta_{24}=\beta_2+\beta_3, $}}%
\put(46.4000,-23.6000){\makebox(0,0)[lb]{$\beta_{14}=\beta_1+\beta_2+\beta_3.$}}%
%
\special{pn 8}%
\special{pa 5200 618}%
\special{pa 4750 1514}%
\special{fp}%
%
\special{pn 8}%
\special{pa 5200 618}%
\special{pa 5646 1514}%
\special{fp}%
%
\special{pn 8}%
\special{pa 5646 1514}%
\special{pa 4974 1052}%
\special{fp}%
%
\special{pn 8}%
\special{pa 4750 1514}%
\special{pa 5416 1052}%
\special{fp}%
\put(45.1000,-16.3200){\makebox(0,0)[lb]{$\beta_{12}$}}%
\put(57.0000,-16.3200){\makebox(0,0)[lb]{$\beta_{34}$}}%
\put(47.2000,-10.8200){\makebox(0,0)[lb]{$\beta_{13}$}}%
\put(51.3000,-14.2000){\makebox(0,0)[lb]{$\beta_{14}$}}%
\put(55.1000,-10.8200){\makebox(0,0)[lb]{$\beta_{24}$}}%
\put(51.3500,-5.4000){\makebox(0,0)[lb]{$\beta_{23}$}}%
%
\special{pn 8}%
\special{sh 1.000}%
\special{ar 5200 610 26 26  0.0000000 6.2831853}%
%
\special{pn 8}%
\special{sh 1.000}%
\special{ar 5200 1210 26 26  0.0000000 6.2831853}%
%
\special{pn 8}%
\special{sh 1.000}%
\special{ar 5420 1050 26 26  0.0000000 6.2831853}%
%
\special{pn 8}%
\special{sh 1.000}%
\special{ar 4980 1060 26 26  0.0000000 6.2831853}%
%
\special{pn 8}%
\special{sh 1.000}%
\special{ar 4760 1500 26 26  0.0000000 6.2831853}%
%
\special{pn 8}%
\special{sh 1.000}%
\special{ar 5630 1500 26 26  0.0000000 6.2831853}%
\end{picture}%
\]
\vspace{0.05in}
%

For example, if a given half space $\Pi_H$ contains all the positive
roots $\beta_{ij}$'s except for $\beta_{23}$ and $\beta_{24}$, then
$\Pi_H = \Pi_{H_w}$ for  $w=2431$.

\bigskip

\subsection{Coxeter patterns}

Each subset $I \subset S$ generates a subgroup $W_I$.   A subgroup 
$W'\subset W$ which is conjugate to $W_I$ for some $I$ is called
a {\em parabolic} subgroup.  The $W_I$'s themselves are known as
{\em standard} parabolic subgroups.

A parabolic subgroup $W' = xW_Ix^{-1}$ of $W$ is again a Coxeter
group, with simple reflections $S' = xIx^{-1}$ and reflections $R' = R
\cap W'$.  Note that $S' \not\subset S$ unless $W'$ is standard.  

We denote the length function and the Bruhat-Chevalley order for
$(W',S')$ by $l'$ and $\le'$, respectively.  If $W' = W_I$ then
\[l' = l|_{W'}\;\text{and}\; \mathord{\le'} = \mathord{\le}|_{W'\times W'},\]
but in general we only have $l'(w) \le l(w)$ and $x \le' y \implies x \le y$.
For instance, if $W' \subset \mfS_4$ is generated by the reflections
$r_{23} = 1324$ and $r_{14} = 4231$, then $r_{23} \le r_{14}$ although they are
not comparable for $\le'$.

The following theorem/definition generalizes the flattening function
for permutations to all Coxeter groups.  The following theorem is
closely related to a theorem due to Dyer on reflection subgroups
\cite[Thm. 1.4]{dyer.1991}.

\begin{thm} \label{coset theorem}\cite{BiBr} Let $W' \subset W$ be a parabolic subgroup.
There is a unique function $\fl\colon W\to W'$, the 
{\em pattern map} for $W'$, satisfying the following two properties.
\begin{enumerate} \item[(a)] 
The map $\fl$ is $W'$-equivariant: $\fl(wx) = w\fl(x)$ for all $w\in W'$, $x\in W$.
\item[(b)] If $\fl(x) \le' \fl(wx)$ for some $w\in W'$, 
then $x \le wx$.
\end{enumerate}
In particular, $\fl$ restricts to the identity map on $W'$.
\end{thm}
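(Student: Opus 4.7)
The plan is to build $\fl$ explicitly using the half-space picture of Section 4.1, then check the two axioms and finally uniqueness. For the root subsystem $\Phi' \subset \Phi$ attached to $W'$, I would work with the positive system $\Phi'_+ := \Phi_+ \cap \Phi'$, which is the unique choice making $\fl|_{W'} = \mathrm{id}$ come out automatically. For each $x \in W$, the inversion set $x\Phi_+$ is cut out of $\Phi$ by the generic linear functional $H_x = H_1 \circ x^{-1}$, whose restriction to $\Phi'$ remains generic. Applying to $(W', \Phi'_+)$ the fact recalled earlier that every generic half-space corresponds to a unique Coxeter element, the intersection $x\Phi_+ \cap \Phi'$ must equal $y\Phi'_+$ for a unique $y \in W'$, and I set $\fl(x) := y$. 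Property (a) is then immediate: for $w \in W'$, using $w\Phi' = \Phi'$, one has $(wx)\Phi_+ \cap \Phi' = w(x\Phi_+) \cap w\Phi' = w(x\Phi_+ \cap \Phi') = w\fl(x)\Phi'_+$, giving $\fl(wx) = w\fl(x)$.

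For property (b), I would first handle the case $w = r \in R' := R \cap W'$ via the inversion criteria $y < ry \Leftrightarrow \alpha_r \in y\Phi_+$ in $W$ and $y <' ry \Leftrightarrow \alpha_r \in y\Phi'_+$ in $W'$. With $y = \fl(x)$, the hypothesis $\fl(x) <' r\fl(x)$ reads $\alpha_r \in \fl(x)\Phi'_+ = x\Phi_+ \cap \Phi'$; since $\alpha_r \in \Phi'$ is automatic, this reduces to $\alpha_r \in x\Phi_+$, i.e.\ $x < rx$. For general $w \in W'$ with $\fl(x) \le' w\fl(x)$, I would pick a saturated chain $\fl(x) = u_0 <' u_1 <' \cdots <' u_k = w\fl(x)$ in $(W', \le')$ with cover reflections $u_{i+1} = r_i u_i$, $r_i \in R'$, and define $x_i := u_i \fl(x)^{-1} x$. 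Then $x_0 = x$, $x_k = wx$, $x_{i+1} = r_i x_i$, and (a) gives $\fl(x_i) = u_i$, so the reflection case applied at each step yields $x_i < x_{i+1}$ and hence $x \le wx$.

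For uniqueness, let $\fl'$ be any other map satisfying (a) and (b). By (a), $\fl'$ is determined by $R(\fl') := (\fl')^{-1}(e)$, a system of representatives for the $W'$-cosets in $W$. If $x \in R(\fl')$, then $\fl'(x) = e \le' w = \fl'(wx)$ holds trivially for every $w \in W'$, so (b) forces $x \le wx$ for all $w \in W'$, i.e.\ $x$ is Bruhat-minimal in the coset $W'x$. Such a minimum is unique: if $x$ and $wx$ were both minima of $W'x$, then $x \le wx$ by minimality of $x$ and $wx \le w^{-1}(wx) = x$ by minimality of $wx$, forcing $w = e$. Hence $R(\fl')$ is exactly the set of Bruhat-minima of the $W'$-cosets, which coincides with $R(\fl)$ from the construction, and so $\fl' = \fl$.

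The main obstacle I expect is the chain argument in (b): the identity $\fl(x_i) = u_i$ must be propagated along the entire chain using the $W'$-equivariance at each step, and this is what makes a $\le'$-chain in $W'$ lift to an ordinary Bruhat chain in $W$. A secondary bookkeeping point is confirming that the positive system $\Phi'_+ = \Phi_+ \cap \Phi'$ really does induce the Coxeter structure $(W', S')$ used in the statement, so that $\ell'$ and $\le'$ agree in both points of view; once that matching is made, the argument proceeds as above.
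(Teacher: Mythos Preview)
Your approach is essentially the same as the paper's: both use the half-space/root-subsystem construction to define $\fl$ and the coset-minimum argument for uniqueness. Since this is a survey, the paper only sketches the argument—it states that (a) determines $\fl^{-1}(1)$, that (b) forces $\fl^{-1}(1)$ to consist of the unique Bruhat minima in each $W'$-coset, and then records the half-space realization without explicitly verifying (a) and (b), deferring the details to \cite{BiBr}. Your write-up is more complete in that you actually supply the verification of (b) via the saturated-chain lift, which is exactly the kind of argument used in the original source; the bookkeeping caveat you flag about matching $\Phi'_+ = \Phi_+ \cap \Phi'$ with the Coxeter structure $(W',S')$ is real but standard, and the paper similarly glosses over it.
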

If $W' = W_I$ is a standard parabolic, then (b) can be strengthened to
``if and only if''.  In this case the result is well-known.

To show uniqueness, note that (a) implies that $\fl$ is determined by
the set $\fl^{-1}(1)$, and (b) implies that $\fl^{-1}(1) \cap W'x$ is
the unique minimal element in $W'x$.  Existence is more subtle; it is
not immediately obvious that the function so defined satisfies (b).
We give a construction of a function $\fl$ that satisfies (a) and
(b).

Recall $V$ is the real vector space spanned by the roots in the root
system $\Phi$ associated to the Coxeter group $W$.   If $U \subset V$
is a linear subspace, then we use the following notations:
\begin{itemize}
\item[] $\Phi^{U}:=\Phi \cap U $, a \textit{root subsystem} of $\Phi$,
\item[] $W^{U}$ is the group generated by reflections $r_{\alpha}$ for $\alpha \in \Phi^{U}$,
\item[] $R^{U} := R \cap W^{U}$.
\end{itemize}

One can show that $W^{U}$ is a parabolic subgroup of $W$ assuming $W$ is finite, see
\cite[\S1.12]{Hum}.  Note that not all subgroups of $W$ generated by
reflections are parabolic subgroups.  For example, for $B_{3}$, the
group generated by reflections over the $e_{i}$'s is not parabolic.

By the uniqueness statement in Theorem~\ref{coset theorem}, we can use
the sets $\Pi_H$ defined earlier to realize $\fl\colon W\longrightarrow
W^U$.  In fact, $\fl(w)$ is the unique element $x \in W^{U}$ such that
\begin{align*}
w\Phi_{+} \cap U &= \{\alpha \in U \cap  \Phi \colon H_{w} (\alpha )>0 \}\\
	&= \{\alpha \in   \Phi^{U} \colon H' (\alpha )>0 \} \text{ where } H' = H_{w}|_{U}\\
        &= x \Phi_{+}^{U}.
\end{align*}
This realization of the flattening map for Weyl groups was first given
by Billey-Postnikov \cite{BP-smooth} even though it was published
later than \cite{BiBr}.   The delay is explained below.

\begin{ex}
Let $U=\text{span}\langle \beta_{23}, \beta_{34} \rangle$. Then $\fl_U(2431)=243$. See the picture in the previous example.
\end{ex}

\subsection{Applications of Coxeter Patterns}
Let us denote 
\begin{itemize}
\item[] $G$ : a semisimple simply-connected complex Lie group,
\item[] $B\subset G$ : a Borel subgroup, 
\item[] $T \subset B$ : a maximal torus, 
\item[] $W = N(T)/T$ : the Weyl group (a finite Coxeter group),
\item[] $\Phi \subset V$ : the  associated root system
\end{itemize}
where $N(T)$ is the normalizer of $T$ in $G$.  The finite Weyl groups
(or root systems) that arise this way have been completely classified
into types $A_{n}, B_{n}, C_{n}, D_{n}, E_{6},E_{7}, E_{8}, F_{4},
G_{2}$.  The \textit{Bruhat decomposition} enables us to partition $G$
using the Borel subgroup and the Weyl group:
\[
\displaystyle G = \bigcup_{w \in W} BwB.
\]
The quotient $G/B$ is called the \textit{(generalized) flag manifold}, and Schubert cells and Schubert varieties of $G/B$ are 
\[
C_{w} = B\cdot w, \quad X_{w} = \overline{B\cdot w} 
\]
for each $w\in W$, respectively.  

The next theorem characterizes all smooth Schubert varieties for any
semisimple simply-connected complex Lie group $G$.  To state the
theorem, we need a few more definitions.

\begin{defn}
  A Coxeter group $W$ is \textit{stellar} if its Coxeter graph has one
  central vertex and all other vertices are only adjacent to it.
\end{defn}

The stellar Coxeter groups corresponding to the Weyl groups of types
$A, B, C, D, E, F$ and $G$ (except for $A_2$) are drawn below where a
double edge and a triple edges mean that the label of the
corresponding edge is 4 and 6, respectively.  Note, that the Weyl
groups of types $B_n$ and $C_n$ are isomorphic, but the pattern map
works slightly differently on each so we list their Dynkin diagram
instead of their Coxeter graph.

\[
\unitlength 0.1in
\begin{picture}( 43.3700, 11.3000)( 12.0000,-18.9000)
\put(12.0000,-10.6000){\makebox(0,0)[lb]{$B_2=$}}%
%
\special{pn 8}%
\special{pa 2800 998}%
\special{pa 3800 998}%
\special{fp}%
%
\special{pn 8}%
\special{pa 1600 1570}%
\special{pa 2100 1570}%
\special{fp}%
%
\special{pn 8}%
\special{sh 1.000}%
\special{ar 2110 1600 38 38  0.0000000 6.2831853}%
%
\special{pn 8}%
\special{pa 1600 1630}%
\special{pa 2100 1630}%
\special{fp}%
%
\special{pn 8}%
\special{pa 1600 1600}%
\special{pa 2100 1600}%
\special{fp}%
%
\special{pn 8}%
\special{sh 1.000}%
\special{ar 1600 1600 38 38  0.0000000 6.2831853}%
%
\special{pn 8}%
\special{pa 4500 998}%
\special{pa 5000 998}%
\special{fp}%
%
\special{pn 8}%
\special{sh 1.000}%
\special{ar 2110 1000 38 38  0.0000000 6.2831853}%
%
\special{pn 8}%
\special{sh 1.000}%
\special{ar 2800 998 38 38  0.0000000 6.2831853}%
%
\special{pn 8}%
\special{sh 1.000}%
\special{ar 1600 1000 38 38  0.0000000 6.2831853}%
%
\special{pn 8}%
\special{sh 1.000}%
\special{ar 3800 998 38 38  0.0000000 6.2831853}%
%
\special{pn 8}%
\special{sh 1.000}%
\special{ar 3300 998 38 38  0.0000000 6.2831853}%
%
\special{pn 8}%
\special{sh 1.000}%
\special{ar 4500 998 38 38  0.0000000 6.2831853}%
%
\special{pn 8}%
\special{sh 1.000}%
\special{ar 5000 998 38 38  0.0000000 6.2831853}%
%
\special{pn 8}%
\special{pa 5400 798}%
\special{pa 5000 998}%
\special{fp}%
%
\special{pn 8}%
\special{sh 1.000}%
\special{ar 5400 798 38 38  0.0000000 6.2831853}%
%
\special{pn 8}%
\special{sh 1.000}%
\special{ar 2800 1598 38 38  0.0000000 6.2831853}%
%
\special{pn 8}%
\special{sh 1.000}%
\special{ar 3800 1598 38 38  0.0000000 6.2831853}%
%
\special{pn 8}%
\special{sh 1.000}%
\special{ar 3300 1598 38 38  0.0000000 6.2831853}%
%
\special{pn 8}%
\special{sh 1.000}%
\special{ar 4500 1598 38 38  0.0000000 6.2831853}%
%
\special{pn 8}%
\special{sh 1.000}%
\special{ar 5500 1598 38 38  0.0000000 6.2831853}%
%
\special{pn 8}%
\special{sh 1.000}%
\special{ar 5000 1598 38 38  0.0000000 6.2831853}%
%
\special{pn 8}%
\special{pa 1600 980}%
\special{pa 2100 980}%
\special{fp}%
%
\special{pn 8}%
\special{pa 1600 1020}%
\special{pa 2100 1020}%
\special{fp}%
%
\special{pn 8}%
\special{pa 4500 1598}%
\special{pa 5000 1598}%
\special{fp}%
%
\special{pn 8}%
\special{pa 5000 1578}%
\special{pa 5500 1578}%
\special{fp}%
%
\special{pn 8}%
\special{pa 5000 1618}%
\special{pa 5500 1618}%
\special{fp}%
%
\special{pn 8}%
\special{pa 2800 1598}%
\special{pa 3300 1598}%
\special{fp}%
%
\special{pn 8}%
\special{pa 3300 1578}%
\special{pa 3800 1578}%
\special{fp}%
%
\special{pn 8}%
\special{pa 3300 1618}%
\special{pa 3800 1618}%
\special{fp}%
%
\special{pn 8}%
\special{sh 1.000}%
\special{ar 5400 1198 38 38  0.0000000 6.2831853}%
%
\special{pn 8}%
\special{pa 5400 1198}%
\special{pa 5000 998}%
\special{fp}%
\put(24.0000,-10.5700){\makebox(0,0)[lb]{$A_3=$}}%
\put(12.0000,-16.6000){\makebox(0,0)[lb]{$G_2=$}}%
\put(24.0000,-16.6000){\makebox(0,0)[lb]{$B_3=$}}%
\put(41.0000,-16.6000){\makebox(0,0)[lb]{$C_3=$}}%
\put(41.0000,-10.6000){\makebox(0,0)[lb]{$D_4=$}}%
%
\special{pn 8}%
\special{pa 5030 1600}%
\special{pa 5110 1540}%
\special{fp}%
%
\special{pn 8}%
\special{pa 1630 1600}%
\special{pa 1710 1540}%
\special{fp}%
%
\special{pn 8}%
\special{pa 1630 1600}%
\special{pa 1710 1660}%
\special{fp}%
%
\special{pn 8}%
\special{pa 5030 1600}%
\special{pa 5110 1660}%
\special{fp}%
%
\special{pn 8}%
\special{pa 2090 1000}%
\special{pa 2010 940}%
\special{fp}%
%
\special{pn 8}%
\special{pa 2090 1000}%
\special{pa 2010 1060}%
\special{fp}%
%
\special{pn 8}%
\special{pa 3790 1600}%
\special{pa 3710 1540}%
\special{fp}%
%
\special{pn 8}%
\special{pa 3790 1600}%
\special{pa 3710 1660}%
\special{fp}%
\put(21.4000,-20.6000){\makebox(0,0)[lb]{Dynkin diagrams of stellar root systems}}%
\end{picture}%
\]
\vspace{0.1in}

%
%
%
%
%
%
%
%

\begin{thm}\label{BP sm for Schubert}
  \emph{(}Billey-Postnikov \cite{BP-smooth}\emph{)} A Schubert variety $X_{w}$ is smooth if and only if
  for every stellar parabolic subgroup $W^{U}$, the Schubert variety
  $X_v$ for $v=\fl_U(w)$ is smooth in $G^U/B^U$.
\end{thm}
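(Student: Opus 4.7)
The plan is to reduce the theorem to the tangent-space criterion for smoothness, and then exploit the compatibility of the pattern map $\fl_U$ with Bruhat order as encoded in Theorem 4.1. By the Coxeter-group analogue of Theorem 3.2, $X_w$ is smooth at a $T$-fixed point $v \leq w$ precisely when $\dim T_v(X_w) = \#\{r \in R : v r \leq w\}$ equals $\ell(w)$, and by the $B$-orbit argument from Section 3 it suffices to check this at the identity after translating, on both sides of the claimed equivalence.

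For the forward direction, assume $X_w$ is smooth, fix a stellar parabolic $W^U$, and set $v = \fl_U(w)$. The goal is to show
\[
\#\{r \in R^U : r \leq' v\} = \ell'(v).
\]
The half-space realization of the pattern map gives $v\,\Phi_+^U = w\Phi_+ \cap U$, which identifies the inversions of $v$ inside $\Phi^U$ with the inversions of $w$ lying in $\Phi^U$, so $\ell'(v) = \#(w\Phi_+ \cap U \cap \Phi_-)$. Property (b) of Theorem 4.1, applied to reflections $r \in R^U$, then implies that every such $r$ satisfying $r \leq' v$ also satisfies $r \leq w$. Combining these two facts with the smoothness equality for $w$ yields the smoothness equality for $v$, and the $B$-orbit argument inside $G^U/B^U$ extends this from the identity to every fixed point.

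For the backward direction --- the substantive half --- I would argue by contrapositive: suppose $X_w$ is singular, and produce a stellar parabolic $W^U$ so that $X_{\fl_U(w)}$ is singular in $G^U/B^U$. After translating one may assume the singularity occurs at the identity, so $\#\{r \in R : r \leq w\} > \ell(w)$. The strategy is to select a subspace $U$ of the root space so that $\Phi^U$ is a stellar root subsystem, the excess of reflections over length is witnessed by reflections in $R^U$, and the half-space identification $\fl_U(w)\,\Phi_+^U = w\Phi_+ \cap U$ transports this excess to an excess for $v = \fl_U(w)$ inside $W^U$. In type $A$ this recovers the Lakshmibai-Sandhya Theorem: a $3412$ or $4231$ pattern in $w$ supplies a subsystem of type $A_3$, whose Dynkin diagram is stellar. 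In the exotic types one uses the remaining stellar diagrams $B_2, G_2, B_3, C_3, D_4$ to supply the corresponding minimal singular patterns.

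The main obstacle is proving that any excess of reflections in $W$ can in fact be localized inside a rank-at-most-four subsystem whose Dynkin diagram is stellar. This will require a type-by-type Lakshmibai-Sandhya-style classification of the minimal singular elements in each stellar Weyl group, together with a uniform argument showing that any excess at the identity of $X_w$ already arises from a configuration of roots spanning a stellar subsystem. Once this combinatorial input is available, the translation between reflection excesses in $W$ and in $W^U$ is essentially bookkeeping under the half-space realization of $\fl_U$. The classification is where the real work lies; the compatibility with the pattern map is the easier half.
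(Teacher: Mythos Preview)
Your forward direction has a gap. You claim that Theorem~4.1(b) gives ``$r \in R^U$ and $r \leq' v = \fl_U(w)$ implies $r \leq w$,'' but (b) only compares two elements in the \emph{same} $W^U$-coset: it says $\fl(x) \leq' \fl(w'x)$ forces $x \leq w'x$ for $w' \in W^U$. Writing $w = w' x_0$ with $x_0$ the minimal coset representative, (b) applied with $x = r x_0$ yields $r x_0 \leq w$, not $r \leq w$, and there is no reason $r \leq r x_0$ in general for a non-standard parabolic. Even if the implication were available, you should spell out the final step: smoothness at the identity means $\{r \in R : r \leq w\}$ coincides with the inversion set of $w$, so any $r \in R^U$ with $r \leq w$ is an inversion of $w$, hence of $v$; only then does the count collapse. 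The paper handles this direction by an entirely different, geometric route (stated separately as the Billey--Braden theorem): realize $G^U/B^U$ as the fixed-point scheme of a subtorus acting on $G/B$, and invoke the Fogarty--Norman theorem that the torus-fixed locus of a smooth scheme is smooth.

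For the backward direction you correctly flag that the substance lies in localizing a singularity inside a stellar subsystem, but you should not expect a uniform argument. The paper's proof is resolutely type-by-type: for classical types it uses Lakshmibai's explicit tangent-space bases together with a Gasharov-style factorization of Poincar\'e polynomials; for $G_2$ and $F_4$ it uses Kumar's nil-Hecke criterion checked by computer; and for $E_8$ it requires a massive parallel computation over all $696{,}729{,}600$ elements, comparing low- and high-degree Poincar\'e coefficients for elements containing a bad pattern and applying the factorization algorithm to the rest. Your sketch of ``select a subspace $U$ witnessing the excess'' is the right picture, but no conceptual proof that stellar subsystems always suffice is known --- that is precisely the open problem the paper raises after the Billey--Braden theorem.
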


Here $G^U$ is a semisimple Lie group with Weyl group $W^U$ and $B^U$
is one of its Borel subgroups.  We remark that $W^{U}$ might not be
the same type as $W$. For example, for Weyl groups of type $C$ and $D$
there will appear $W^{U}$ of type $A$.  In fact, the type $A$ singular
patterns are most common.  If the Coxeter graph of $W$ has only edges
labeled by 3's, we say $W$ is \textit{simply laced}.  If $W$ is simply
laced, then all of its parabolic subgroups are also simply laced.

It turns out that there are very few patterns for which the
corresponding Schubert varieties are singular among stellar reduced,
irreducible Weyl groups; 2 patterns in $A_{3}$, 1 pattern in $B_{2}$,
6 patterns in $B_{3}$ and $C_{3}$, 1 pattern in $D_{4}$, 5 patterns in
$G_{2}$.  Note that all Schubert varieties of type $A_2$ are smooth.

\begin{ex}
In type $B_{n}$ using the classical pattern avoidance on signed permutations, the smooth Schubert varieties are classified by avoiding the following 17 patterns
\begin{align*}
   &(-2, -1), \\
   &(1, 2, -3), \ (1, -2, -3), \ (-1, 2, -3), \ (2, -1, -3), \ (-2, 1, -3), \ (3, -2, 1) \\
   &(2, -4, 3, 1), \ (-2, -4, 3, 1), \ (3, 4, 1, 2), \ (3, 4, -1, 2), \ (-3, 4, 1, 2) \\
   &(4, 1, 3, -2), \ (4, -1, 3, -2), \ (4, 2, 3, 1), \ (4, 2, 3, -1), \ (-4, 2, 3, 1).
\end{align*}
All length 4 patterns come from $A_{3}$ root subsystems.   
\end{ex}

\begin{ex}
  In type $D_{4}$, there are 49 singular Schubert varieties, and the
  only element which does not comes from $A_{3}$ root subsystems is
  $w= s_{2} \cdot s_{1}s_{3}s_{4} \cdot s_{2} = \bar{1} 4 \bar{3}2$.
  Thus, for all simply laced types, there are only 3 bad patterns to
  consider: $3412, 4231$, and $\bar{1} 4 \bar{3}2$.  It is instructive
  to look at the singular locus of the Schubert varieties for each of
  these 3 patterns:
\begin{align*}
&\mathrm{Sing} \ X_{s_{2} s_{1}s_{3}  s_{2}} = X_{s_{2}}  &(3412 \text{ case}), \\
&\mathrm{Sing} \ X_{s_{3} s_{1}s_{2}  s_{1} s_{3}} = X_{s_{1} s_{3}} &(4231 \text{ case}), \\
&\mathrm{Sing} \ X_{s_{2} s_{1}s_{3}s_{4}  s_{2}} = X_{s_{2}} & (\bar{1} 4 \bar{3}2 \text{ case}).
\end{align*}
\end{ex}

\bigskip

As we mentioned in Section~\ref{s:properties}, the definition of a
Kazhdan-Lusztig polynomial $P_{v,w}(t)$ easily generalizes to all
Coxeter groups.  We use these polynomials to define the notion of a
rationally smooth Schubert variety.  This avoids the more general
definition in terms of \'etale cohomology.  

\begin{defn}
  A point $v \in X_w$ is \textit{rationally smooth} if and only if
  $P_{v,w}(t)=1$.  A Schubert variety $X_w$ is \textit{rationally
    smooth} if every point of $X_w$ is rationally smooth.
\end{defn}


The following theorem as stated is due to Carrell and Peterson.
Related results also appear in Jantzen's book \cite[Ch.5]{jantzen} in
slightly different language.

\begin{thm}
 \cite{carrell94}
The following are equivalent.
\begin{enumerate}
\item $X_{w}$ is rationally smooth at $v$.
\item $P_{v,w}(t)=1$
\item The Bruhat graph on $[v,w]$ is regular of degree $l(w)-l(v)$.  
\end{enumerate}
\end{thm}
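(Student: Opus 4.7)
The equivalence $(1)\Leftrightarrow(2)$ holds by the definition of rational smoothness given immediately above the theorem. The substance of the theorem is $(2)\Leftrightarrow(3)$, which I would prove by introducing the \emph{local Poincar\'e polynomial}
$$\mathrm{Poin}_{v,w}(t) \;:=\; \sum_{u\,:\, v \leq u \leq w} t^{\ell(u)-\ell(v)}$$
and establishing the chain of equivalences $(2) \Leftrightarrow \mathrm{Poin}_{v,w}(t)$ is palindromic $\Leftrightarrow (3)$.

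For $(2) \Rightarrow$ palindromicity: first apply the monotonicity theorem of Irving and Braden--MacPherson (cited in the excerpt after Property 1), which says that coefficients of $P_{x,w}$ are monotone decreasing along $x \in [v,w]$. Combined with $P_{u,w}(1) \geq 1$, this forces $P_{u,w}(t) = 1$ for every $u \in [v,w]$. Via the Kazhdan--Lusztig--Beilinson--Bernstein--Brylinski--Kashiwara theorem $P_{u,w}(t) = \sum_i \dim \mathcal{IH}^{2i}_u(X_w)\, t^i$, triviality of all these polynomials means the intersection cohomology sheaves are locally constant on the open set $\bigcup_{u \in [v,w]} C_u$, so there the intersection cohomology coincides with ordinary cohomology. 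Poincar\'e duality for rationally smooth spaces then gives palindromicity of $\mathrm{Poin}_{v,w}(t)$. For the reverse direction, the Kazhdan--Lusztig inversion formulas together with the nonnegativity (now known in full generality by Elias--Williamson) and the length-degree bound on $P_{v,w}$ allow one to back out $P_{v,w}(t) = 1$ from palindromicity.

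For palindromicity $\Leftrightarrow (3)$, the argument is purely combinatorial. In one direction, if the Bruhat graph on $[v,w]$ is regular of degree $d = \ell(w)-\ell(v)$, then each vertex $u$ has exactly $d$ incident edges, which, via a carefully chosen reflection (for example a well-chosen generic ``hyperplane section'' in the reflection representation), can be organized into a length-reversing involution $u \mapsto u^{\star}$ on $[v,w]$ satisfying $\ell(u^{\star}) - \ell(v) = \ell(w) - \ell(u)$. This yields palindromicity at once. Conversely, assuming $\mathrm{Poin}_{v,w}(t)$ is palindromic, one compares the coefficient of $t^k$ with that of $t^{d-k}$; a local degree deficit at any $u \in [v,w]$ would create a mismatch in these coefficient counts via the recursive structure of Bruhat intervals (using the lifting property and strong exchange condition), contradicting palindromicity.

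The hard part is the combinatorial equivalence $(3)\Leftrightarrow$ palindromicity, especially the converse direction. Producing the pairing from regularity requires a genuine use of the geometry of the reflection representation and the structure of the Bruhat graph as a $W$-module; this is the technical core of the Carrell--Peterson argument and is substantially more delicate than the parallel analysis via $T$-equivariant cohomology, where regularity of the moment graph at a fixed point controls rational smoothness via GKM theory as alluded to in the discussion after Property~1.
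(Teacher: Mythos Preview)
The paper does not prove this theorem. It is a survey article, and the statement is attributed to Carrell--Peterson with the citation \cite{carrell94} and no accompanying argument; the surrounding text simply says ``The following theorem as stated is due to Carrell and Peterson.'' So there is no ``paper's own proof'' to compare against.

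That said, your sketch has real gaps that would not survive scrutiny even as an independent proof. The most serious is your claim that regularity of the Bruhat graph on $[v,w]$ yields a length-reversing involution $u\mapsto u^{\star}$ on the interval. No such involution exists in general, and nothing in the reflection representation or in ``a generic hyperplane section'' produces one; Bruhat intervals that are rank-symmetric are typically not self-dual as posets, and regularity of the Bruhat graph does not change this. The actual Carrell--Peterson argument does not construct any involution: it passes through the $R$-polynomials (equivalently, Deodhar's recursion for $P_{x,w}$) and a careful induction using reflection orderings on the edges of the Bruhat graph to compare the two sides of the palindromicity identity. Your converse direction (palindromicity $\Rightarrow$ regularity) is likewise only a heuristic; saying that ``a local degree deficit would create a mismatch via the lifting property'' is not an argument, and in fact the difficulty here is precisely that the degree of a single vertex is not visibly encoded in the rank generating function.

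Your $(2)\Rightarrow$ palindromicity direction is in better shape: monotonicity of the $P_{x,w}$ does force $P_{u,w}=1$ for all $u\in[v,w]$, and from there Poincar\'e duality for intersection cohomology gives palindromicity. But note that this route uses deep inputs (Irving/Braden--MacPherson monotonicity, decomposition theorem or Elias--Williamson) that postdate and logically subsume the Carrell--Peterson statement you are trying to prove; it is circular to invoke them if the goal is an honest proof of the 1994 result.
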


In the next theorem, the third condition is due to Carrell-Peterson
\cite{carrell94}.  The fourth condition combines work of
Garsharov~\cite{gasharov97} in type $A$, \cite{B3} for types $B$ and
$C$, then it was conjectured to hold for all Weyl groups by McGovern
and proved by Akyildiz-Carrell \cite{AkyilizCarrell} for types $D$ and
$E$.  It can be checked by computer for $F_4$ and $G_2$ can be done
easily.  The next condition is due to Oh-Yoo \cite{oh.yoo.2010}.  The
last condition is due to Slofstra \cite{slofstra.2013}.
\begin{thm}
The following are equivalent for all Weyl groups.  
\begin{enumerate}
\item $X_{w}$ is rationally smooth.
\item $P_{id,w}(t)=1$
\item $P_{w}(t) = \sum_{v\leq w} t^{\ell(v)}$ is palindromic.  
\item $P_{w}(t) = \prod (1+t+t^{2}+\dots +t^{e_{i}})$ 
\item The Poincar\'{e} polynomial $P_w(t)$ is equal to the generating
  function $R_w(t)$ for the number of regions $r$ in the complement of
  the inversion hyperplane arrangement for $w$ weighted by the
  distance of each region to the fundamental region.
\item The inversion arrangement for $w$ is free and the number of
  chambers of the arrangement is equal to the size of the Bruhat
  interval $[id,w]$.
\end{enumerate}
\end{thm}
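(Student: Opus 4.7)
The plan is to treat condition (2), namely $P_{\mathrm{id},w}(t)=1$, as the hub and show each other condition is equivalent to it. First I would prove (1) $\Leftrightarrow$ (2). The preceding theorem already gives that $X_w$ is rationally smooth at $v$ iff $P_{v,w}(t)=1$, so what remains is to see that rational smoothness at the single torus-fixed point $\mathrm{id}$ propagates to every point of $X_w$. For this I would use that the rationally smooth locus is open, $B$-stable, and nonempty, combined with the observation that $\mathrm{id}$ lies in the closure of every Schubert cell $C_v$ with $v\leq w$, so rational smoothness at $\mathrm{id}$ spreads across all of $X_w$.

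Next, (2) $\Leftrightarrow$ (3) is Carrell-Peterson. In one direction, $P_{\mathrm{id},w}(t)=1$ together with the preceding theorem says the Bruhat graph at $\mathrm{id}$ is regular of degree $\ell(w)$, and I would combine this with the Carrell-Peterson recursion relating $P_w(t)$ to the graded sizes of $[\mathrm{id},w]$ to conclude palindromy. For the converse, palindromy of $P_w(t)$ yields Poincar\'e duality for $H^{*}(X_w;\mathbb{Q})$, and I would invoke the Kazhdan-Lusztig interpretation $P_{v,w}(q)=\sum_i \dim \mathcal{IH}_v^{i}(X_w)\,q^{i}$ from the excerpt to deduce that intersection cohomology collapses to ordinary cohomology, forcing $P_{v,w}(t)=1$ for all $v\leq w$. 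The easy implication (4) $\Rightarrow$ (3) is immediate, since each $1+t+\cdots+t^{e_i}$ is palindromic and products of palindromic polynomials are palindromic.

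The hard step is (3) $\Rightarrow$ (4): whenever $P_w(t)$ is palindromic it should actually factor as a product of such terms. I expect this to be the main obstacle, since all existing proofs proceed by type. My plan is to split into cases. In type $A$ I would follow Gasharov and exhibit an iterated fiber-bundle structure: regularity of the Bruhat graph lets one identify a maximal parabolic $P_k$ such that the projection $X_w\to X_w/P_k$ is a fiber bundle whose base and fiber are smooth Schubert varieties in a Grassmannian and a smaller flag variety respectively; the Grassmannian Poincar\'e polynomial is a single Gaussian polynomial, and iterating produces the factorization. In types $B$ and $C$ I would adapt the same fiber-bundle approach to signed permutations; in types $D$ and $E$ I would appeal to the Akyildiz-Carrell argument that leverages the Bruhat graph at $\mathrm{id}$; in $F_4$ and $G_2$ a finite computer check suffices.

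Finally, (3) $\Leftrightarrow$ (5) and (3) $\Leftrightarrow$ (6) are the inversion-arrangement statements. For (5), following Oh-Yoo, I would match $R_w(t)$ to $P_w(t)$ by deletion-restriction on inversion hyperplanes, verifying that both sides transform identically when $X_w$ is rationally smooth. For (6), following Slofstra, I would identify the exponents $e_i$ of the factorization in (4) with the degrees of a homogeneous free basis for the module of derivations of the complexified inversion arrangement, using that freeness plus the chamber count $|[\mathrm{id},w]|$ force the factored palindromic form. A uniform, type-independent proof of (3) $\Rightarrow$ (4) would subsume much of this work; its absence is precisely why that step remains the principal difficulty.
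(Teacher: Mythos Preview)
Your proposal is essentially correct and mirrors the paper's treatment: the paper does not prove this theorem but simply attributes each equivalence to its source in the literature (Carrell--Peterson for (3), Gasharov in type $A$ and Billey \cite{B3} in types $B,C$ and Akyildiz--Carrell in types $D,E$ for (4), Oh--Yoo for (5), Slofstra for (6)), and your hub-and-spoke outline follows precisely this decomposition. One small inaccuracy: the type-$A$ and type-$B,C$ arguments you sketch for (3) $\Rightarrow$ (4) are described as fiber-bundle arguments, but Gasharov's original proof and Billey's extension are purely combinatorial factorizations of the rank-generating function of $[\mathrm{id},w]$; the fiber-bundle interpretation you describe is the later geometric reformulation due to Ryan, Wolper, and Gasharov--Reiner (and now Richmond--Slofstra in all types).
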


\bigskip

For all finite Weyl groups, rational smoothness can be characterized
by pattern avoidance.

\begin{thm}\label{BP rat sm for Schubert}
(Billey-Postnikov \cite{BP-smooth})
$X_{w}$ is rationally smooth if and only for every \textit{stellar} parabolic subgroup $W^{U}$,
$X_v$ for $v=\fl_U(w)$ is rationally smooth in $G^{U}/B^{U}$.
\end{thm}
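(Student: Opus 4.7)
The plan is to prove both directions by exploiting the characterizations of rational smoothness in terms of palindromic Poincaré polynomials and regularity of Bruhat graphs, combined with parabolic decomposition techniques. The forward implication is the easier direction, while the backward one requires a careful inductive argument on the rank of $W$.

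For the forward direction, suppose $X_w$ is rationally smooth, fix a subspace $U$ giving a stellar parabolic $W^U$, and set $v = \fl_U(w)$. I would use the criterion that $X_w$ rationally smooth is equivalent to the Bruhat graph being regular of degree $\ell(w)$ at every vertex $u \leq w$. The $W^U$-equivariance property (a) of the pattern map together with the lifting property (b) from Theorem \ref{coset theorem} means that Bruhat covers in $[\id, v] \subset W^U$ lift to Bruhat covers in $[\id, w] \subset W$ along reflections in $R \cap W^U$. Transferring the regularity condition through this correspondence shows that the Bruhat graph in $W^U$ restricted to $[\id, v]$ is also regular of the correct degree, hence $X_v$ is rationally smooth in $G^U/B^U$.

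For the backward direction, I would proceed by strong induction on $|S|$, the rank of $W$. When $W$ is itself stellar, the conclusion is immediate by taking $U = V$ and $\fl_U = \mathrm{id}$. For the inductive step, assuming all stellar flattenings of $w$ are rationally smooth, the goal is to produce a \emph{Billey--Postnikov (BP) decomposition} of $w$ relative to some maximal parabolic $W_J$, i.e.\ a factorization $w = u \cdot v$ with $u \in W_J$, $v$ the minimal length coset representative of $wW_J$, such that the Poincaré polynomial factors multiplicatively:
\[
P_w(t) = P_u(t) \cdot P_v(t).
\]
Given such a decomposition, palindromicity of $P_w(t)$ reduces to palindromicity of each factor. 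The element $u$ lives in the smaller Weyl group $W_J$, whose stellar parabolic subgroups are a subset of those of $W$, so the inductive hypothesis applies. The element $v$ is a Grassmannian (at most one descent) minimal coset representative, whose rational smoothness can be detected directly by a single stellar pattern corresponding to the "node of descent" together with its neighbors in the Dynkin diagram.

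The main obstacle is the existence of the BP decomposition itself. Producing a maximal parabolic $W_J$ that yields a multiplicative factorization of $P_w(t)$, starting only from the hypothesis that all stellar flattenings are rationally smooth, requires combinatorial work: one must identify, via the palindromicity information on stellar restrictions, a simple reflection $s$ whose omission from $S$ gives a parabolic $W_J$ whose Bruhat structure on $[\id, w]$ really does split as a product. The stellar hypothesis is crucial because stellar subgroups are precisely those that detect, for each candidate node $s$, the pairwise interactions between $s$ and all other simple reflections—exactly the data needed to verify that the coset structure $wW_J$ behaves multiplicatively. I expect the proof to ultimately reduce to checking, type by type using the Dynkin diagram classification, that rational smoothness of all stellar patterns forces the existence of such a "good" node, completing the induction via the factorization of Poincaré polynomials.
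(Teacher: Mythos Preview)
Your approach differs substantially from the paper's, and the gap you yourself flag is genuine and not easily closed.

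The paper's proof is not a uniform inductive argument via parabolic decomposition. For the backward direction it proceeds type by type: an analog of Gasharov's factorization theorem handles the classical types $B$, $C$, $D$ (and $A$); Kumar's criterion in the nil-Hecke ring is used to verify $G_2$ and $F_4$ by computer; and $E_8$ is handled by a massive parallel computation over all $696{,}729{,}600$ elements, checking palindromicity of $P_w(t)$ coefficient by coefficient when $w$ contains a bad pattern, and applying the Gasharov-type factorization otherwise. There is no claim that a Billey--Postnikov decomposition exists whenever all stellar flattenings are rationally smooth; indeed, the need for the $E_8$ brute-force search is precisely because no such structural argument was available at the time. Your ``main obstacle'' is the entire content of the theorem in the hard direction, and your expectation that it reduces to a type-by-type Dynkin check does not distinguish your plan from the paper's actual computation --- except that the paper carries it out, and you do not. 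The decomposition approach you sketch is much closer in spirit to the later work of Richmond--Slofstra (cited in the paper as \cite{richmond.slofstra.2014}), which postdates the Billey--Postnikov result.

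For the forward direction, your Bruhat-graph argument is also not the paper's route and is underspecified: property (b) of the pattern map only gives one implication between the orders, so transferring \emph{regularity} of the Bruhat graph (not just lifting individual edges) requires more than you have written. The paper instead uses the Kazhdan--Lusztig inequality of Billey--Braden (Corollary~\ref{cor:BB}), namely $P_{id,w}(1) \geq P^U_{\fl(id),\fl(w)}(1)$, which immediately forces $P^U_{id,\fl_U(w)}(1)=1$ whenever $P_{id,w}(1)=1$.
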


Note that there are only 2 patterns in $A_{3}$, 6 patterns of type
$B_{3}$ and $C_{3}$, 1 pattern of type $D_{4}$ which should be avoided
by $w$ in order for $X_w$ to be rationally smooth.  The Coxeter
pattern map made a very large reduction in the number of patterns one
needs to remember for both smoothness and rational smoothness.

\bigskip

\begin{rem}
  Smoothness implies rational smoothness.  In terms of the patterns
  characterization, the difference between smoothness and rational
  smoothness for all Weyl group types is just 6 additional patterns, 1
  pattern in $B_{2}$ and 5 patterns of type $G_{2}$.
\end{rem}


\noindent
\textit{Outline of proof of Theorems~\ref{BP sm for Schubert} and~\ref{BP rat sm for Schubert}}

\begin{itemize}
\item Step 1: For classical types $B,C,D$, use Lakshmibai's
  characterization of the tangent space basis to get the general
  smoothness results.  
\item Step 2: Use an analog of Gasharov's theorem to the factor of
  Poincar\'e polynomial for any signed permutation not containing a
  singular pattern to get the rational smoothness of $B,C,D$ which
  extends to all finite types.
\item Step 3: Use Kumar's criterion for (rational) smoothness in the
nil-Hecke ring to test $G_{2}$ and $F_{4}$ by computer.   
\item Step 4: Run a massive parallel computation on the 696,729,600
elements $w \in E_{8}$. 
\begin{itemize}
\item If $w$ has a pattern from type $A$ or $D$, calculate the coefficient of $t^{1}$ and $t^{\ell(w)-1}$ and compare, if different, $w$ is done.  If not, calculate the coefficient of $t^{2}$
and $t^{\ell(w)-2}$, etc.  Eventually one pair differed in every case.
\item If $w$ avoids all patterns from type $A$ or $D$, use analog of Gasharov's algorithm for factoring $P_{w}(t)$.
\end{itemize}
\qed
\end{itemize}

\bigskip

Note that smoothness of $X_w$ automatically implies rational
smoothness. Deodhar proved the following property for type $A$, and
later Peterson proved that it also holds for type $D$ and $E$
(unpublished).  See \cite{CK-2003} for a proof.  A proof for all
finite Weyl group types except $E_6 , E_7, E_8$ follows easily from
Theorem~\ref{BP sm for Schubert} and Theorem~\ref{BP rat sm for
  Schubert}.  For $E_6 , E_7, E_8$, the Peterson theorem is used in
the proof of these two theorems.

\begin{thm}\label{thm:ade}
\emph{(}Deodhar, Peterson, Carrel-Kuttler\emph{)} 
For types $A,D,E$, a Schubert variety $X_{w}$ is smooth if and only if it is rationally smooth.
\end{thm}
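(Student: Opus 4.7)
The plan is to leverage the stellar-parabolic reductions of Theorem~\ref{BP sm for Schubert} (smoothness) and Theorem~\ref{BP rat sm for Schubert} (rational smoothness). Both reduce the property for $X_w$ to the corresponding property for $X_{\fl_U(w)}$ as $W^U$ runs over all stellar parabolic subgroups of $W$. The decisive observation is that if $W$ is simply laced, then every parabolic subgroup $W^U$ is simply laced, so the only stellar types that can arise are $A_1, A_2, A_3$, and $D_4$; in particular no $B_2, G_2$, or $F_4$ subsystems appear.

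The first step is to handle the base cases by direct finite computation. In $A_1$ and $A_2$ every Schubert variety is smooth, so there is nothing to check. In $A_3=S_4$, the Lakshmibai-Sandhya Theorem~\ref{Lak-San} gives that the singular elements are exactly $\{3412, 4231\}$. The Poincar\'e polynomials $P_{3412}(t)=1+3t+5t^2+4t^3+t^4$ and $P_{4231}(t)=1+3t+5t^2+5t^3+3t^4+t^5$ (computable from Figure~\ref{Interval [id,4231]}) are not palindromic, so by the Carrell--Peterson criterion these same two permutations are the only rationally singular elements of $S_4$. In $D_4$, the list of singular patterns given above (only $\bar 1 4\bar 3 2$ outside the $A_3$ patterns) can be matched against the classification of rationally singular elements of $D_4$ by a direct check; this establishes smoothness $\Longleftrightarrow$ rational smoothness for every Schubert variety in $A_1, A_2, A_3, D_4$.

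For arbitrary $w$ in a simply laced type $W$ (i.e.\ $A_n$ or $D_n$), combining the two BP reductions gives
\[
X_w \text{ smooth} \iff \forall\, W^U \text{ stellar},\ X_{\fl_U(w)} \text{ smooth in } G^U/B^U,
\]
\[
X_w \text{ rat.\ smooth} \iff \forall\, W^U \text{ stellar},\ X_{\fl_U(w)} \text{ rat.\ smooth in } G^U/B^U.
\]
Since each $W^U$ is of type $A_k$ ($k\le 3$) or $D_4$, the base-case equivalence applies term-by-term, and the two right-hand sides coincide. This proves the theorem for types $A$ and $D$.

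The subtle part is type $E$. As noted after the theorem statement, the proof of Theorem~\ref{BP rat sm for Schubert} for $E_6,E_7,E_8$ itself invokes Peterson's result, so invoking the BP reduction for $E$ would be circular. The hard step is therefore to give an independent argument for $E_6, E_7, E_8$. The approach is to follow the Carrell--Kuttler strategy \cite{CK-2003}: use the Bruhat-graph/tangent-space description of rational smoothness together with the fact that the simply laced root system has no short roots to show that every rationally smooth point is actually smooth, via an analysis of the $T$-stable curves at each fixed point $v\le w$. Once this independent argument for $E$ is in place, it feeds back into the proofs of Theorems~\ref{BP sm for Schubert} and \ref{BP rat sm for Schubert} for exceptional types, completing the circle. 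The main obstacle is precisely this $E$-case: managing the exceptional root geometry without appealing to the very pattern-avoidance machinery whose proof depends on Theorem~\ref{thm:ade} itself.
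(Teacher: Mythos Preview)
Your overall strategy matches the paper's own discussion exactly: derive the $A$ and $D$ cases from the Billey--Postnikov reductions (Theorems~\ref{BP sm for Schubert} and~\ref{BP rat sm for Schubert}) together with the observation that simply laced groups have only simply laced parabolic subgroups, and note that the $E$ case cannot be obtained this way because the proof of the BP theorems in type $E$ already invokes Peterson's result, so one must appeal to the independent Carrell--Kuttler argument \cite{CK-2003}. The paper does not give more detail than this, so your sketch is at the right level.

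There is, however, a computational slip that would sink your $A_3$ base case as written. You claim $P_{4231}(t)=1+3t+5t^2+5t^3+3t^4+t^5$, but this polynomial \emph{is} palindromic, so it cannot witness rational singularity of $X_{4231}$. The correct count of the interval $[id,4231]$ (there are $20$ elements: everything in $S_4$ except $3412$, $3421$, $4312$, $4321$) gives
\[
P_{4231}(t)=1+3t+5t^2+6t^3+4t^4+t^5,
\]
which is indeed not palindromic. With this correction your base-case argument goes through.
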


A new proof of Theorem~\ref{thm:ade} has recently been announced by
Richmond and Slofstra \cite{richmond.slofstra.2014}.  In fact, they
show that every rationally smooth Schubert variety in any finite Lie
type is an iterated fibre bundle of Grassmannians.  This generalizes
the work in type $A$ by Ryan \cite{ryan}, Wolper \cite{Wolper}, and
Gasharov-Reiner \cite{GR2000} mentioned in Pattern Property 1.

Note that smoothness and rational smoothness are not equivalent for
affine type $\widetilde{A}_{n}$ by Mitchell \cite{Mitchell-86} and
Billey-Crites \cite{Billey-Crites}.

\bigskip

The definition of the Coxeter pattern map also has applications to the 
geometry of Schubert varieties for Weyl groups and affine Weyl
groups. Once again, let $U\subset V$ be a linear subspace. We denote
by $M(x,w; U)$ the set of maximal elements in $[id,w] \cap W^{U}x$
with respect to a new partial order $\leq_{x}$ defined by
\[
wx \leq_{x} w'x  \quad \text{if} \quad \fl(wx) \leq^{U} \fl (w'x).
\]

\begin{thm}\label{thm:BB}
\emph{(}Billey-Braden \cite{BiBr}\emph{)}
If $x,w \in W$, then 
\[
P_{x,w}(1)\geq
\sum_{y \in M(x,w;U)} P_{y,w}(1)P^{U}_{\fl(x),\fl(y)}(1).
\]
\end{thm}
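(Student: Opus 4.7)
The plan is to prove this inequality geometrically via Braden's hyperbolic localization functor applied to the intersection cohomology sheaf of $X_w$, using the fact that the pattern map $\fl$ admits a natural interpretation in terms of $\mathbb{C}^*$-fixed components of the flag variety. First I would geometrize $\fl$ by choosing a cocharacter $\lambda\colon \mathbb{C}^*\to T$ whose differential lies in $U^\perp \subset V$ and which is generic outside $\Phi^U$, so that the roots annihilated by $\lambda$ are precisely $\Phi^U$. The fixed-point locus $(G/B)^{\lambda}$ then decomposes as a disjoint union of $G^U$-orbits, each isomorphic to $G^U/B^U$ and indexed by the cosets $W/W^U$. On the component through $xB$, the point $xB \in G/B$ is identified with $\fl(x)B^U \in G^U/B^U$, and the restriction of Bruhat order on $G/B$ to the $T$-fixed points of this component coincides with the order $\leq_x$ pulled back from $\leq^U$ on $W^U$ via $\fl$. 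The uniqueness clause in Theorem~\ref{coset theorem} shows that this geometric definition of $\fl$ matches the combinatorial one.

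Next I would apply Braden's theorem on hyperbolic localization. Let $s\colon (G/B)^{\lambda}\hookrightarrow G/B$ be the inclusion and let $L$ be the associated hyperbolic localization functor $s^{!}\iota^{*}$, where $\iota$ is the inclusion of the attracting cells for $\lambda$. Braden's theorem asserts that $L(IC(X_w))$ is a pure perverse sheaf and that its stalk at any $T$-fixed point $p$ is bounded above in total dimension by $\dim IC(X_w)_p$. On the component $G^U \cdot xB \cong G^U/B^U$, the decomposition theorem then yields
\[
L(IC(X_w))\big|_{G^U/B^U} \;\cong\; \bigoplus_{\substack{y \leq w \\ y \in W^U x}} IC\bigl(X^U_{\fl(y)}\bigr)^{\oplus m_y}[\text{shifts}],
\]
and the multiplicity $m_y$, computed by restriction to a generic point of $X^U_{\fl(y)}$, equals the total Poincar\'e-stalk dimension of $IC(X_w)$ at $y$, namely $P_{y,w}(1)$. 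Taking the stalk at $\fl(x)$ and specializing $t = 1$ in each IC stalk on $G^U/B^U$ yields
\[
\dim L(IC(X_w))_{\fl(x)} \;=\; \sum_{\substack{y \leq w \\ y \in W^U x}} P_{y,w}(1)\, P^U_{\fl(x),\fl(y)}(1).
\]

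Combining Braden's stalk upper bound $P_{x,w}(1) = \dim IC(X_w)_x \geq \dim L(IC(X_w))_{\fl(x)}$ with the formula above yields the inequality summed over the full coset $[id,w]\cap W^U x$. Restricting this sum to $M(x,w;U)$ only drops non-negative terms: every non-maximal $y$ satisfies $\fl(y) \leq^U \fl(y')$ for some $y' \in M(x,w;U)$, so the restriction preserves the essential content while producing the cleaner combinatorial lower bound in the statement.

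The main obstacle is the identification of the multiplicities $m_y$ with $P_{y,w}(1)$. This requires a compatibility between the Bia{\l}ynicki-Birula attracting stratification for $\lambda$ and the Schubert stratification of $X_w$, ensuring that a transverse slice to the $G^U$-orbit through $y$ recovers the local intersection cohomology at $y$ whose Poincar\'e polynomial is $P_{y,w}(q)$. A secondary subtlety is confirming that Braden's stalk inequality holds in the needed form, which uses purity of $IC(X_w)$ together with the torus-equivariance of all the constructions involved; this is where Braden's comparison between the $s^{!}\iota^{*}$ and $s^{*}\iota^{!}$ versions of the hyperbolic localization functor plays a crucial role.
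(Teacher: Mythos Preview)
The survey paper does not give its own proof of this theorem; it merely states the result and cites the original Billey--Braden paper \cite{BiBr}. Your proposal is in fact a faithful outline of the argument in that source: geometrize $\fl$ via a generic one-parameter subgroup $\lambda$ with $\Phi^U$ as its zero-weight roots, identify the $\lambda$-fixed component through $xB$ with $G^U/B^U$, apply Braden's hyperbolic localization to $IC(X_w)$, use purity and the decomposition theorem on the fixed component, and compare stalk dimensions. So there is nothing to contrast against in the present paper, and your strategy matches the cited one.

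One technical point deserves tightening. You write the decomposition of $L(IC(X_w))$ on $G^U/B^U$ as a sum over \emph{all} $y\le w$ in the coset $W^Ux$, assert $m_y = P_{y,w}(1)$ for each, and then say that restricting to $M(x,w;U)$ merely drops nonnegative terms. In the actual argument the irreducible components of $X_w^{\lambda}$ on that fixed component are exactly the $X^U_{\fl(y)}$ for $y\in M(x,w;U)$, so the decomposition theorem naturally produces IC summands supported on those closures (together with possible summands on smaller strata, which contribute further nonnegative terms). The identification of the multiplicity with $P_{y,w}(1)$ comes from computing the stalk of the hyperbolic localization at a \emph{generic} point of such a component, and that computation is only available for maximal $y$; for non-maximal $y$ there is no component whose generic point is $yB$, so the formula $m_y=P_{y,w}(1)$ is not well-posed there. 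This does not damage the argument---the inequality you want already follows once you restrict to the maximal $y$'s---but the intermediate ``full coset'' inequality as stated is not what the method gives.
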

 
\bigskip

\begin{cor}\label{cor:BB} \cite{BiBr} For all $x<w$,  
$P_{x,w}(1)\geq P^{U}_{\fl(x),\fl(w)}(1)$.
\end{cor}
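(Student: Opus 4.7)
The strategy is to deduce this corollary from Theorem \ref{thm:BB} by isolating a single favorable summand. By Theorem \ref{Kazhdan-Lusztig basis} combined with the Elias-Williamson positivity theorem recorded earlier, every Kazhdan-Lusztig polynomial $P_{y,w}$ has nonnegative integer coefficients and constant term $1$, so $P_{y,w}(1) \geq 1$ for all $y \leq w$. The right-hand side of Theorem \ref{thm:BB} is therefore a sum of nonnegative terms, each bounded below by $P^U_{\fl(x), \fl(y)}(1)$. It consequently suffices to produce a single element $y_0 \in M(x,w;U)$ with $\fl(y_0) = \fl(w)$.

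To locate such a $y_0$, the plan is to exploit the $W^U$-equivariance of $\fl$ in Theorem \ref{coset theorem}(a): since $\fl(w' x) = w' \fl(x)$ for every $w' \in W^U$, the restriction $\fl|_{W^U x}$ is a bijection from the coset $W^U x$ onto $W^U \fl(x) = W^U$. Let $y_0 \in W^U x$ be the unique preimage of $\fl(w)$, that is,
\begin{equation*}
y_0 \; = \; \bigl( \fl(w)\, \fl(x)^{-1} \bigr)\, x.
\end{equation*}

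I then have to verify $y_0 \in M(x,w;U)$. Maximality of $y_0$ under $\leq_x$ within $[\mathrm{id}, w] \cap W^U x$ should follow from Bruhat monotonicity of the pattern map, namely $y \leq w \Rightarrow \fl(y) \leq^U \fl(w)$; this monotonicity is expected from the half-space / root-subsystem description of $\fl$ recalled just before Theorem \ref{coset theorem}, since the inversion set $w\Phi_+ \cap \Phi_-$ intersected with $U$ is exactly the inversion set of $\fl(w)$ inside $\Phi^U$. Granted monotonicity, any $y \in [\mathrm{id}, w] \cap W^U x$ satisfies $\fl(y) \leq^U \fl(w) = \fl(y_0)$, i.e.\ $y \leq_x y_0$, making $y_0$ the unique maximum. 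The harder half, which I expect to be the main technical obstacle, is membership: $y_0 \leq w$. For this I would fix a reduced word $s_{i_1}\cdots s_{i_p}$ for $w$ and use Chevalley's subword criterion: starting from a subword expressing $x$ (which exists since $x \leq w$) I would modify it stepwise to reach a subword expressing $y_0$, each modification governed by a covering relation in the interval $[\fl(x), \fl(w)]^U \subseteq W^U$ which is transported back into the coset $W^U x \subseteq [\mathrm{id}, w]$ via property (b) of Theorem \ref{coset theorem}, applying the classical Lifting Lemma for Bruhat order at each step to stay below $w$.

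With $y_0 \in M(x,w;U)$ in hand, plugging the single corresponding summand into Theorem \ref{thm:BB} yields
\begin{equation*}
P_{x,w}(1) \; \geq \; P_{y_0,w}(1) \cdot P^U_{\fl(x), \fl(y_0)}(1) \; \geq \; 1 \cdot P^U_{\fl(x), \fl(w)}(1),
\end{equation*}
completing the proof.
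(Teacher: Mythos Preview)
Your strategy of isolating one summand from Theorem~\ref{thm:BB} is natural, but the argument breaks at the monotonicity step. The claim ``$y \leq w \Rightarrow \fl(y) \leq^{U} \fl(w)$'' is \emph{false}. Your justification confuses Bruhat order with weak order: it is true that $\mathrm{Inv}(\fl(w)) = \mathrm{Inv}(w) \cap \Phi^{U}$, but $y \leq w$ in Bruhat order does \emph{not} imply $\mathrm{Inv}(y) \subseteq \mathrm{Inv}(w)$ (that would be left weak order). A small counterexample: take $W = S_{3}$, $W^{U} = W_{I} = \langle s_{1} \rangle$, $y = s_{1}$, $w = s_{2}s_{1}$. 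Then $y < w$, yet $\fl(y) = s_{1}$ while $\fl(w) = e$, so $\fl(y) \not\leq^{U} \fl(w)$.

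This failure is fatal for your construction of $y_{0}$. Take $x = e$, $w = s_{2}s_{1}$, $W^{U} = \langle s_{1}\rangle$ as above. Then $\fl(x) = e = \fl(w)$, so your $y_{0} = (\fl(w)\fl(x)^{-1})x = e$. But $[\mathrm{id},w]\cap W^{U}x = \{e,s_{1}\}$, and under $\leq_{x}$ the maximum is $s_{1}$ (since $\fl(s_{1}) = s_{1} >^{U} e = \fl(e)$). Hence $M(x,w;U) = \{s_{1}\}$ and $y_{0} = e \notin M(x,w;U)$. So even after you establish $y_{0} \leq w$, the element $y_{0}$ need not be maximal and thus need not contribute a summand in Theorem~\ref{thm:BB} at all. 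Note also that in this example no element of $M(x,w;U)$ has $\fl$-value equal to $\fl(w)$, so the whole plan of picking a summand with $\fl(y) = \fl(w)$ cannot succeed in general.

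The paper itself does not supply a proof of the corollary; it simply quotes both results from \cite{BiBr}. In that paper the inequality $P_{x,w}(1) \geq P^{U}_{\fl(x),\fl(w)}(1)$ is obtained geometrically (via torus fixed points and intersection cohomology), and Theorem~\ref{thm:BB} is a sharper statement coming from the decomposition theorem. The corollary is not a purely formal consequence of the displayed sum; you would need to revisit the underlying geometry rather than manipulate the inequality.
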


Historically, Theorem~\ref{thm:BB} and Corollary~\ref{cor:BB} were the
first application of simultaneous pattern embedding/flattening on two
Coxeter group elements.  For $u,v \in S_m$ and $x,y \in S_n$, if
$[u,v]$ interval pattern embeds into $[x,y]$ using indices $1\leq i_1
< \ldots < i_m\leq n$ then one can construct a subspace $U$ such that
$u=\fl_U(x), v=\fl_U(y)$ by considering all the roots indexed by
values in the set $\{x_{i_1}, \ldots, x_{i_m}\}$.  Thus,
Corollary~\ref{cor:BB} implies one direction of the stronger type $A$
result in Theorem~\ref{thm:woo.yong}.  From this point of view,
Theorem~\ref{thm:BB} and Corollary~\ref{cor:BB} were precursors to the
notion of interval pattern avoidance introduced in \cite{WooYong}.

Woo \cite{woo.2010} extended the notion of interval pattern avoidance
to other Weyl groups and proved that many of the nice properties in
\cite{WooYong} continue to hold.  In particular, the analog of
Theorem~\ref{thm:woo.yong} holds for all Weyl groups
\cite[Cor. 3.3]{woo.2010}.  Furthermore, Woo relates interval pattern
embeddings with isomorphism of Richardson varieties which are
intersections of two Schubert varieties with respect to two generic
flags.

\begin{thm} \cite[Thm. 3.1]{woo.2010} Let $W',W$ be Weyl groups.
  Suppose there is some root subsystem embedding which embeds $[u, v]
  \subset W'$ in $[x, w]\subset W$ . Then the Richardson varieties
  $X_v^u$ and $X_w^x$ are isomorphic.
\end{thm}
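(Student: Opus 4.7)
I would argue by constructing a $T$-equivariant local isomorphism between the two Richardson varieties at each torus-fixed point, and then gluing. Recall that the Richardson variety $X_w^x := X_w \cap X^x$ is a projective $T$-variety whose fixed points are indexed by the Bruhat interval $[x,w]$. A root subsystem embedding provides a canonical poset isomorphism $[u,v] \xrightarrow{\sim} [x,w]$, which I will write $y' \leftrightarrow y$; the length equality $\ell(v)-\ell(u) = \ell(w)-\ell(x)$ built into interval pattern embedding guarantees that the two Richardson varieties have the same dimension, and the embedding on roots gives a $T$-equivariant identification of weight lattices.

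\emph{Step 1: Match tangent spaces.} At a matched pair $y \leftrightarrow y'$, apply the Lakshmibai--Seshadri theorem quoted above to write both $T_y(X_w^x)$ and $T_{y'}(X_v^u)$ as spans of explicit root vectors $E_\alpha$, one for each reflection $r_\alpha$ producing a cover in the appropriate direction of the Bruhat interval (upper for $X_w$, lower for $X^x$). Because the embedding is realized by a root subsystem $\Phi' \hookrightarrow \Phi$, each such $\alpha$ at $y$ lies in the ambient copy of $\Phi'$ and matches an $\alpha' \in \Phi'$ indexing a cover of $y'$, and vice versa. Hence the $T$-weight multisets of the tangent spaces at $y$ and $y'$ coincide.

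\emph{Step 2: Promote to a local isomorphism and glue.} Choose a $T$-stable affine neighborhood of $y$ in $X_w^x$, e.g.\ by intersecting with a translate of the opposite big cell $y w_0 C_{w_0}$. In such a chart, $X_w^x$ is cut out of affine space by explicit polynomial equations whose $T$-weights and combinatorial structure are determined by the local Bruhat data around $y$ together with the ambient root system; transporting this description along the root subsystem embedding produces the analogous affine chart for $X_v^u$ near $y'$, yielding a $T$-equivariant isomorphism of the two charts. Compatibility on overlaps is automatic from $T$-rigidity (two $T$-equivariant morphisms of toric-type charts that agree on weights and on a dense torus orbit must agree), so the local isomorphisms patch into a global $T$-equivariant isomorphism $X_v^u \xrightarrow{\sim} X_w^x$.

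\emph{Main obstacle.} The hard part is Step 2: upgrading the matching of $T$-weighted tangent data to a scheme-theoretic local isomorphism. What is really needed is a clean local normal form for the Richardson variety near a fixed point, expressed purely in terms of the Bruhat interval combinatorics and the $T$-weights, so that the model transports automatically along a root subsystem embedding. Controlling the ``transverse'' directions (roots outside $\Phi'$) is where the subtlety lies; the length equality $\ell(v)-\ell(u) = \ell(w)-\ell(x)$ is essential here, as it rules out any transverse tangent directions appearing in the Richardson at any fixed point, so that the essential local geometry on each side is built from the same root-subsystem data.
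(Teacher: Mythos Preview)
The paper you are working from is a survey; it merely \emph{cites} this result as \cite[Thm.~3.1]{woo.2010} and does not supply a proof of its own. So there is no ``paper's own proof'' to compare against here. That said, let me comment on your proposal and on how Woo's actual argument runs.

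Your Step~1 is fine and is essentially a restatement of what the interval pattern embedding hypothesis buys combinatorially. The genuine gap is exactly where you flag it: Step~2. Matching $T$-weight multisets of tangent spaces at each fixed point does \emph{not} determine a variety up to isomorphism, even $T$-equivariantly; one needs control of the full local scheme structure, not just first-order data. Your appeal to ``$T$-rigidity'' (two $T$-equivariant morphisms agreeing on weights and on a dense orbit must agree) is not a theorem in the generality you need, and in any case you never produce a single morphism between the two Richardson varieties to which such a uniqueness statement could be applied. As written, Step~2 is a wish, not an argument.

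Woo's proof takes a different and much cleaner route, already foreshadowed in this survey by the Billey--Braden theorem quoted just above: a root subsystem $\Phi' \subset \Phi$ determines a subtorus $S \subset T$, and the $S$-fixed locus $(G/B)^S$ is a disjoint union of copies of $G'/B'$. The length equality built into the interval pattern embedding forces the entire Richardson variety $X_w^x$ to sit inside a single such component, where it is identified with $X_v^u$. This produces a global isomorphism in one stroke, with no gluing and no need to analyze local equations chart by chart. If you want to salvage your approach, the missing ingredient is precisely this global geometric realization of the pattern map via torus fixed points; once you have it, Steps~1 and~2 become unnecessary.
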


Corollary~\ref{cor:BB} also gives rise to filtrations on
permutations.  

\begin{cor}
  For each $m$, $KL_{m}=\{w \in S_{\infty } \ | \ P_{id,w}(1)\leq m
  \}$ is closed under taking patterns.
\end{cor}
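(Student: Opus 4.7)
The plan is to deduce this corollary as a direct application of Corollary~\ref{cor:BB} to the case $x = \mathrm{id}$. Suppose $w \in KL_m \cap S_n$ and let $v \in S_k$ be a pattern of $w$, appearing at positions $i_1 < i_2 < \cdots < i_k$. I would choose the subspace $U = \mathrm{span}\{e_{i_j} - e_{i_l} : 1 \leq j < l \leq k\}$ of $V$. Then $\Phi^U = \Phi \cap U$ is a root subsystem of type $A_{k-1}$, and $W^U$ is the parabolic subgroup of $S_n$ permuting the positions $\{i_1,\ldots,i_k\}$, which is abstractly isomorphic to $S_k$. By the construction of $\fl_U$ via half-spaces given after Theorem~\ref{coset theorem}, $\fl_U(w)$ is precisely the element of $W^U$ whose action on $\{i_1,\ldots,i_k\}$ has the same inversion pattern as $w$ on those positions, i.e.\ the element corresponding to the classical pattern $v$ under the isomorphism $W^U \cong S_k$.

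Next I would apply Corollary~\ref{cor:BB} with $x = \mathrm{id}$. Since $\mathrm{id} \in W^U$ and the map $\fl_U$ restricts to the identity on $W^U$ by Theorem~\ref{coset theorem}, we have $\fl_U(\mathrm{id}) = \mathrm{id}$. Thus Corollary~\ref{cor:BB} yields
\[
P_{\mathrm{id},w}(1) \;\geq\; P^U_{\mathrm{id},\,\fl_U(w)}(1).
\]
Because Kazhdan–Lusztig polynomials are an intrinsic invariant of an abstract Coxeter system (they can be computed purely from the Hecke algebra relations determined by the Coxeter matrix), the isomorphism $W^U \cong S_k$ carrying $\fl_U(w)$ to $v$ also sends $P^U_{\mathrm{id},\fl_U(w)}$ to $P_{\mathrm{id},v}$ computed in $S_k$. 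Combining these gives $P_{\mathrm{id},v}(1) \leq P_{\mathrm{id},w}(1) \leq m$, hence $v \in KL_m$.

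The one step requiring care, and the only real content beyond citing Corollary~\ref{cor:BB}, is verifying that for the specific subspace $U$ above the element $\fl_U(w)$ really does correspond to the classical flattening $v = \fl(w_{i_1}\cdots w_{i_k})$. This is where I would lean on the explicit description of $\fl$ in terms of inversion sets: $\fl_U(w)$ is the unique $y \in W^U$ with $y\Phi_+^U = w\Phi_+ \cap U$, and for $U$ as chosen this set records exactly the pairs $(i_j,i_l)$ on which $w$ has an inversion. Transporting through the isomorphism $W^U \cong S_k$ this is precisely the inversion set of $v$, and since permutations are determined by their inversion sets the two agree. Everything else is formal, so I would expect this step — identifying $\fl_U$ with classical pattern extraction — to be the only place where any checking is needed, and it is essentially a translation of definitions.
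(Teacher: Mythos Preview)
Your overall strategy is exactly the paper's: apply Corollary~\ref{cor:BB} with $x=\mathrm{id}$, so that $P_{\mathrm{id},w}(1)\geq P^U_{\mathrm{id},\fl_U(w)}(1)$, and then identify $\fl_U(w)$ with the classical pattern. The paper states the corollary immediately after noting that Corollary~\ref{cor:BB} ``gives rise to filtrations on permutations,'' so no more is intended.

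However, the one step you flagged as needing care is actually wrong as written. With your position-based subspace $U=\mathrm{span}\{e_{i_j}-e_{i_l}\}$, the set $w\Phi_+\cap U$ does \emph{not} record the inversions of $w$ among the positions $i_1,\ldots,i_k$. Concretely, $w\Phi_+=\{e_{w(i)}-e_{w(j)}:i<j\}$, and intersecting with $U$ forces $w(i),w(j)\in\{i_1,\ldots,i_k\}$, i.e.\ $i,j\in w^{-1}\{i_1,\ldots,i_k\}$; so what you pick up is the pattern at positions $w^{-1}\{i_1,\ldots,i_k\}$, not at $\{i_1,\ldots,i_k\}$. For example, with $w=3142$ and positions $\{1,2,4\}$ your $\fl_U(w)$ corresponds to $132$, whereas the pattern at those positions is $312$.

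The fix is the one the paper itself indicates just before the corollary: take $U$ spanned by the roots indexed by the \emph{values} $\{w_{i_1},\ldots,w_{i_k}\}$. Then $w\Phi_+\cap U$ picks out exactly the pairs $i<j$ with $i,j\in\{i_1,\ldots,i_k\}$ and $w(i)>w(j)$, and $\fl_U(w)$ transports to $v=\fl(w_{i_1}\cdots w_{i_k})$ under the natural isomorphism $W^U\cong S_k$. With that correction your argument goes through and matches the paper. (Alternatively, your position-based choice still proves the corollary, since $P\mapsto w^{-1}(P)$ is a bijection on $k$-subsets and hence you range over all patterns anyway; but the justification you wrote for that step is not correct.)
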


\bigskip

It is interesting to ask for a geometrical explanation for why
(rational) smoothness of Schubert varieties can be characterized by
Coxeter patterns.  The following theorem proves one direction of this.
The other direction is still open: namely, why are patterns from
stellar Coxeter graphs enough.   

\begin{thm}
\emph{(}Billey-Braden \cite{BiBr}\emph{)}
If $X_{\fl (w)}^{U}$ is singular, then $X_{w}$ is singular.   
\end{thm}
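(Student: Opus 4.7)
The plan is to prove the contrapositive: if $X_w$ is smooth (resp.\ rationally smooth), then so is $X^U_{\fl(w)}$ in $G^U/B^U$. I will handle both versions since the theorem is framed in terms of both smoothness and rational smoothness, using related but distinct techniques.

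For the rational-smoothness version, I would use Corollary~\ref{cor:BB} directly. Suppose $X_w$ is rationally smooth, so by Property~1 we have $P_{v,w}(t)=1$ for every $v\leq w$. Let $y\leq^U \fl(w)$ in $W^U$. The key preliminary step is that $y\leq w$ in the ambient $W$-Bruhat order: this follows from Dyer's theorem on reflection subgroups (the inclusion $W^U\hookrightarrow W$ is order-preserving, so $y\leq^U\fl(w)$ gives $y\leq \fl(w)$ in $W$), combined with $\fl(w)\leq w$, which one extracts from the defining identity $\fl(w)\,\Phi^U_+ = w\Phi_+\cap U$ and the inversion-set characterization of Bruhat order. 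Then $P_{y,w}(1)=1$ and Corollary~\ref{cor:BB} yields
\[
1 \;=\; P_{y,w}(1)\;\geq\; P^U_{\fl(y),\fl(w)}(1)\;=\;P^U_{y,\fl(w)}(1),
\]
since $\fl$ restricts to the identity on $W^U$. Nonnegativity of the coefficients together with $P^U_{y,\fl(w)}(0)=1$ forces $P^U_{y,\fl(w)}(1)=1$. Ranging over all $y\leq^U\fl(w)$ shows $X^U_{\fl(w)}$ is rationally smooth.

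For the smoothness version, I would use the tangent space formula (Theorem~\ref{geom of Schubert 150}, in its general-Weyl-group form). Suppose $X_w$ is smooth; fix $y\leq^U\fl(w)$. As above $y\leq w$ in $W$, so $y$ is a $T$-fixed point of $X_w$. Partition $\Phi_+=\Phi^U_+\sqcup(\Phi_+\setminus U)$ and decompose
\[
\dim T_y X_w \;=\; A+B, \qquad A=\#\{\alpha\in\Phi^U_+: y\,r_\alpha\leq w\},\quad B=\#\{\alpha\in\Phi_+\setminus U: y\,r_\alpha\leq w\}.
\]
Smoothness of $X_w$ at $y$ gives $A+B=\ell(w)$. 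For every $\alpha\in\Phi^U_+$ with $y\,r_\alpha\leq^U\fl(w)$, Dyer's theorem and $\fl(w)\leq w$ give $y\,r_\alpha\leq w$, so $A\geq \dim T_y X^U_{\fl(w)}\geq \ell^U(\fl(w))$ (tangent dimension always dominates the variety dimension). On the other side, the identity $\fl(w)\Phi^U_+ = w\Phi_+\cap U$ implies
\[
\ell^U(\fl(w))\;=\;|\Phi_+\cap U\cap w\Phi_-|,
\]
so $\ell(w)-\ell^U(\fl(w))$ equals the number of inversions of $w$ lying outside $U$. Each such inversion contributes a distinct reflection $r_\alpha$ with $\alpha\in\Phi_+\setminus U$ and $r_\alpha\leq w$; using equivariance of $\fl$ to shift the base point from the identity to $y\in W^U$, I would argue $B\geq \ell(w)-\ell^U(\fl(w))$. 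Summing, both inequalities must be equalities, so $\dim T_y X^U_{\fl(w)}=A=\ell^U(\fl(w))$, giving smoothness at $y$.

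The main obstacle is the verification of $\fl(w)\leq w$ in the ambient $W$-order and the Dyer-type order-preservation statement for general parabolic subgroups $W^U$ — both are crucial for the tangent-space and Kazhdan--Lusztig comparisons to make sense. In the smoothness (non-rational) argument, the additional subtle point is the inequality $B\geq \ell(w)-\ell^U(\fl(w))$ at a general base point $y\in W^U$, which requires turning inversions of $w$ outside $U$ into reflections $r_\alpha$ with $y\,r_\alpha\leq w$ in a length-compatible way; this is where the equivariance property (a) of Theorem~\ref{coset theorem} and the characterization of $\fl$ via the half-space $\Pi_{H_w}$ do the real work.
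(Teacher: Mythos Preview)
Your approach is genuinely different from the paper's, but it has a real gap: the claim $\fl(w)\le w$ in the ambient Bruhat order is false in general for non-standard parabolic subgroups, and both of your arguments lean on it. A concrete counterexample is $W=S_4$ with $W^U=\langle t_{14},t_{23}\rangle$ (this is parabolic, conjugate to $\langle s_1,s_3\rangle$ by $x=1423$) and $w=4132$. The minimal element of the coset $W^U\cdot w$ is $1423$, so $\fl(4132)=t_{14}t_{23}=4321$, which has length $6>4=\ell(4132)$; hence $\fl(w)\not\le w$. Your stated justification (``the inversion-set characterization of Bruhat order'') cannot work, because containment of inversion sets characterizes \emph{weak} order, not Bruhat order. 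In this same example, $y=4321\le^U\fl(w)$ is not even a $T$-fixed point of $X_{4132}$, so your tangent-space comparison at $y$ does not get off the ground, and the inequality $B\ge \ell(w)-\ell^U(\fl(w))$ you flag as ``subtle'' has no evident proof once the order relation fails.

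The rational-smoothness half is not what the theorem asserts, but for the record it can be salvaged without the false claim: simply apply Corollary~\ref{cor:BB} with $x=\mathrm{id}$ to get $1=P_{\mathrm{id},w}(1)\ge P^U_{\mathrm{id},\fl(w)}(1)$, and invoke the Carrell--Peterson criterion. For smoothness, however, the paper's proof uses a completely different mechanism that sidesteps all of these Bruhat-order comparisons: one realizes $G^U/B^U$ inside $G/B$ as the fixed-point locus of a subtorus $T'\subset T$, identifies $X_w^{T'}$ with $X^U_{\fl(w)}$, and then applies the Fogarty--Norman theorem that the fixed-point scheme of a torus acting on a smooth scheme is smooth. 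That geometric input is what makes the smoothness transfer work; a purely combinatorial tangent-space count along your lines would need a substitute for $\fl(w)\le w$, and none is available in general.
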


\noindent \textit{Outline of proof.} 
\begin{itemize}
\item Realize $G^{U}/B^{U}$ as the fixed points of a certain torus action.

\item Use a theorem of Fogarty-Norman saying that for all smooth algebraic
$T$-schemes $X$ the fixed point scheme $X^{T}$ is smooth.
\end{itemize}
\qed

\bigskip

Several other nice pattern avoidance properties in Coxeter groups are
also known:
\begin{enumerate}
\item (Stembridge \cite{Stem3}) The fully commutative elements in types $B$ and $D$ are characterized with signed patterns.

\item (R.Green \cite{green4}) The fully commutative elements in the
  affine Weyl group of type $A$ are exactly the 321-avoiding
  elements.  

\item (Reading \cite{Reading}) Coxeter-sortable elements are characterized and it is shown that they are equinumerous with clusters and with noncrossing partitions.

\item (Billey-Jones \cite{Billey-Jones}) Deodhar elements for all Weyl groups are characterized.

\item (Billey-Crites \cite{Billey-Crites}) The rationally smooth
  Schubert varieties in the affine type A flag manifold are
  characterized as 3412, 4231 avoiding plus one extra family of
  twisted spiral varieties.  Crites also studied the enumeration of
  affine permutations indexing  rationally smooth Schubert varieties in
  \cite{Crites1}.  

\item (Chen-Crites-Kuttler, manuscript) A Schubert variety $X_{w}$ of affine type A is smooth if and only if $w \in \widetilde{S}_{n}$ avoids $3412$ and $4231$.  Furthermore, the tangent space to $X_{w}$ at the identity can be described in terms of reflection over real and imaginary roots.

\item (Matthew Dyer, manuscript) Smooth and rationally smooth
  {S}chubert varieties can be detected using rank 2 subvarieties \cite{dyer.rank.2}. 

\item (Matthew Samuel, manuscript) Affine Schubert varieties for all types can be characterized by patterns using a new version of pattern avoidance for Coxeter groups based on reflection groups.
\end{enumerate}

\section{Computer tools for Schubert geometry}
In the lecture series that gave rise to this article, we discussed some
computer tools for the study of geometry of Schubert varieties and for
more general topics in mathematics.  The video file of the lecture
devoted to the contents of this section is available at the following
website.
\[
\textrm{http://mathsoc.jp/en/videos/2012msj-si.html}
\]
The main ideas presented are pertaining to computer proofs, the Online
Encyclopedia of Integer Sequences, the Database of Permutation Pattern
Avoidance and Sage.  The demos in the lecture are best seen online so
we will not include that discussion here.

We do want to highlight one of the Sage demos discussed, because it is
related to some recent developments on marked mesh patterns which
unify the descriptions of several pattern avoidance properties for
permutations using the language of \textit{marked mesh patterns}.

\begin{defn}
  (Br\"anden and Claesson \cite{BrCl}) A \textit{mesh pattern} is a
  permutation matrix with shaded regions between certain entries.
\end{defn}
\vspace{-0.1in}
\[
\unitlength 0.1in
\begin{picture}( 32.0000, 11.8000)( 13.3000,-18.5000)
%
\special{pn 8}%
\special{pa 2200 840}%
\special{pa 3040 840}%
\special{fp}%
%
\special{pn 8}%
\special{pa 2200 980}%
\special{pa 3040 980}%
\special{fp}%
%
\special{pn 8}%
\special{pa 2200 1120}%
\special{pa 3040 1120}%
\special{fp}%
%
\special{pn 8}%
\special{pa 2200 1260}%
\special{pa 3040 1260}%
\special{fp}%
%
\special{pn 8}%
\special{pa 2200 1400}%
\special{pa 3040 1400}%
\special{fp}%
%
\special{pn 8}%
\special{pa 2340 1540}%
\special{pa 2340 700}%
\special{fp}%
%
\special{pn 8}%
\special{pa 2480 1540}%
\special{pa 2480 700}%
\special{fp}%
%
\special{pn 8}%
\special{pa 2620 1540}%
\special{pa 2620 700}%
\special{fp}%
%
\special{pn 8}%
\special{pa 2760 1540}%
\special{pa 2760 700}%
\special{fp}%
%
\special{pn 8}%
\special{pa 2900 1540}%
\special{pa 2900 700}%
\special{fp}%
%
\special{pn 8}%
\special{sh 1.000}%
\special{ar 2340 980 30 30  0.0000000 6.2831853}%
%
\special{pn 8}%
\special{sh 1.000}%
\special{ar 2480 1400 30 30  0.0000000 6.2831853}%
%
\special{pn 8}%
\special{sh 1.000}%
\special{ar 2620 840 30 30  0.0000000 6.2831853}%
%
\special{pn 8}%
\special{sh 1.000}%
\special{ar 2760 1120 30 30  0.0000000 6.2831853}%
%
\special{pn 8}%
\special{sh 1.000}%
\special{ar 2900 1260 30 30  0.0000000 6.2831853}%
%
\special{pn 8}%
\special{pa 2340 980}%
\special{pa 2760 980}%
\special{pa 2760 1120}%
\special{pa 2340 1120}%
\special{pa 2340 980}%
\special{ip}%
%
\special{pn 4}%
\special{pa 2648 980}%
\special{pa 2508 1120}%
\special{fp}%
\special{pa 2606 980}%
\special{pa 2466 1120}%
\special{fp}%
\special{pa 2564 980}%
\special{pa 2424 1120}%
\special{fp}%
\special{pa 2522 980}%
\special{pa 2382 1120}%
\special{fp}%
\special{pa 2480 980}%
\special{pa 2348 1114}%
\special{fp}%
\special{pa 2438 980}%
\special{pa 2340 1078}%
\special{fp}%
\special{pa 2396 980}%
\special{pa 2340 1036}%
\special{fp}%
\special{pa 2690 980}%
\special{pa 2550 1120}%
\special{fp}%
\special{pa 2732 980}%
\special{pa 2592 1120}%
\special{fp}%
\special{pa 2760 994}%
\special{pa 2634 1120}%
\special{fp}%
\special{pa 2760 1036}%
\special{pa 2676 1120}%
\special{fp}%
\special{pa 2760 1078}%
\special{pa 2718 1120}%
\special{fp}%
\put(14.0000,-18.0000){\makebox(0,0)[lb]{The dots represents 1's in the permutation matrix.}}%
%
\special{pn 8}%
\special{pa 1330 670}%
\special{pa 4530 670}%
\special{pa 4530 1850}%
\special{pa 1330 1850}%
\special{pa 1330 670}%
\special{ip}%
\end{picture}%
\]

\begin{defn}
(\'Ulfarsson \cite{Ulfarssonmesh})
A \textit{marked mesh pattern} is a mesh pattern with numbers in the shaded regions.
\end{defn}

The next theorem states that we can also use marked mesh patterns for characterizing Schubert varieties. See \cite{Ulfarssonmesh} for details.

\begin{thm}
\emph{(}\'Ulfarsson \cite{Ulfarssonmesh}\emph{)} 
The smooth, Gorenstein,
factorial, defined by inclusions, and 321-hexagon avoiding
permutations can be described by marked mesh patterns.
\end{thm}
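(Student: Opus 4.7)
The plan is to unpack each of the five families in turn and exhibit an explicit finite list of marked mesh patterns whose avoidance is equivalent to membership in that family. Since three of the five families (smooth, defined by inclusions, 321-hexagon avoiding) are already characterized by ordinary classical pattern avoidance from Pattern Avoidance Properties~1, 5, and~6, and since an ordinary classical pattern is the special case of a (marked) mesh pattern with no shaded regions, these three translations are immediate: one simply records the patterns $\{3412, 4231\}$, $\{4231, 35142, 42513, 351624\}$, and $\{321, 56781234, 56718234, 46781235, 46718235\}$ as marked mesh patterns with empty shading and no region markings.

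The remaining two families are the ones where the shading and the region labels do real work. For the factorial family, Pattern Avoidance Property~3 says $w$ is factorial iff $w$ avoids $4231$ and the \emph{vincular} pattern $3\underline{41}2$, where the $4$ and $1$ must be adjacent in one-line notation. The adjacency requirement translates directly into a marked mesh pattern: take the permutation matrix for $3412$, and shade the vertical strip lying strictly between the columns carrying the $4$ and the $1$, marking that strip with the label $0$ to require that it contain no entries of $w$. Together with the unshaded $4231$ pattern, this captures factoriality.

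For the Gorenstein family (Pattern Avoidance Property~4) the translation proceeds in two steps. The Bruhat restrictions $\{t_{15}, t_{23}\}$ on $35142$ and $\{t_{15}, t_{34}\}$ on $42513$ are conditions that certain entries of $w$ not lie in specified rectangular regions of the matrix (so that the chosen embedded subword actually realizes the claimed Bruhat relation). Each such forbidden rectangle is recorded as a shaded region marked with $0$, converting the Bruhat-restricted classical patterns of Woo--Yong into two marked mesh patterns. The second condition in Property~4, that for every descent $d$ the partition $\lambda_d(w)$ has all inner corners on a single antidiagonal, is the step I expect to be the genuine obstacle: it is a condition on the shape of the Rothe diagram rather than on a bounded window of positions. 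The approach is to observe that a violation of this condition produces a specific local configuration in the diagram, namely two inner corners of $\lambda_d(w)$ on different antidiagonals, which in turn forces the presence of a finitely bounded sub-configuration of $1$'s in $w$ surrounded by regions that must be free of other $1$'s. Encoding the ``must be empty'' regions as shaded squares marked $0$, and allowing Úlfarsson's more general region markings (``$\geq 1$'' or ``$\geq k$'') on the regions that must contain dots to realize the two distinct inner corners, produces a finite list of marked mesh patterns whose avoidance is equivalent to the descent-partition condition.

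The final step is to verify that for each family, the conjunction of ``avoiding this finite list of marked mesh patterns'' is logically equivalent to the classical characterization cited from Properties~1, 3, 4, 5, 6. In each case this amounts to checking that the shading/markings correctly encode the auxiliary conditions (adjacency, Bruhat restrictions, or diagram constraints) attached to the underlying classical patterns, which is a direct unwinding of definitions. The main obstacle, as noted, is the Gorenstein descent-partition condition; once its translation into a finite list of marked mesh patterns is secured, the remainder of the proof is bookkeeping.
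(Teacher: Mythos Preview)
The paper does not prove this theorem. It is a survey, and the result is simply attributed to \'Ulfarsson with a citation; no argument is given. So there is no ``paper's own proof'' to compare your proposal against.

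As for your plan on its own merits: the strategy is correct for four of the five families. Classical patterns are indeed the special case of marked mesh patterns with no shading, so the smooth, defined-by-inclusions, and 321-hexagon cases are immediate; and the vincular adjacency in $3\underline{41}2$ translates exactly as you describe into a shaded column marked~$0$, handling the factorial case. The Gorenstein case, however, is not completed in your proposal, and you say so yourself. Your sketch asserts that a violation of the inner-corner condition ``forces the presence of a finitely bounded sub-configuration,'' but you do not exhibit the configuration or the resulting marked mesh patterns, and this is precisely the nontrivial content of \'Ulfarsson's theorem for that family. The inner-corner condition on $\lambda_d(w)$ is a global shape constraint, and turning it into a \emph{finite} list of marked mesh patterns requires a genuine argument, not just the observation that some local obstruction ought to exist. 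To close this gap you would need to either reproduce \'Ulfarsson's explicit construction or supply your own; as written, the proposal is a correct outline with the hardest step left as an exercise.
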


\section{Open Problems}

In addition to the open problems we have mentioned in the text, there
are some more open problems concerning pattern avoidance
properties. We hope that computer experiments will help the reader to
study those problems.

\begin{ques} (Woo-Yong) Characterize the Gorenstein, LCI and factorial locus of $X_w$ using patterns. 
\end{ques}

\begin{ques} (From \'Ulfarsson) Is there a nice generating function to count the number of Gorenstein/LCI permutations or Schubert varieties defined by inclusions, etc.  
\end{ques}

\begin{ques} Find a geometric explanation why a finite number of patterns suffice in all cases above.  
\end{ques}

\begin{ques} What nice properties does the inversion arrangement have for other pattern avoiding families?
\end{ques}

\begin{ques}  $KL_{m}$ is closed under taking patterns by \cite{BiBr}.  Can it always be
  described by a finite set of patterns?  Conjectured to be yes by Billey-Weed-Woo.
\end{ques}

\begin{ques} Conjecture (Woo): The Schubert varieties with
  multiplicity $\leq 2$ can be characterized by pattern avoidance.
  Can this be extended to a pattern avoidance characterization of
  Schubert varieties with multiplicity $\leq k$?  Note, Woo-Yong
  showed that $\mathrm{mult}_x(X_y) = \mathrm{mult}_u(X_v)$ if $[u,v]$
  interval pattern embeds into $[x,y]$ \cite[Cor. 6.15]{WooYong}.
\end{ques}

\begin{ques} What other filtrations on the set of all permutations can be characterized by (generalized) patterns?
\end{ques}

\begin{ques} Describe the maximal singular locus of a Schubert variety
  for other semisimple Lie groups using Coxeter patterns.   
\end{ques}

\begin{ques} Give a pattern based algorithm to produce the factorial and/or
Gorenstein locus of a Schubert variety in other types.
\end{ques}

\begin{ques} Is there a nice generating function to count the number of
smooth, factorial and/or Gorenstein permutations in other types?
\end{ques}


\begin{ques} What is the right notion of patterns for GKM spaces?
\end{ques}

\begin{ques} Say $X_{w}$ is \textit{combinatorially smooth} if
  $\ell(w) = \#\{t_{ij}\colon t_{ij}\leq w\}$.  In $S_n$
  combinatorially smooth is equivalent to smooth by the
  Lakshmibai-Seshadri Theorem.  However, for other Weyl groups this is
  a weaker notion than rational smoothness.    Characterize the
  combinatorially smooth Schubert varieties by generalized pattern
  avoidance.
\end{ques}

\begin{ques} Can Lakshmibai's characterization of the tangent space basis for
$B$, $C$ and $D$ be translated into signed patterns or a signed variation on marked mesh patterns.
\end{ques}

\begin{ques}  What is the analog of marked mesh patterns for other types?   
\end{ques}

\begin{ques} What is the M\"obius function for the poset of pattern
containment on $S_{\infty}$?  See the excellent survey by Einar
Steingrimmson \cite[Sect. 5]{steingrimsson.2013} for more details on
this and other pattern related problems.
\end{ques}

\begin{ques}
  Which of the many pattern avoidance related theorems on Schubert
  varieties have analogs for other interesting families of varieties
  such as the $GL_p\times GL_q$-orbit closures of the flag manifold or
  the Peterson varieties as mentioned at the end of
  Section~\ref{s:properties}?  See
  \cite{Brion99,Hultman2010,Insko.Yong,mcgovern.2009,mcgovern.trapa} for further details.
\end{ques}

\begin{ques} More generally, what other types of theorems have
  canonical representations which might lead to more computer database
  tools? For example, how about hypergeometric series, integer
  sequences, patterns?  See \cite{billey-tenner} for more discussion
  on this topic.
\end{ques}

\subsection*{Acknowledgments}
We offer our immense gratitude to those who helped in preparation of
the lecture series and this article including Michael Albert, Andrew
Crites, Jack Lee, Monty McGovern, Julia Pevtsova, Brendan Pawlowski,
Ed Richmond, Sudeshna Sen, William Slofstra, Joshua Swanson, Henning
\'Ulfarsson, Alex Woo, Alex Yong, and all of the organizers of the MSJ
summer program Megumi Harada, Takeshi Ikeda, Shizuo Kaji, Toshiaki
Maeno, Mikiya Masuda, Hiroshi Naruse, Toru Ohmoto, Norio Iwase.  We
also want to thank the students in Osaka City University who helped
organize the conference, the audience who attended the lectures at the
conference and the readers of this article. Kokoroyori Kannsha
Itashimasu.  Finally, we greatly appreciate the corrections and
comments of an anonymous referee.




\def\cprime{$'$}

\end{document}